\newtheorem{lemma}{Lemma}
\newtheorem{remark}{Remark}
\newtheorem{proposition}[lemma]{Proposition}
\newtheorem{theorem}[lemma]{Theorem}
\newtheorem{corollary}[lemma]{Corollary}
\newcommand{\VAR}{\operatorname{Var}}
\newcommand{\VARs}{\VAR_s \hspace{-2pt}}
\newcommand{\COS}{\cos^+ \hspace{-1pt}}
\newcommand{\LIP}{\mathrm{Lip}_0}
\newcommand{\IND}{\mathbf{1}}
\newcommand{\sg}{\mathrm{sg}}
\newcommand{\ac}{\mathrm{ac}}
\newcommand{\KS}{\mathrm{KS}}
\newcommand{\cT}{\mathcal{T}}
\newcommand{\cE}{\mathcal{E}}
\newcommand{\cF}{\mathcal{F}}
\newcommand{\Poi}{\mathrm{Poi}}
\newcommand{\Bin}{\mathrm{Bin}}
\newcommand{\UGW}{\mathrm{UGW}}
\newcommand{\GW}{\mathrm{GW}}
\newcommand{\dE}{\mathbb{E}}
\newcommand{\dP}{\mathbb{P}}
\newcommand{\dN}{\mathbb{N}}
\newcommand{\dR}{\mathbb{R}}
\newcommand{\dC}{\mathbb{C}}
\newcommand{\dH}{\mathbb{H}}
\newcommand{\dEs}{\mathbb{E}_{s}}
\newcommand{\g}[1]{\gamma \left( #1 \right) }
\newcommand{\G}[1]{\gamma^{p} \hspace{-2pt}\left( #1 \right) }
\newcommand{\veps}{\varepsilon}
\title{Existence of absolutely continuous spectrum for Galton-Watson random trees} 
\author{Adam Arras \and Charles Bordenave}
\date{}
\begin{document}

\maketitle

\begin{abstract}
We establish a quantitative criterion for an operator defined on a Galton-Watson random tree for having an absolutely continuous spectrum. For the adjacency operator, this criterion requires that the offspring distribution has a relative variance below a threshold. As a by-product, we prove that  the adjacency operator of  a supercritical Poisson Galton-Watson tree has a non-trivial absolutely continuous part if the average degree is large enough. We also prove that its Karp and Sipser core has purely absolutely spectrum on an interval   if the average degree is large enough. We finally illustrate our criterion on the Anderson model on a $d$-regular infinite tree with $d\geq 3$ and give a quantitative version of Klein's Theorem on the existence of absolutely continuous spectrum at  disorder smaller that $C \sqrt d$ for some absolute constant $C$.
\end{abstract}

\section{Introduction}

In this paper, we study operators on an infinite random tree. We establish a quantitative criterion for the existence of an absolutely continuous spectrum which relies on the  variance of the offspring distribution.

\paragraph{Galton-Watson trees.}

The simplest ensemble of random trees are the Galton-Watson trees. Let $P$ be a probability distribution on the integers $\dN = \{0,1,\ldots\}$. We define $\GW(P)$ as the law of the tree $\cT$ with root $o$ obtained from a Galton-Watson branching process where the offspring distribution has law $P$ (the root has a number of offspring $N_o$ with distribution $P$ and, given $N_o$, each neighbour of the root, has an independent number of offspring with distribution $P$, and so on). For example, if $P$ is a Dirac mass at integer $d\geq 2$, then $\cT$ is an infinite $d$-ary tree.

An important current motivation for studying random rooted trees is that they appear as the natural limiting objects for the Benjamini-Schramm topology of many finite graphs sequences, see for example \cite{zbMATH06653786,coursSRG}. The limiting random rooted graphs that can be limiting points for this topology must satisfy a stationarity property called unimodularity. The law $\GW(P)$ is generically not unimodular. To obtain a unimodular random rooted tree, it suffices to biais the law of the root vertex. More precisely, if $P_\star$ is a probability distribution on $\dN$ with finite positive first moment $d_\star = \sum_{k} k P_\star(k)$, we define $\UGW(P_\star)$ as the law of the tree $\cT$ with root $o$ obtained from a Galton-Watson branching process where the root has offspring distribution $P_\star$ and all other vertices have offspring distribution $P = \widehat P_\star$ defined for all integer $k\geq 0$ by 
\begin{equation}\label{eq:defhatP}
P(k) = \frac{(k+1) P_\star (k+1)}{d_\star}.
\end{equation}
Such a random rooted tree is also called a {\em unimodular Galton-Watson tree with degree distribution $P_\star$} by distinction with the {\em Galton-Watson tree with offspring distribution $P$} defined above. For example, if $P_\star$ is a Dirac mass at $d_\star$ then $\cT$ is an infinite $d_\star$-regular tree. If $P_\star = \Poi(d)$ is a Poisson distribution with mean $d$, then $\widehat P_\star = P_\star$ and the laws $\UGW(P_\star)$ and $\GW(P_\star)$ coincide. This random rooted tree is the Benjamini-Schramm limit of the Erd\H{o}s-R\'enyi random graph with $n$ vertices and connection probability $d/n$ in the regime $d$ fixed and $n$ goes to infinity. More generally, uniform random graphs whose given degree sequence converge under mild assumptions to $\UGW(P_\star)$ where $P_\star$ is the limiting degree distribution, see  for example  \cite{zbMATH06653786}.

The {\em percolation problem} on such random trees is their extinction/survival probabilities, defined as the probability that the tree $\cT$ is finite/infinite. We recall that for $\GW(P)$, with $P$ different to a Dirac mass at $1$, the survival probability is positive if and only if $d = \sum_k k P(k) >1$. Obviously, for $\UGW(P_\star)$, the survival probability is positive if and only the survival probability of $\GW(P)$ is positive with $P = \widehat P_\star$ as in \eqref{eq:defhatP}.

\paragraph{Adjacency operator.} Let $\cT$ be an infinite rooted tree which is locally finite, that is for any vertex $u \in \cT$, the set of its neighbours denoted by $\mathcal{N}_u$ is finite. For $\psi \in \ell^2(\cT)$, with finite support, we set
$$(A\psi)(u) =  \sum_{v\in \mathcal{N}_u} \psi(v), \quad u \in \cT.$$
This defines an operator $A$ on a dense domain of $\ell^2(\mathcal T)$.  Under mild conditions on $\cT$, the operator $A$ is essentially self-adjoint, see for example \cite[Proposition 3]{MR2789584}.  For example, if $\cT$ has law $\GW(P)$ or $\UGW(P)$ with $P$ having a finite first moment, then $A$ is essentially self-adjoint with probability one, see   \cite[Proposition 7]{MR2789584} and \cite[Proposition 2.2]{coursSRG} respectively. We may then consider the spectral decomposition of $A$ and define, for any $v \in \cT$,
$\mu_v  $ the {\em spectral measure at vector $\delta_v$} which is a probability measure on $\dR$ characterized for example through its Cauchy-Stieltjes transform: for all $z \in \dH= \{ z \in \dC : \Im(z) >0\}$, 
$$
\int_{\dR} \frac{\mathrm{d} \mu_v(\lambda)}{\lambda - z} = \langle \delta_v , (A - z) ^{-1}  \delta_v \rangle.
$$
If $\cT$ is random and $o$ is the root of the tree, the {\em average spectral measure or density of states} is the deterministic probability measure $\bar \mu = \dE \mu_o$.  The probability measure $\bar \mu$ has been extensively studied notably when $\cT$ is a Galton-Watson random tree: the atoms are studied in \cite{MR2789584,SALEZ2015249,zbMATH07174134}, the existence of a continuous part in \cite{zbMATH06821385} and the existence of an absolutely continuous part at $E = 0$ in \cite{CS18}, see \cite{coursSRG} for an introduction on the topic. We mention that for $\GW(P)$ or $\UGW(P_\star)$,  $\bar \mu$ has a dense atomic part as soon as the tree has leaves (that is $P(0) > 0$).

\paragraph{Quantum percolation.}  Much less is known however on the random probability measure $\mu_o$. Its decomposition into absolutely continuous and singular parts
$$
\mu_o = \mu^{\ac}_o + \mu^{\sg}_o
$$
 is however of fundamental importance to understand the nature of the eigenwaves of $A$, notably in view of studying the eigenvectors of finite graph sequences which converge to $\cT$, see notably \cite{zbMATH06111055,zbMATH06176899,MR3411739,zbMATH07068259,zbMATH07097490}. The {\em quantum percolation problem} is to determine whether  $\mu_o^{\ac}$ is trivial or not, it finds its origin 
in the works of De Gennes, Lafore and Millot \cite{doi:10.1142/9789812564849_0001,DEGENNES1959105}. The quantum percolation is a refinement of the classical percolation in the sense that a necessary condition for the non-triviality of $\mu_o^{\ac}$ is that the tree $\cT$ is infinite. We refer to \cite{MS10,coursSRG} for further references on quantum percolation.

In the present paper, we establish a general concentration criterion on the offspring distribution $P$ of a Galton-Watson tree which guarantees the existence of an absolutely continuous part. Before stating our main results in the next section, we illustrate some of its consequences on $\UGW(\Poi(d))$ and $\UGW(\Bin(n,d/n))$ where for integer $n \geq 1$ and $0 \leq d \leq n$, $\Bin(n,d/n)$ is the Binomial distribution with parameters $n$ and $d/n$. The random tree is $\UGW(\Bin(n,d/n))$ is particularly interesting because this is the random tree obtain after deleting independently each edge of the $n$-regular infinite tree with probability $d/n$. In the same vein, $\UGW(\Poi(d))$ is the Benjamini-Schramm limit of the bond-percolation with parameter $d/n$ on the hypercube $\{0,1\}^n$. As pointed above, in both cases $\bar \mu$ has a dense atomic part, so even if $\mu_o^{\ac}$ is non-trivial, $\mu_o$ will contain an atomic part with positive probability.

\begin{theorem}\label{thm:POI}
Assume that $\cT$ has law $\UGW(\Poi(d))$ or $\UGW(\Bin(n,d/n))$ with $1 \leq d \leq n$. For any $\veps >0$, there exists $d_0= d_0 (\veps) >1 $ such that if $ d \geq d_0$ then, conditioned on non-extinction,  with probability one, $\mu^{\mathrm{ac}}_o$ is non-trivial and  $ \dE [ \mu^{\mathrm{ac}}_o(\dR) ] \geq  1-  \veps$.
\end{theorem}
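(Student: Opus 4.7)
The plan is to derive Theorem~\ref{thm:POI} from the general quantitative criterion announced in the abstract, namely that sufficient smallness of the relative variance of the offspring distribution of a Galton--Watson tree yields a lower bound on $\dE[\mu_o^{\ac}(\dR)]$. The first step is therefore to compute the relevant offspring statistics for each model. For $P_\star = \Poi(d)$, the identity $\widehat{\Poi(d)} = \Poi(d)$ already recalled in the introduction yields $P = \Poi(d)$, with mean and variance both equal to $d$. For $P_\star = \Bin(n,d/n)$, a direct computation from \eqref{eq:defhatP} gives $\widehat{\Bin(n,d/n)} = \Bin(n-1,d/n)$, with mean $(n-1)d/n$ and variance $(n-1)(d/n)(1-d/n) \leq d$. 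In both cases the variance-to-mean-squared ratio is at most of order $1/d$ and becomes arbitrarily small as $d \to \infty$, which places both ensembles inside the regime where the general criterion applies once $d$ exceeds the corresponding threshold $d_0(\veps)$.

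The second step is to apply the criterion to obtain the quantitative bound $\dE[\mu_o^{\ac}(\dR)] \geq 1 - \veps$. Since the root of a $\UGW(P_\star)$ tree has offspring distribution $P_\star$ rather than $P = \widehat{P}_\star$, a small transfer step is needed. This is done through the Schur complement identity
\[
\langle \delta_o,(A-z)^{-1}\delta_o\rangle = - \frac{1}{z + \sum_{v \in \cN_o} m_v^{(o)}(z)},
\]
where $m_v^{(o)}(z)$ is the root resolvent of the $\GW(P)$ subtree dangling from each child $v$ of $o$. Positivity of $\Im m_o(E + i0^+)$ on a set of positive Lebesgue measure is inherited from the positivity of $\Im m_v^{(o)}(E+i0^+)$ for any single child $v$, so the ac-mass bound obtained from the criterion for $\GW(P)$ trees transfers without loss to the $\UGW(P_\star)$ setting. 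Choosing $d_0(\veps)$ large enough to drive the loss term of the criterion below $\veps$ then completes the quantitative estimate.

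The last step is to upgrade the expectation bound to the almost-sure non-triviality of $\mu_o^{\ac}$ on the non-extinction event. I would proceed via a Galton--Watson zero-one argument: since the event $E = \{\mu_o^{\ac}(\dR) > 0\}$ is inherited from any single surviving child subtree through the Schur recursion above, the complementary probability $q = \dP(E^c)$ for a $\GW(P)$-tree satisfies $q \leq G_P(q)$, where $G_P$ is the probability generating function of $P$. In the supercritical regime this inequality admits only the two solutions $q = p_{\mathrm{ext}}$ and $q = 1$; the bound $\dE[\mu_o^{\ac}(\dR)] \geq 1 - \veps > 0$ rules out $q=1$, so $q = p_{\mathrm{ext}}$ and $E$ holds almost surely conditional on non-extinction. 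The main obstacle I anticipate is this inheritance step, namely ensuring that the Schur recursion preserves Lebesgue-positivity of $\Im m_v^{(o)}(E + i0^+)$ when passing from a child subtree up to the root; this will require a sufficiently robust formulation of the general criterion, most likely phrased so that it controls not only the total ac-mass but also a uniform positive lower bound on $\Im m$ at almost every energy in a macroscopic interval.
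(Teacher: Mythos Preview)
Your proposal has a genuine gap at the very first step. The quantitative criterion you want to invoke---Theorem~\ref{thm:main} and its Corollary~\ref{cor:main}---carries the hypothesis $\dP(N\geq 1)=1$: the control parameter $\alpha_p(\lambda)$ in \eqref{eq:defalphal} contains the factor $(N/d+d/N)^p$, which equals $+\infty$ on the event $\{N=0\}$. For $P=\Poi(d)$ or $P=\Bin(n-1,d/n)$ we have $\dP(N=0)>0$, so $\alpha_p(\lambda)=\infty$ for every $\lambda$ and the criterion is simply not applicable to these trees directly. The small relative variance you compute is necessary but far from sufficient; the presence of leaves forces a dense atomic part in $\bar\mu$ (as recalled in the introduction), and this has to be dealt with before any resolvent bound of the type in Theorem~\ref{thm:main} can hold.

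The paper's route is to first pass to the infinite skeleton $\mathcal S$ via Lemma~\ref{lem:RDEleaves}: conditioned on survival, the recursion \eqref{eq:RDEz} is rewritten on $\mathcal S$ with an offspring variable $N^s\geq 1$ and a random potential $v(z)\in\dH$ coming from the finite pending subtrees. The hard work is then Lemma~\ref{lem:leaves}, which shows that $\dE|v(z)|^m$ is small uniformly over $z$ in $\dH_B$ for a large deterministic set $B$ (one must stay away from the eigenvalues of all finite trees, whence $\ell(B^c)\leq\veps_0$). Only after this reduction can Corollary~\ref{cor:main} be invoked, on the skeleton, and the conclusion then follows as in the proof of Theorem~\ref{thm:Anderson}. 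Your Schur-complement transfer from $\GW(P)$ to $\UGW(P_\star)$ and your zero-one argument are reasonable ideas in isolation, but they rest on a criterion that does not fire here; the paper in fact bypasses the zero-one argument altogether by bounding $\dE^s\int_{B\cap(-E,E)}f(\lambda)^{-2}\,\mathrm d\lambda<\infty$ directly, which forces $f>0$ almost everywhere on a deterministic set of positive Lebesgue measure.
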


In the proof of Theorem \ref{thm:POI}, we will exhibit a deterministic Borel set $B$ of Lebesgue measure proportional to $\sqrt d$ such that, conditioned on non-extinction, with probability one, $\mu^{\mathrm{ac}}_o$ has positive density almost everywhere on $B$.  This theorem is consistent with the prediction in Harris \cite{PhysRevB.29.2519} and Evangelou and Economou \cite{PhysRevLett.68.361}. It should be compared to \cite{MR3411739} which proved a similar statement for $\UGW(\Bin(n,p))$ with $n$ fixed and $p$ close to $1$, that is a random tree which is close to the $n$-regular infinite tree. To our knowledge, the present result establishes for the first time the presence of a non-trivial absolutely continuous part in a random tree which is not close to a deterministic tree. We note also that Theorem \ref{thm:POI} implies as a corollary that the average spectral measure $\bar \mu = \dE \mu_o$ has an absolutely continuous part of arbitrary large mass if $d$ is large enough, this result was not previously known.

As pointed above, if $\cT$ has law $\UGW(\Poi(d))$ or $\UGW(\Bin(n,d/n))$ then, due to leaves in $\cT$, $\mu^{\mathrm{ac}}_o$ has atoms.  It was however suggested in Bauer and Golinelli \cite{BG01a,BG01} that the {\em Karp and Sipser core} of $\cT$ should carry the absolutely continuous part of $\mu_o$. This core is the random subforest of $\cT$ obtained by iteratively repeating the following procedure introduced in \cite{KS}: pick a leaf of $\cT$ and remove it together with its unique neighbour, see \cite{MR2789584} for a formal definition. The core, if any, is the infinite connected component that remains. For our purposes, it suffices to recall that for $d > d_{\KS}$, the core is non-empty with probability one and empty otherwise (for $\Poi(d)$, we have $d_{\KS} = e$). Moreover, for $d > d_{\KS}$, conditioned on the root being in the core, the connected component of the root in the core has law $\UGW(Q)$ where for $\Poi(d)$, $Q$ is $\Poi(m(d))$ conditioned on being at least $2$ while for $\Bin(n,d/n)$, $Q$ is $\Bin(n,m_n(d)/n)$ conditioned on being at least $2$. We have for $d > d_{\KS}$, $m(d), m_n(d) > 0$ and $m(d)\sim m_n(d) \sim d$ as $d$ grows, see  \cite{MR1637403,Zdeborov__2006} or Lemma 16 in v1 arxiv version of \cite{MR2789584}.

The following theorem supports the predictions of Bauer and Golinelli.

\begin{theorem}\label{thm:POIcore}
Assume that $\cT$ has law $\UGW(Q_d)$ where $Q_d$ is either $\Poi(d)$ or $\Bin(n,d/n)$ conditioned on being at least $2$ with $0 < d \leq n$ and $n \geq 2$. The conclusion of Theorem \ref{thm:POI} holds for $Q_d$. Moreover, for any $0 < E < 2$, there exists $d_1 = d_1(E) >0$ such that if $d \geq d_1$ then, with probability one, on the interval $(-E \sqrt d , E \sqrt d)$, $\mu_o$ is absolutely continuous with  almost-everywhere positive density. 
\end{theorem}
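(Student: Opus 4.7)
The plan is to apply the general concentration criterion established earlier in the paper to the offspring distribution $Q_d$, and, for the sharper second assertion, to combine it with a perturbative analysis around the explicit solution for the $d$-regular tree.

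For the first claim, I would compute the mean and variance of $Q_d$: writing $p_d = \dP[\Poi(d) \leq 1]$ in the Poisson case (or the analogous binomial probability, exponentially small in $d$), one has $\dE[Q_d] = d(1+o(1))$ and $\VAR[Q_d] = d(1+o(1))$, so that the relative variance $\VAR[Q_d]/\dE[Q_d]^2 = O(1/d)$ tends to zero as $d \to \infty$. This is below any prescribed threshold of the concentration criterion once $d$ is large enough, so the conclusions of Theorem~\ref{thm:POI} -- almost sure non-triviality of $\mu_o^{\ac}$ and $\dE[\mu_o^{\ac}(\dR)] \geq 1 - \veps$ -- carry over verbatim.

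For the second claim, the key structural observation is that $Q_d$ is supported on $\{k \geq 2\}$, so the size-biased law $\widehat Q_d$ of the non-root offspring satisfies $\widehat Q_d(0) = 0$; hence every non-root vertex of $\cT$ has at least one offspring, the tree has no leaves, and no atom of $\mu_o$ can arise from finite subtrees. I would then analyze the distributional fixed-point equation for the off-diagonal Green's function
\begin{equation*}
\Gamma_v(z) = -\Bigl( z + \sum_{w \in \cN_v^+} \Gamma_w(z) \Bigr)^{-1},
\end{equation*}
where $\cN_v^+$ denotes the offspring of $v$. On the $d$-regular tree this equation admits the explicit Herglotz solution $\Gamma^{\mathrm{reg}}(E + i 0^+) = \bigl(-E + i\sqrt{4(d-1) - E^2}\bigr)/(2(d-1))$, whose imaginary part is bounded below by $c(\tilde E)/\sqrt d$ uniformly on $|E| \leq \tilde E \sqrt d$ for any fixed $\tilde E < 2$. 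Since $\widehat Q_d$ concentrates around $d(1+o(1))$ with vanishing relative variance, the fixed-point equation in $\UGW(Q_d)$ is a small perturbation of the regular one; I would apply the paper's concentration criterion pointwise in $E$ to show that $\Im \Gamma_v(E + i \eta)$ stays within a constant factor of $\Im \Gamma^{\mathrm{reg}}(E)$ with probability $1 - o_d(1)$, uniformly in $\eta > 0$. Propagating this lower bound to the root (using $N_o \geq 2$) and combining a Fubini argument over $E \in (-\tilde E\sqrt d, \tilde E\sqrt d)$ with standard Herglotz boundary-value theory then yields, almost surely, positive and finite boundary values $\Im G_o(E + i 0^+)$ for Lebesgue-a.e.\ $E$ in the interval, which translates into pure absolute continuity of $\mu_o$ there with a.e.-positive density.

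The main obstacle is the propagation step: the map from $(\Gamma_w)_{w \in \cN_v^+}$ to $\Gamma_v$ is nonlinear, and a priori random fluctuations at deep levels could amplify upward and destroy the lower bound on $\Im \Gamma$. The criterion of the paper is designed precisely to damp such fluctuations by exploiting the effective averaging across the many independent children guaranteed by the large mean of $\widehat Q_d$; this is where the smallness of the relative variance enters in a crucial quantitative way.
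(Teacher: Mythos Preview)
Your proposal follows essentially the same route as the paper: verify the concentration hypothesis for $\widehat Q_d$, apply the criterion, and pass to spectral-measure statements via boundary-value theory. Two places where the paper is more direct than your sketch deserve mention. First, the criterion (Corollary~\ref{cor:main}, applied with $v=v_\star=0$ and $p=2$ to $H = A/\sqrt{\dE N}$) outputs uniform $L^p$ moment bounds $\dE|g_o(z)-\Gamma_\star(z)|^2 \leq C$ and $\dE(\Im g_o(z))^{-2} \leq C$ on $\dH_E$, not high-probability statements; the almost-sure conclusions then follow from Lemma~\ref{lem:specmeas2} (Simon's $L^p$ criterion for pure absolute continuity) and a Fatou argument on $\dE\int(\Im g_o)^{-2}$ for the positive density, exactly as in steps~1--3 of the proof of Theorem~\ref{thm:Anderson}. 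In particular no separate ``propagation to the root'' is needed, since Corollary~\ref{cor:main} already controls $g_o$ directly. Second, the hypothesis to verify is not merely small relative variance but smallness of $\alpha_2(\lambda)$ and $\beta_2(\lambda)$, which involve moments of both $N/d$ \emph{and} $d/N$ (and likewise for $N_\star$); the paper checks these via Bennett's inequality, showing that $\dE(N/d)^m$, $\dE(d/N)^m$, $\dE(N_\star/d)^m$, $\dE(d/N_\star)^m$ all tend to $1$ uniformly in $n \geq d$, which is slightly more than the variance computation you outline.
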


This theorem should be compared with Keller \cite{MR2994759} which proves among other results a similar statement for $\Bin(n,p)$ conditioned on being at least $2$ with $n$ fixed and $p$ close to $1$, see also Remark (c) there. Theorem \ref{thm:POIcore} proves  that the core of $\UGW(\Poi(d))$ has purely absolutely continuous spectrum on a spectral interval of size proportional to $\sqrt d$ provided that $d$ is large enough. We expect that this result is close to optimal: almost surely, the spectrum of $\UGW(\Poi(d))$ is $\dR$ and the upper part of the spectrum should be purely atomic due to the presence of vertices of arbitrarily  
large degrees.

Let us also mention that for $\Poi(d)$ we expect that  $d_1(0+) = 0$ in accordance to \cite{BG01a,BG01, CS18}. For the optimal value of $d_0(0+)$ in Theorem \ref{thm:POI}  for $\Poi(d)$, $d_0(0+) = e\approx 2.72$ seems plausible in view \cite{BG01a,BG01, CS18} but older references \cite{PhysRevB.29.2519,PhysRevLett.68.361} predict that $d_0(0+) \approx 1.42$.

\paragraph{Anderson model.} The focus of this paper is on random trees, we nevertheless conclude this introduction with an application of our main result to the Anderson model on the infinite $d$-regular tree, say $\cT_d$. Let $A$ be the adjacency operator of $\cT_d$. Let $(V_u)_{u \in \cT_d}$ be independent and identically distributed random variables. We consider the essentially self-adjoint operator on $\ell^2(\cT_d)$,
$$
H =   A + \lambda V,
$$
where $\lambda \geq 0$ and $V$ is the multiplication operator $(V\psi) (u) = V_u \psi(u)$ for all $u \in \cT_d$. 
This famous model was introduced by Anderson \cite{PhysRev.109.1492} to describe the motion of
an electron in a crystal with impurities. The study of such operators on the infinite regular tree was initiated in the influential works \cite{ACTA,ACT}. We refer to the monograph \cite{zbMATH06532891} for further background and references.

As above, we denote  by $\mu_o$ the spectral measure at the vector $\delta_o$, where $o$ is the root of $\cT_d$. We prove the following statement: 

\begin{theorem}\label{thm:Anderson}
Assume that $V_o$ has finite fourth moment. For any $\veps >0$ there exists $\lambda_0  = \lambda_0(\veps) > 0$ such that for all $0 \leq \lambda \leq \lambda_0 \sqrt d$ and $d\geq 3$, $\dE [ \mu^{\ac}_o (\dR) ]\geq 1 -\veps$. Moreover, for any $0 < E < 2$, there exists $\lambda_1 = \lambda_1(E) >0$  such that for all $0 \le \lambda \leq \lambda_1 \sqrt d$  and $d\geq 3$, with probability one, on the interval $(-E \sqrt{ d-1} , E \sqrt{d-1})$, $\mu_o$ is absolutely continuous with  almost-everywhere positive density. 
\end{theorem}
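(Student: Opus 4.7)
The plan is to derive Theorem \ref{thm:Anderson} from the quantitative criterion underlying Theorems \ref{thm:POI} and \ref{thm:POIcore}, applied to $\cT_d = \UGW(\delta_d)$ together with the i.i.d.\ diagonal perturbation $\lambda V$. The convenient feature of the regular case is that the offspring distribution $\delta_{d-1}$ has zero variance, so the criterion's entire ``variance budget'' is available to accommodate fluctuations generated by the potential. The starting point is the Schur complement recursion: if $g(z) = \langle \delta_v, (H_v^{(u)} - z)^{-1} \delta_v \rangle$ is the Green function on the subtree rooted at $v$ with parent $u$ deleted, then
\begin{equation*}
g(z) \stackrel{d}{=} \frac{1}{\lambda V - z - \sum_{i=1}^{d-1} g_i(z)},
\end{equation*}
with $V \stackrel{d}{=} V_o$ and $g_1,\dots,g_{d-1}$ i.i.d.\ copies of $g(z)$, all mutually independent, while $G_o(z) = \bigl(\lambda V_o - z - \sum_{i=1}^d g_i(z)\bigr)^{-1}$.

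The next step is to pass to the natural scale of the regular-tree spectrum by setting $\hat g(\hat z) = \sqrt{d-1}\, g(\hat z \sqrt{d-1})$ with $\hat z = z/\sqrt{d-1}$; the recursion then reads
\begin{equation*}
\hat g(\hat z) \stackrel{d}{=} -\Bigl(\hat z - \lambda V/\sqrt{d-1} + (d-1)^{-1}\sum_{i=1}^{d-1} \hat g_i(\hat z)\Bigr)^{-1},
\end{equation*}
whose deterministic fixed point at $\lambda = 0$ is the Cauchy transform $\hat g_\infty$ of the semicircle distribution on $[-2,2]$, with $\Im \hat g_\infty(\hat E + i0) = \tfrac12\sqrt{4-\hat E^2}$ uniformly positive on any $[-E,E]$ with $E < 2$. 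In the rescaled recursion, the potential contributes an i.i.d.\ shift of order $\lambda/\sqrt{d-1}$ while the empirical mean $(d-1)^{-1}\sum \hat g_i$ concentrates around $\dE \hat g$ with fluctuations of order $(d-1)^{-1/2}$. Since the offspring variance vanishes, the quantitative criterion reduces to requiring that the effective disorder $\lambda^2 \dE[V_o^2]/(d-1)$ lie below a threshold $c(\veps)$, i.e.\ $\lambda \leq \lambda_0(\veps)\sqrt d$, which yields $\dE[\mu^{\ac}_o(\dR)] \geq 1-\veps$. For the purely absolutely continuous conclusion on $(-E\sqrt{d-1},E\sqrt{d-1})$, I would invoke the stronger variant of the criterion underlying Theorem \ref{thm:POIcore}: on $[-E,E]$ one has $\Im \hat g_\infty \geq \tfrac12\sqrt{4-E^2} > 0$, and for $\lambda \leq \lambda_1(E)\sqrt d$ the criterion forces $\Im \hat g(\hat E + i\eta)$ to stay uniformly bounded away from $0$ as $\eta \downarrow 0$, ruling out a singular part of $\mu_o$ on the interval and producing almost-everywhere positive density.

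The main obstacle is that the paper's quantitative criterion is stated for the pure adjacency operator on a unimodular Galton-Watson tree, whereas here we deal with $A + \lambda V$. One therefore has to revisit the proof of the main criterion and verify that all of its estimates survive after the substitution $-z \mapsto \lambda V - z$ in the denominator of the distributional recursion. The finite fourth moment hypothesis on $V_o$ is used precisely here to control the higher-order moment contributions produced by this additional i.i.d.\ shift at scale $\lambda/\sqrt d$, so that the concentration estimates underlying the criterion remain valid.
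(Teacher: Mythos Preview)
Your high-level strategy is the paper's strategy: rescale by $\sqrt{d-1}$, observe that the offspring distribution $\delta_{d-1}$ is deterministic so the only randomness in the recursion comes from the potential term of size $\lambda/\sqrt{d-1}$, and then apply the paper's quantitative criterion. However, your ``main obstacle'' is not an obstacle at all, and this misconception leaves the remainder of the argument unjustified.

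The operator in \eqref{eq:defH} is $H = A/\sqrt{d} - V$, and the recursive distribution equations \eqref{RDEoz}--\eqref{eq:RDEz} as well as Theorem~\ref{thm:main} and Corollary~\ref{cor:main} are already stated for an arbitrary pair $(N,v)$. The Anderson model on $\cT_d$ is literally the case $N_\star=d$, $N=d-1$, $v\stackrel{d}{=} v_\star \stackrel{d}{=} -\lambda V_o/\sqrt{d-1}$; nothing in the proof of the criterion needs to be revisited. The fourth-moment hypothesis is used exactly because the control parameter $\alpha_2(\lambda)$ in \eqref{eq:defalphal} contains the factor $(1+|v|)^4$: one computes $\alpha_2(t)\le C\bigl(\dP(|V_o|>t\sqrt{d-1}/\lambda)+\lambda^4\dE|V_o|^4/(d-1)^2\bigr)$, which is $O(\lambda^4/d^2)$ by Markov's inequality, so $\lambda\le\lambda_1\sqrt{d}$ suffices to make $\alpha_2$ and $\beta_2$ as small as one likes.

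Two further points where your sketch is imprecise. First, the criterion does not make $\Im \hat g(\hat E+i\eta)$ ``uniformly bounded away from $0$''; it yields the moment bound $\dE[(\Im g_o(z))^{-2}]\le C$ uniformly on $\dH_E$, and positivity of the density follows by Fatou's lemma applied to $\int_{-E}^{E}\dE[1/f(\lambda)^2]\,\mathrm{d}\lambda$. Second, ruling out the singular part on $(-E,E)$ is not automatic from positivity of the density: the paper invokes the Klein--Simon criterion (Lemma~\ref{lem:specmeas2}), namely $\liminf_{\eta\downarrow 0}\int_{-E}^{E}\dE[|g_o(\lambda+i\eta)|^2]\,\mathrm{d}\lambda<\infty$, which follows from the first bound in Corollary~\ref{cor:main}. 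The mass estimate $\dE[\mu_o^{\ac}(\dR)]\ge 1-\veps$ then comes from comparing densities via Lemma~\ref{lem:specmeas1} and choosing $E$ close to $2$ so that $\mu_\star((-E,E))\ge 1-\veps/2$. You should make these three bridging steps explicit rather than gesture at a ``stronger variant of the criterion''.
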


For fixed $d \geq 3$, this theorem is contained in Klein \cite{klein1998extended}. Other proofs of this important result have been proposed \cite{zbMATH05064642,froese2007absolutely}. Beyond the proof, the novelty of Theorem \ref{thm:Anderson} is the uniformity of the thresholds $\lambda_0,\lambda_1$ in $d \geq 3$. When $V_o$ has a continuous density, the asymptotic for large $d$ was settled in Bapst \cite{zbMATH06358474} using the complementary criteria for pure point / absolutely continuous spectra established in \cite{zbMATH00487169,zbMATH06180736}.

\paragraph{Acknowledgments} We thank Nalini Anantharaman and Mostafa Sabri for their explanations on their work \cite{zbMATH07097490}.  CB is supported by French ANR grant ANR-16-CE40-0024-01.

\section{Main results}
\label{sec:main}

\paragraph{Model definition.} Let $P_\star$ be a probability distribution on $\dN$ with finite and positive second moment. We denote by $P$ the size-biased version of $P_\star$ defined by \eqref{eq:defhatP}. We denote by $N_\star$ and $N$ random variables with laws $P_\star$ and $P$ respectively. We set 
$$
d_\star = \dE N_\star \quad \hbox{ and } \quad d = \dE N = \frac{\dE N_\star (N_\star - 1)}{\dE N_\star}.
$$
We use Neveu notation for indexing the vertices of a rooted tree. Let $\mathcal V$ be the set of finite sequence of positive integers $u = (i_1,\ldots,i_k)$, the empty sequence being denoted by $o$. We denote by $\mathcal V_k$ the sequences of length $k$. To each element of $\mathcal V$, we associate an independent random variable $N_u$ where $N_o$ has law $P_\star$ and $N_u$, $u \ne o$, has law $P$. We define the offsprings of $u \in \mathcal V_k$ as being the vertices $(u,1),\ldots, (u,N_u) \in \mathcal V_{k+1}$ (by convention $(o,i) = i$). The connecting component of the root $o$ in this genealogy defines a rooted tree $\cT$. Its law is $\UGW(P_\star)$. More generally, for $u \in \mathcal V$, we denote by $\cT_u$ the trees spanned by the descendants of $u$ (so that $\cT_o = \cT$). If $u \ne o$, $\cT_u$ has law $\GW(P)$ (to be precise up to the natural isomorphism which maps $u$ to $o$). By construction, the degree of a vertex $u \in \cT$ is $\deg(u) = \IND_{u \ne o} + N_u $.

Let $(X_u)_{u \in \mathcal V}$ be independent and uniform random variables in some measured space. Then, we consider real random variables $V_u = f(X_u,N_u,\IND_{u \ne o})$ for some measurable function $f$. Now, we introduce the operator defined on compactly supported vectors of $\ell^2 (\cT)$,
\begin{equation}\label{eq:defH}
H = \frac{A}{\sqrt{d}} - V,
\end{equation}
 where $A$ is the adjacency operator and $V$ the multiplication operator $V\psi(u) = V_u \psi(u)$.  As pointed above, from \cite[Proposition 7]{MR2789584}, with probability one, the operators $A$ and $H$ are essentially self-adjoint on a dense domain of $\ell^2(\cT)$.

 For example, if for all $u \in \mathcal V$, $V_u = 0$ then $H$ is the adjacency matrix normalized by $1/\sqrt d$. If $V_u = (\deg(u)-d) / \sqrt d $ then $- H$ is the Laplacian operator on $\cT$ shifted by $d$ and normalized. If $V_u = f(X_u)$ for some measurable function $f$ then we obtain the Anderson Hamiltonian operator on $\cT$. These three examples are relevant for our study.

\paragraph{Resolvent recursive equation.}  We denote by $\mu_o$ the spectral measure at the root vector $\delta_o$.  For  $z\in \dH = \{z \in \dC :  \Im z > 0\}$,  the resolvent operator $ (H-z)^{-1}$ is well-defined. The resolvent is the favorite tool to access the spectral measures on tree-like structures. We set
\begin{equation}\label{eq:defgo}
g_o(z)= \langle \delta_o , (H-z)^{-1} \delta_o \rangle = \int_{\dR} \frac{\mathrm{d}\mu_o(\lambda)}{\lambda-z}.\end{equation}
For $u \in \cT$, we denote by $H_u$ the restriction of $H$ to $\ell^2 (\cT_u)$ (in particular, $H_o = H$). If $g_u(z)= \langle \delta_u, (H_u - z)^{-1} \delta_u \rangle$, then we have the following recursion:
\begin{equation}\label{recursion}
    g_u(z) = - \left(z+\frac{1}{d}\sum_{i=1}^{N_u} g_{ui}(z)+V_u\right)^{-1},
\end{equation}
see for example \cite[Proposition 16.1]{zbMATH06532891}.
By construction, all $g_u$ with $u \ne o$, have the same law, let $g(z)$ be a random variable with law $g_u(z)$, $u \ne o$. For any $ z\in \dH$, we deduce the equations in distribution:
\begin{equation}
\label{RDEoz}g_o (z)\stackrel{d}{=} - \left( z + \frac 1 d \sum_{i=1}^{N_\star} g_i(z)  + v_\star\right)^{-1},
\end{equation}
and 
\begin{equation}\label{eq:RDEz}
g (z) \stackrel{d}{=}  - \left( z + \frac 1 d \sum_{i=1}^{N} g_i (z) + v\right)^{-1},
\end{equation}
where $(N_\star,v_\star)$ has law $(N_o,V_o)$, $(N,v)$ has law $(N_u,V_u)$, $u \ne o$, and $(g_i)_{i \geq 1}$ are independent iid copies of $g$.

The {\em semicircle distribution} is defined as $$\mu_{sc} = \frac{1}{2\pi} \mathbf{1}_{\{|x|\leq 2\}}\sqrt{4-x^2}\mathrm{d}x.$$ Its Cauchy-Stieltjes transform, $\Gamma(z)$, satisfies, for all $z \in \dH$,
\begin{equation}\label{screcursion}
  \Gamma(z)= -\left(z+\Gamma(z)\right)^{-1}.
\end{equation}
Similarly, we define the {\em modified Kesten-McKay distribution} with ratio $\rho = d_*/d$ as  
$$
\mu_{\star} =  \frac{2 \rho}{\pi} \mathbf{1}_{\{|x|\leq 2\}}\frac{\sqrt{4-x^2}}{ 2\rho^2 ( 4 - x^2) +  x^2 ( 2 - \rho)^2 }\mathrm{d}x,
$$
(if $d_* = d+1$, we retrieve the usual Kesten-McKay distribution). Its Cauchy-Stieltjes transform is related to $\Gamma$ through the identity for all $z \in \dH$,
\begin{equation}\label{screcursiono}
\Gamma_{\hspace{-2pt}\star} (z) = -\left(z+ \rho  \Gamma(z)\right)^{-1}.
\end{equation}
In the statements below, we give a probabilistic upper bound on $|g(z)-\Gamma(z)|$ and $|g_o(z)-\Gamma_\star(z)|$ uniformly in $z$ in the strip $$\dH_E = \{ z \in \dH : \Re z \in  (-E,E),\, \Im z \leq 1 \},$$ for $0 < E < 2$ provided that $v$ is small and the ratios $N/d$ and $N_\star/d_\star$ are close enough to $1$ in probability.

\paragraph{Trees with no leaves.} For a given $z \in \dH$, let $g(z)$ be a random variable in $\dH$ satisfying the equation in distribution \eqref{eq:RDEz}.  We allow $v$ to take value in $\dH$ for a reason which will be explained below. Then, given a real number $p \geq 2$ and a parameter $\lambda \in (0,1)$, we denote by $\mathcal{E}_\lambda$ the event
\begin{equation}\label{eq:defEl}
\mathcal{E}_\lambda = \left\{|v| < \lambda,\, N \geq 2,\, \frac{N}{d}\in (1-\lambda, 1+\lambda) \right\},
\end{equation}
and $\alpha_p(\lambda)$ the control parameter
\begin{equation}\label{eq:defalphal}
\alpha_p(\lambda)= \dE \left[\IND_{\cE_\lambda^c} \left(1+|v|\right)^{2p}\left(\frac{N}{d}+\frac{d}{N}\right)^{p}\right].
\end{equation}
Note that $\alpha_p(\lambda)$ is infinite if $\dE N^{p} = \infty$ or $\dE |v|^{2p} = \infty$. Also, H\"older inequality implies for example 
\begin{equation}\label{eq:holder}
\alpha_2(\lambda) \leq \left( \dP ( \cE_\lambda^c ) \right)^{1/2} \left( \dE \left[ \left(1+|v|\right)^{12} \right] \right)^{1/3}  \left( \dE \left[ \left(\frac{N}{d}+\frac{d}{N}\right)^{12} \right] \right)^{1/6}.
\end{equation}
The following result asserts that if $\lambda + \alpha_p(\lambda)$ is small enough and $N \geq 1$ with probability one then $g(z)$ and $\Gamma(z)$ are close in $L^{p}$.

\begin{theorem}\label{thm:main} Let $0 <E <2$, $p\geq2$ and $(N,v)\in \dN \times \dH$ be pair of random variables with $\dE N = d$. Assume $\dP ( N \geq 1) = 1$ and either $v$ real or $\dE N^{2p} < \infty$. There exist $\delta, C >0$ depending only on $(E,p)$ such that for any $g(z) \in \dH$ verifying \eqref{eq:RDEz}, if $\lambda + \alpha_p(\lambda) \leq \delta$ 
then for all $z \in \dH_E$,
$$\dE \left[ |g(z)-\Gamma(z)|^{p} \right] \leq C\sqrt{\lambda + \alpha_{p}(\lambda)},$$
$$\dE \left[ (\Im g(z))^{-p} \right]\leq C .$$
\end{theorem}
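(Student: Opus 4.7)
The plan is to linearize the RDE \eqref{eq:RDEz} around the deterministic solution $\Gamma(z)$ of \eqref{screcursion}. Writing $S(z)=\tfrac{1}{d}\sum_{i=1}^N g_i(z)$, the resolvent identity applied to these two equations yields the pointwise identity
\begin{equation*}
g(z) - \Gamma(z) \;=\; g(z)\,\Gamma(z)\,\bigl(S(z) + v - \Gamma(z)\bigr).
\end{equation*}
From this I would derive a self-bounding inequality of the form $\dE|g-\Gamma|^p \le \theta\,\dE|g-\Gamma|^p + C(\lambda+\alpha_p(\lambda))$ with $\theta<1$ (the square root in the statement is a convenient weaker rewriting). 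Two ingredients will be needed: an a priori $L^{2p}$ bound on $|g|$, equivalent to the second conclusion $\dE(\Im g)^{-p}\le C$; and a Marcinkiewicz--Zygmund-type variance reduction for $S-\Gamma$ that exploits $N\ge 2$ on $\cE_\lambda$.

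\textbf{A priori resolvent bound.} I would treat the estimate $\dE(\Im g(z))^{-p}\le C$ first. Two facts about $\Gamma$ on $\dH_E$ are used repeatedly: $\Im\Gamma(z)\ge c(E)>0$ uniformly, and $|\Gamma(z)|\le 1$ (with equality only on the real boundary). Starting from the trivial bound $\Im g\ge\Im z>0$ that makes all integrals finite, I would bootstrap via the RDE: $-1/g = z+S+v$ gives $(\Im g)^{-1}=|z+S+v|^2/(\Im z+\Im S+\Im v)$. On $\cE_\lambda$, where $|v|<\lambda$ and $N/d$ is close to $1$, the denominator is comparable to $\Im z+\Im\Gamma\ge c(E)$ up to a correction controlled by $\dE|g-\Gamma|^p$, while on $\cE_\lambda^c$ the moment $\alpha_p(\lambda)$ together with Hölder \eqref{eq:holder} absorbs the contribution. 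Iterating this relation, using $\lambda+\alpha_p(\lambda)$ small, yields the uniform bound; combined with $|g|\le|z+S+v|^{-1}$ it also supplies $\dE|g|^{2p}\le C$.

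\textbf{Closing the contraction; main obstacle.} For the first inequality, set $\gamma=\dE g(z)$ and decompose
\begin{equation*}
S-\Gamma \;=\; \frac{1}{d}\sum_{i=1}^N(g_i-\gamma)\;+\;(\gamma-\Gamma)\;+\;\Bigl(\frac{N}{d}-1\Bigr)\gamma.
\end{equation*}
Conditional on $N$, the first summand is a centred i.i.d.\ sum and Marcinkiewicz--Zygmund yields an $L^p$ bound of order $(\dE N^{p/2}/d^p)\,\dE|g-\gamma|^p$; on $\cE_\lambda$ (where $N\ge 2$) this produces a contraction factor at most $C_p\cdot 2^{-p/2}$ against $\dE|g-\Gamma|^p$, while on $\cE_\lambda^c$ it is absorbed into $\alpha_p(\lambda)$ by Hölder. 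The third summand is of size $\lambda|\gamma|$ on $\cE_\lambda$, and $\gamma-\Gamma=\dE[g\,\Gamma(S+v-\Gamma)]$, obtained by taking expectation in the linearization identity, is of lower order. Inserting everything back, using the $L^{2p}$ bound on $|g|$ from the previous step together with $|\Gamma|\le 1$, closes the contraction. The chief obstacle is that $|\Gamma(z)|$ is essentially $1$ near the real axis (indeed $|\Gamma(x+i0)|=1$ for $|x|<2$), so no gain can come from the prefactor $|\Gamma|$ itself. The entire contraction must come from averaging over the $N\ge 2$ offspring resolvents inside $S$, and the delicate balance between this variance-reduction factor, the potentially large $L^{2p}$ norm of $|g|$ near the spectral edge, and the contribution of $\cE_\lambda^c$ controlled by $\alpha_p(\lambda)$ constitutes the technical heart of the argument.
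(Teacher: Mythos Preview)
Your linearization idea does not close, and the gap is exactly where you flag the ``chief obstacle'' but then wave it away. From $g-\Gamma=g\,\Gamma\,(S+v-\Gamma)$ you get
\[
|g-\Gamma|^p=|g|^p|\Gamma|^p|S+v-\Gamma|^p,
\]
and $g=-(z+S+v)^{-1}$ is a function of $S$. You propose to use the $L^{2p}$ bound on $|g|$ ``from the previous step'' together with $|\Gamma|\le 1$; concretely that means H\"older,
\[
\dE|g-\Gamma|^p\le\bigl(\dE|g|^{2p}\bigr)^{1/2}\bigl(\dE|S+v-\Gamma|^{2p}\bigr)^{1/2}.
\]
Two problems. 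First, the a~priori bound $\dE|g|^{2p}\le C$ is the very conclusion $\dE(\Im g)^{-p}\le C$ you are after; your bootstrap for it asserts that on $\cE_\lambda$ the denominator $\Im z+\Im S+\Im v$ is ``comparable to $\Im z+\Im\Gamma$ up to a correction controlled by $\dE|g-\Gamma|^p$'', but $\Im S$ can be arbitrarily small on a set of positive probability and an $L^p$ bound on $S-\Gamma$ gives no control on $\dE(\Im S)^{-p}$. So the two estimates are genuinely circular. Second, even granting $\dE|g|^{2p}\le C$, H\"older leaves you with $(\dE|S+v-\Gamma|^{2p})^{1/2}$ on the right, and Marcinkiewicz--Zygmund at exponent $2p$ then produces a term $\bigl(C_{2p}2^{-p}\dE|g-\gamma|^{2p}\bigr)^{1/2}$. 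You now compare $\dE|g-\Gamma|^p$ on the left with $(\dE|g-\Gamma|^{2p})^{1/2}$ on the right; Jensen goes the wrong way, and no self-bounding inequality results. The factor $|g|^p$ cannot be decoupled from $|S-\Gamma|^p$ by a H\"older step without this loss, and since $|\Gamma|^2=1$ on the boundary there is no slack to absorb it.

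The paper circumvents this entirely by working with the hyperbolic quantity $\gamma(h)=|h-\Gamma|^2/(\Im h\,\Im\Gamma)$, which packages $|g-\Gamma|$ and $(\Im g)^{-1}$ together. The M\"obius map $h\mapsto-(z+h)^{-1}$ is a (non-strict) contraction for $\gamma$, so the problematic prefactor disappears at the outset. Strict contraction is then obtained not by a variance/MZ mechanism but by a \emph{non-alignment} argument: one tracks the angles $\alpha_{ij}=\arg(h_i-\Gamma)\overline{(h_j-\Gamma)}$ and shows that the argument of $\hat h-\Gamma$ is rotated by $\arg(\hat h\Gamma)$, which stays away from $0$ uniformly on $\dH_E$. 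This gives an ``asymmetric'' inequality of the form $\dE\gamma^p((h_r+h_l)/2)\le(1-\varepsilon)\tfrac12(\gamma^p(h_l)+\dE_s\gamma_i^p)+R(\lambda)$, from which $\dE\gamma^p(g)\le R/\varepsilon$ follows; both Euclidean conclusions are then read off from $\gamma$ via the elementary Lemma~\ref{lem:boundgamma}. Your MZ step captures the right intuition (averaging over $N\ge2$ offspring), but it is applied in the wrong metric.
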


The strategy of proof is inspired by the geometric approach to Klein's Theorem initiated by Froese, Hasler and Spitzer in \cite{froese2007absolutely} and refined in Keller, Lenz and Warzel \cite{keller2012absolutely}. The main technical novelty is that Theorem \ref{thm:main} holds uniformly on the distribution of $(N,v)$. To achieve this, we use some convexity and symmetries hidden in \eqref{eq:RDEz}, as a result, our proof is at the same time simpler than \cite{froese2007absolutely,keller2012absolutely} and more efficient.

We will obtain the following corollary of Theorem \ref{thm:main} which applies notably to  operators on random trees with law $\UGW(P_\star)$ when the minimal degree is $2$. Let $\mathcal{F}_\lambda$ and $ \beta_p(\lambda)$ be 
$$\mathcal{F}_\lambda = \left\{\frac{|v_\star|}{\rho} < \lambda,\, \frac{N_\star}{d_\star} \in (1-\lambda, 1+\lambda) \right\},$$
$$
\beta_p(\lambda) = \dE \left[\IND_{\cF_\lambda^{ c}} \left(1+\left|\frac{v_\star}{\rho}\right|\right)^{2p}\left( \frac{N_\star}{d_\star} + \frac{d_\star}{N_\star} \right)^{p}\right],$$
where $\rho = d_\star/d$ and recall the definition of $\Gamma_\star$ in \eqref{screcursiono}.

\begin{corollary}\label{cor:main} Let $0 <E <2$, $p\geq2$ and $(N_\star,v_\star)$, $(N,v)$ be random variables in $ \dN \times \dH$ with $\dE N_\star = d_\star$, and $\dE N = d$. Assume $\dP ( N_\star  \geq 1)  =\dP ( N \geq 1) = 1$ and either $v$ real or $\dE N^{2p} < \infty$. Let $\delta >0$ be as in Theorem \ref{thm:main}. There exists $C_\star >0$ depending only on $(E,p)$ such that for any  $g_o(z)$ and $g(z)$ satisfying \eqref{RDEoz}-\eqref{eq:RDEz}, if $\lambda + \alpha_p (\lambda)  \leq \delta$ then for all $z \in \dH_E$,
$$\dE \left[ |g_o(z)-\Gamma_{\hspace{-2pt}\star}(z)|^{p} \right] \leq 
C_\star \rho^{-p}  \max\left\{ \veps(\lambda) , \sqrt{\veps(\lambda)}\right\},$$
$$\dE \left[ (\Im g_o(z))^{-p}\right] \leq C_\star \left( 1 + \beta_p(\lambda) \right)^p \left(\rho + \frac{1}{\rho}\right)^{p},$$
where $\veps(\lambda) = \big(1+\lambda + \alpha_p(\lambda)\big)\big(1+\lambda + \beta_p(\lambda)\big)-1$.
\end{corollary}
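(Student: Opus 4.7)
The plan is to derive the corollary by a one-step comparison: Theorem \ref{thm:main} already controls the ``inner'' variables $g$ attached to non-root vertices via the recursion \eqref{eq:RDEz}, so only one extra resolvent computation is needed to pass from $g$ to $g_o$ and from $\Gamma$ to $\Gamma_\star$. I would first apply Theorem \ref{thm:main} at parameter $(E,p)$ to obtain, uniformly in $z \in \dH_E$, $\dE|g(z)-\Gamma(z)|^p \leq C\sqrt{\lambda+\alpha_p(\lambda)}$ and $\dE(\Im g(z))^{-p} \leq C$. Since $|\Gamma(z)|$ and $\Im\Gamma(z)$ are bounded above and below (by some $c_E>0$) uniformly on $\dH_E$ for $E<2$, from the first bound we also derive $\dE|g|^p,\,\dE(\Im g)^p \leq C'$.

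For the first assertion, set $A_o = z + d^{-1}\sum_{i=1}^{N_\star} g_i + v_\star$ and $B = z + \rho\Gamma$. The relations $g_o A_o = -1 = \Gamma_\star B$ yield the identity
$$g_o - \Gamma_\star = g_o\,\Gamma_\star\,(A_o - B).$$
The bound $\Im\Gamma \geq c_E$ gives $|\Gamma_\star| \leq 1/(\rho c_E)$, which supplies the prefactor $\rho^{-p}$. I would then decompose
$$A_o - B = \rho\Big(\tfrac{1}{d_\star}\sum_{i=1}^{N_\star}(g_i-\Gamma) + \big(\tfrac{N_\star}{d_\star}-1\big)\Gamma\Big) + v_\star,$$
use $|g_o| \leq 1/\Im A_o$ with $\Im A_o \geq d^{-1}\sum\Im g_i$, and split $\dE|g_o-\Gamma_\star|^p \leq (\rho c_E)^{-p}\dE\bigl[|A_o-B|^p/(\Im A_o)^p\bigr]$ along $\mathcal{F}_\lambda$ and its complement. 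On $\mathcal{F}_\lambda$, every factor involving $N_\star/d_\star$ or $v_\star/\rho$ is bounded by $1+\lambda$, and the $g_i-\Gamma$ contribution is controlled by Theorem \ref{thm:main}, giving a bound with the $\sqrt{\lambda+\alpha_p}$ factor and multiplicative $(1+\lambda)^p$. On $\mathcal{F}_\lambda^c$, the tails are absorbed by the definition of $\beta_p(\lambda)$, producing the multiplicative factor $(1+\beta_p)^p$; independence of $(N_\star,v_\star)$ from the subtree variables $(g_i)$ is essential in separating the two sums. Algebraic recombination then produces the product structure $\veps(\lambda) = (1+\lambda+\alpha_p)(1+\lambda+\beta_p)-1$ and yields the $\max(\veps,\sqrt\veps)$ bound (the $\sqrt\veps$ branch dominating when $\veps$ is small).

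For the second assertion I would use the identity $(\Im g_o)^{-1} = |A_o|^2/\Im A_o = \Im A_o + (\Re A_o)^2/\Im A_o$ to split the problem in two. The first piece is controlled in $L^p$ by bounding $\Im A_o \leq |z| + |v_\star| + d^{-1}\sum|g_i|$ and using the boundedness of $\dE|g|^p$ together with $\dE(N_\star/d)^p \leq \rho^p[(1+\lambda)^p + \beta_p(\lambda)]$, producing a factor of $\rho^p(1+\beta_p)^p$. The second piece is bounded via $\Im A_o \geq d^{-1}\sum\Im g_i$ and Jensen applied to the convex maps $x\mapsto 1/x$ and $x\mapsto x^p$, yielding $\dE(\Im A_o)^{-p} \leq \dE(d/N_\star)^p \cdot \dE(\Im g)^{-p} \leq C\rho^{-p}(1+\beta_p)^p$, which supplies the $\rho^{-p}$ factor; the $(\Re A_o)^{2p}$ numerator is handled by the same splitting over $\mathcal{F}_\lambda$, $\mathcal{F}_\lambda^c$ along with the $L^p$ control of $|g|$. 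Summing the two pieces produces $(\rho+1/\rho)^p$. The main obstacle is the precise algebraic bookkeeping needed to reproduce the product form of $\veps(\lambda)$ and the exact powers of $(1+\beta_p)$ and $(\rho+1/\rho)$; this requires tuning the Hölder exponents so that $L^p$ bounds propagate without loss of moments, exploiting throughout the independence of $(N_\star,v_\star)$ from the $(g_i)_{i\geq 1}$.
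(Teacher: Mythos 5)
Your strategy — pull out the resolvent identity $g_o-\Gamma_\star = g_o\,\Gamma_\star\,(A_o-B)$ and bound everything in Euclidean terms — is a natural alternative to the paper's route, which stays in the hyperbolic $\gamma$ semi-metric until the final conversion via Lemma~\ref{lem:boundgamma}. However, there is a genuine gap at the step you describe as ``the $g_i-\Gamma$ contribution is controlled by Theorem~\ref{thm:main}.'' You plan to use only the stated conclusions of Theorem~\ref{thm:main}, i.e. $\dE|g-\Gamma|^p \leq C\sqrt{\lambda+\alpha_p}$ and $\dE(\Im g)^{-p}\leq C$. But the quantity you then need to control is
$\dE\bigl[|\tfrac1d\sum_i(g_i-\Gamma)|^p/(\Im A_o)^p\bigr]$, where the numerator and denominator both involve the \emph{same} sum over $N_\star$ terms, and any splitting that decouples them loses a factor of $d_\star$ (e.g. bounding $|\sum(g_i-\Gamma)|^p$ by $N_\star^{p-1}\sum|g_i-\Gamma|^p$ and $\sum\Im g_i$ by $N_\star\min_i\Im g_i$ gives $d_\star\,\dE|g-\Gamma|^p\,\dE(\Im g)^{-p}$), while a naive H\"older/Cauchy--Schwarz split $\dE[X/Y]\le\sqrt{\dE X^2}\sqrt{\dE Y^{-2}}$ requires $\dE|g-\Gamma|^{2p}$ and $\dE(\Im g)^{-2p}$, which the theorem at parameter $p$ does not provide (applying it at $2p$ would produce $\alpha_{2p}$, not $\alpha_p$).

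The correct way to beat this, with no loss in $d_\star$ and no higher moments, is to note $\tfrac{|\sum(g_i-\Gamma)|}{\sum\Im g_i}=\tfrac{|\bar g-\Gamma|}{\Im\bar g}$ where $\bar g=\tfrac{1}{N_\star}\sum g_i$, and then exploit the convexity of $\gamma$ (Lemma~\ref{prop:propgamma}$(ii)$): $\gamma(\bar g)\le\tfrac{1}{N_\star}\sum\gamma(g_i)$, followed by the $L^{p}$ bound on $\gamma(g)$ itself. This is exactly what the paper's proof does (it opens with the chain $\gamma(g_o,\Gamma_\star)=\gamma(-1/g_o,-1/\Gamma_\star)\le\gamma(\tfrac{N_\star}{d_\star}\tfrac{1}{N_\star}\sum g_i+\tfrac{v_\star}{\rho},\Gamma)$ and then applies Lemma~\ref{prop:per1} and the convexity of $\gamma^p$). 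Crucially, the paper invokes the intermediate estimate $\dE\gamma^p(g)\lesssim\lambda+\alpha_p(\lambda)$ from Equation~\eqref{eq:thmmain} \emph{inside} the proof of Theorem~\ref{thm:main}, not the theorem's stated conclusions, since $\dE\gamma^p(g)$ cannot be recovered from $\dE|g-\Gamma|^p$ and $\dE(\Im g)^{-p}$ alone. So your Euclidean route cannot avoid the $\gamma$ framework and, once you bring it in, your argument essentially reduces to the paper's; as written it has a gap that a purely ``direct'' estimate cannot close. The same structural issue recurs in your treatment of $(\Re A_o)^2/\Im A_o$ for the second assertion, where the $L^p$ control of $|g|$ you cite is not sufficient to handle $|g|^2/\Im g$ without another pass through the resolvent recursion or through $\gamma$.
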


We postpone to Section \ref{sec:specmeas} some well-known consequences of this corollary on the spectral measure $\mu_o$ of $H$.

\paragraph{Trees with leaves.} If $\dP ( N = 0) >0$, the random tree $\cT$ with law $\UGW(P_\star)$ has leaves. For the adjacency operator, even when the tree is infinite, this creates atoms in the spectrum located at all eigenvalues of the finite pending trees which have positive probability, see \cite{coursSRG}. Nevertheless a strategy introduced in \cite{MR3411739} allows to use results such as Theorem \ref{thm:main} to prove existence of an absolutely continuous part.

For simplicity, we restrict ourselves to the adjacency operator $A$. As already pointed if $d = \dE N > 1$, then $\cT$ is infinite with positive probability. The probability of extinction $\pi_e$ is the smallest solution in $[0,1]$ of the equation
$
x = \varphi(x), 
$
where $\varphi$ is the moment generating function of $P$:
$$
\varphi(x ) = \dE \left[ x^{N}\right]= \sum_{k=0}^{\infty} P_k x^k .
$$
Note that if $\varphi_\star$ is the moment generating of $P_\star$ then 
$
\varphi(x) = \varphi_\star'(x) / \varphi_\star'(1).
$

Next, we define the subtree $\mathcal S \subset \mathcal T$ of vertices $v \in \cT$ such that $\cT_v$ is an infinite tree. If $\cT$ is infinite (that is, $o \in \mathcal S$), this set $\mathcal S$ is the infinite skeleton of $\cT$ where finite pending trees are attached.  If $v \in \mathcal V$,  let $N^s_{v}$ be the number of offsprings which are in $\mathcal S$. By construction, if $v \in \mathcal S$ then $N^s_v \geq 1$. Conditioned on $\cT$ or $\cT_v$, $v\ne o$, is infinite, the moment generating functions of $N^s_o$ and $N^s_v$ are 
\begin{align}\label{eq:defMGFs}
\varphi_\star^s (x)  = \frac{\varphi_\star( (1 - \pi_e) x + \pi_e ) -\varphi_\star(\pi_e)  }{1 - \varphi_\star(\pi_e) }\quad \hbox{ and } \quad 
 \varphi^s (x) &= \frac{\varphi( (1 - \pi_e) x + \pi_e ) - \pi_e }{1 - \pi_e},
\end{align}
see Subsection \ref{subsec:skeleton} for more details. We denote by $N^s_\star$ and $N^s$ random variables with moment generating functions $\varphi_\star^s$ and $\varphi^s$. We set $d_s = \dE N^s = \varphi'(1-\pi_e)$.

We then consider the rescaled adjacency operator
$$
H = \frac{A}{\sqrt d_s},
$$ 
and let $g_o(z) = \langle \delta_o , (H-z)^{-1} \delta_o \rangle $ be as in \eqref{eq:defgo}. By decomposing the recursion \eqref{RDEoz}-\eqref{eq:RDEz} over offspring in $\mathcal S$ and in $\mathcal T \backslash \mathcal S$, we will obtain the following Lemma which makes the connection between $g_o(z)$ and a recursion on $\mathcal S$ of the type  studied above.

\begin{lemma}\label{lem:RDEleaves} 
Assume $d > 1$. For each $z \in \dH$, there exist random variables $v_\star(z)$ and $v(z)$ in $\dH$, such that, conditioned on $\cT$ is infinite,
\begin{equation*}
g_o (z)\stackrel{d}{=} - \left( z + \frac 1 {d_s} \sum_{i=1}^{N^s_\star} g^s_i(z)  + v_\star(z)\right)^{-1},
\end{equation*}
and 
\begin{equation*}
g^s (z) \stackrel{d}{=}  - \left( z + \frac 1 {d_s} \sum_{i=1}^{N^s} g^s_i (z) + v(z)\right)^{-1},
\end{equation*}
where $(g^s_i)_{i \geq 1}$ are independent iid copies of $g^s$, independent of $(N^s_\star,v_\star(z))$, $(N^s,v(z))$.
\end{lemma}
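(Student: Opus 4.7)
The plan is to expand the standard resolvent recursion \eqref{recursion} for $H = A/\sqrt{d_s}$ (so $V_u = 0$) and split the neighbour sum according to whether an offspring lies in the infinite skeleton $\mathcal S$ or produces a finite pending tree. Concretely, for $u \in \cT$, write the offsprings $\{1,\ldots,N_u\}$ as the disjoint union $I^s_u = \{i : ui \in \mathcal S\}$ and $I^f_u = \{i : ui \notin \mathcal S\}$, so $|I^s_u| = N^s_u$. The recursion \eqref{recursion} becomes
\begin{equation*}
g_u(z) = -\left( z + \frac{1}{d_s} \sum_{i \in I^s_u} g_{ui}(z) + v_u(z) \right)^{-1}, \qquad v_u(z) := \frac{1}{d_s}\sum_{i \in I^f_u} g_{ui}(z).
\end{equation*}
Since every $g_{ui}(z) \in \dH$ (resolvent entry of a self-adjoint operator at $z \in \dH$) and $d_s>0$, we have $v_u(z)\in\dH$.

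Next I would invoke the classical skeleton decomposition of a Galton--Watson tree (this is exactly what is used in Subsection \ref{subsec:skeleton} to derive \eqref{eq:defMGFs}). Conditionally on $\cT$ being infinite: each child of the root independently has an infinite subtree with probability $1-\pi_e$ and a finite one with probability $\pi_e$; the surviving count $N^s_\star$ has moment generating function $\varphi^s_\star$; and conditionally on the partition $(I^s_o,I^f_o)$, the $N^s_\star$ infinite subtrees are i.i.d.\ copies of the tree $\cT$ conditioned on being infinite, while the $N_o - N^s_\star$ finite subtrees are i.i.d.\ copies of $\cT$ conditioned on extinction and independent of the infinite ones. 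The analogous statement for a non-root vertex $u \in \mathcal S$ uses $\varphi^s$ in place of $\varphi^s_\star$, and crucially the conditional law of $\cT_u \mid \cT_u \text{ infinite}$ does not depend on $u \neq o$.

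Let $g^s(z)$ denote a random variable with the law of $g_u(z)$ conditioned on $u \in \mathcal S$ (for any fixed $u \neq o$; by self-similarity the conditional law is the same). Since $g_{ui}(z)$ is a measurable function of the subtree $\cT_{ui}$, the decomposition above gives that, conditionally on $u \in \mathcal S$, the resolvent entries $(g_{ui}(z))_{i \in I^s_u}$ are i.i.d.\ copies of $g^s(z)$ and are independent of the finite subtrees rooted at $\{ui : i \in I^f_u\}$; in particular they are independent of the pair $(N^s_u, v_u(z))$, which is a functional of those finite subtrees together with $N_u$. Relabelling the infinite children $1,\ldots,N^s$ and setting $v(z) := v_u(z)$, this is precisely the second displayed equation of the lemma. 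Applying the same argument at the root with $N^s_\star$ in place of $N^s$ and $v_\star(z) := v_o(z)$ yields the first.

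The only real point requiring care is the independence structure, since $N^s$ and $v(z)$ are not independent of each other (they are coupled through $N_u$ and the binomial thinning). What must be established, and what the skeleton decomposition does give, is that the family $(g^s_i)_{i\ge 1}$ of i.i.d.\ copies is jointly independent of the pair $(N^s, v(z))$; this is immediate once one conditions on the partition $(I^s_u, I^f_u)$ and observes that the infinite subtrees and the finite subtrees are then mutually independent. I expect this bookkeeping of conditional independences to be the main (mild) obstacle; the algebraic part of the argument is a direct rewriting of \eqref{recursion}.
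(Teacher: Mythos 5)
Your proposal is correct and follows essentially the same route as the paper: split the resolvent recursion over offspring in $\mathcal S$ versus finite pending subtrees, set $v(z) = d_s^{-1}\sum_{i\in I^f} g_{ui}(z)$, and invoke the Galton--Watson skeleton decomposition (conditionally on the thinning partition, the infinite subtrees are i.i.d.\ and independent of the finite ones) to get the required independence of $(g^s_i)_{i\ge 1}$ from $(N^s, v(z))$. The paper phrases the same argument in terms of the conditional laws $g^s$ and $g^e$ and their joint moment generating function \eqref{eq:varphis}, but the substance is identical.
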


Since $N_s \geq 1$ with probability one, it is possible to use Theorem \ref{thm:main} provided that we have a moment estimate on $|v(z)|$ and $|v_\star(z)|$, see \eqref{eq:holder}. This is the content of the following technical lemma. It is a refined quantified version of \cite[Proposition 22]{MR3411739}. If $B\subset \dR$ is a Borel set, we define 
\begin{equation}\label{eq:defHB}
\dH_B = \{ z \in \dH : \Re(z) \in B,\,  \Im z \leq 1\}.
\end{equation}
With our previous notation for $E >0$, $\dH_E = \dH_{(-E,E)}$. We also denote by $\ell( \cdot)$ the Lebesgue measure on $\dR$.
Recall that $N$ is the random variable with law $P$ and $d = \dE N$.

\begin{lemma}\label{lem:leaves} 
Let $\veps >0$. There exist universal constants $c_0,C >0$ and a deterministic Borel set $B = B(\veps) \subset \dR$ with $\ell (B^c) \leq \veps$  such that the following holds.  Let $\pi_1 = \dP ( N \leq 1)$. If $4 \pi_1 \dE N^2 \leq d$ and $m \geq 1$ is an integer such that $\pi_1 \leq c_0^m$ then for any $ z\in \dH_B  $,
$$
\dE  \left[ |v(z)|^m \right]  \leq \left( \frac{C }{  \veps  d  } \right)^m  \frac{\dE [ N^m ]}{d^{m}}  d \pi_1, 
$$
and 
$$
\dE  \left[ |v_\star(z)|^m \right]  \leq \left( \frac{C }{  \veps  d  } \right)^m  \frac{\dE [ N_\star^m ]}{d^{m}}  d \pi_1. 
$$
\end{lemma}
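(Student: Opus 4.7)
The plan is to invoke (the proof of) Lemma \ref{lem:RDEleaves} --- specifically the Schur complement at a skeleton vertex $u$ --- to write, conditionally on $u \in \mathcal S$,
\[
v(z) = \frac{1}{d_s}\sum_{i=1}^{K_u} g^f_{ui}(z),
\]
where $K_u = N_u - N^s_u$ counts the children of $u$ with finite subtree and each $g^f_{ui}(z)$ is the rooted resolvent of the $i$-th finite pending tree; these are i.i.d.\ and independent of $(N_u, N^s_u)$ by GW branching. The analogue for $v_\star(z)$ uses $N_\star$ and the root $o$. Then $\dE|v(z)|^m$ splits into a combinatorial factor in $K_u$ and a spectral factor $\dE|Y(z)|^m$ for a single pending-tree rooted resolvent $Y(z)$.

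Under the hypotheses $\pi_1 \leq c_0^m$ (with $c_0$ a small absolute constant) and $4\pi_1 \dE N^2 \leq d$, I first prove the preparatory bounds $\pi_e \leq 2\pi_1$, $d_s \geq d/2$, $\tilde d := \varphi'(\pi_e) \leq 3/4$, and $\dP(u \in \mathcal S) \geq 1/4$. The key step, $\pi_e \leq 2\pi_1$, follows from $\varphi(x) \leq \pi_1 + \pi_1 x + x^2$ and the concavity of $x - \varphi(x)$, which make this function strictly positive at $x = 2\pi_1$ when $\pi_1 < 1/6$, placing $2\pi_1$ past the smaller fixed point $\pi_e$. The remaining bounds follow from Taylor expansion of $\varphi'$ around $0$ and $1$ together with the hypotheses. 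By the power-mean inequality and independence,
\[
\dE\bigl[|v(z)|^m \IND_{u \in \mathcal S}\bigr] \leq d_s^{-m}\,\dE\bigl[K_u^m \IND_{u \in \mathcal S}\bigr]\,\dE|Y(z)|^m,
\]
and since $K_u \mid N_u \sim \mathrm{Bin}(N_u, \pi_e)$, Stirling expansion gives $\dE[K_u^m \mid N_u = n] \leq C_m(n\pi_e + (n\pi_e)^m)$. Averaging and using $\pi_e \leq 2\pi_1$, $\pi_1 d \leq 1/4$, and $\dE N^m \geq d^m$ (Jensen) then yields $\dE[K_u^m \IND_{u \in \mathcal S}] \leq C'_m \pi_1 \dE[N^m]/d^{m-1}$.

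Combined with $d_s \geq d/2$ and $\dP(u \in \mathcal S) \geq 1/4$, the lemma reduces to showing
\[
\dE|Y(z)|^m \leq (C/\veps)^m \quad \text{for all } z \in \dH_B,
\]
on a deterministic Borel set $B$ with $\ell(B^c) \leq \veps$ (the case of $v_\star(z)$ is identical, with $N_\star$ replacing $N$). To construct $B$, I use the pointwise bound $|Y_T(E + i\eta)| \leq 1/\mathrm{dist}(E, \mathrm{spec}(H_T))$ valid for each rooted finite tree $T$ (since $\sum_k |\psi_k(o)|^2 = 1$), and set
\[
B^c \;=\; \bigcup_{T \,:\, p_T > 0}\bigcup_{k} \bigl(\lambda_k(T) - \delta_T,\, \lambda_k(T) + \delta_T\bigr),
\]
where $p_T = \dP(\mathcal T_f = T)$. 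This gives $\ell(B^c) \leq \sum_T 2|T|\delta_T$ while $\dE|Y(z)|^m \leq \sum_T p_T/\delta_T^m$ on $\dH_B$, and the $\delta_T$ are chosen to balance these. A useful input is that under the hypotheses, the pending tree is with overwhelming probability a single leaf: $\dP(|\mathcal T_f| \geq 2) = 1 - \pi_0/\pi_e \leq 3\pi_1$ (from $\pi_e - \pi_0 = \pi_1 \pi_e + \sum_{k \geq 2}\pi_k \pi_e^k \leq 3\pi_1 \pi_e$), so non-leaf trees contribute with total weight of order $\pi_1 \leq c_0^m$.

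The hard part will be this final balance. Rooted finite trees of size $n$ are combinatorially numerous (on the order of $C_0^n$), so one must weight the neighborhoods $(\lambda_k(T) - \delta_T, \lambda_k(T) + \delta_T)$ carefully; the exponential decay of $\dP(|\mathcal T_f| = n)$ inherited from the subcriticality $\tilde d \leq 3/4$, combined with the smallness condition $\pi_1 \leq c_0^m$ (which forces $\dP(|\mathcal T_f| \geq 2) \leq 3\pi_1 \leq 3c_0^m$), is what allows the optimization of $\delta_T$ to succeed with absolute constants $c_0, C$ independent of $d$ and of the offspring distribution $P$.
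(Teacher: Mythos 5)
Your overall blueprint mirrors the paper's: decompose $v(z)$ via the pending-tree resolvents, bound the Binomial moments of the number of finite children, then construct a good Borel set $B$ by excising small neighborhoods of eigenvalues of finite trees, and finally balance the Lebesgue measure of $B^c$ against the size of the resolvent off $B^c$. The preparatory estimates ($\pi_e \leq 2\pi_1$, $d_s \geq d/2$, the Binomial moment bound) are fine. But there is a genuine gap in the final balance, precisely where you flag it as ``the hard part.''

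The issue is the decay rate you propose to use for $\dP(|\mathcal T_f| = n)$. Subcriticality $\tilde d = \varphi'(\pi_e) \leq 3/4$ only gives a fixed geometric rate, $\dP(|\mathcal T_f| \geq n) \lesssim \rho^n$ for some absolute $\rho < 1$. That is not enough. On $\dH_B$ the resolvent of a pending tree of size $n$ is bounded by $1/\delta_n$, and if you want $\ell(B^c) \leq \veps$ while covering the $\sim k\, C_0^{\,k}$ eigenvalues of trees of size $k$, you are forced to take $\delta_n \lesssim \veps\, 2^{-n}/(n C_0^n)$. Then the $n$-th term in $\dE|Y(z)|^m \leq \sum_n \dP(|\mathcal T_f| = n)/\delta_n^m$ scales like $\rho^n (2C_0)^{nm}$, which diverges for every $m \geq 1$ since $2C_0 > 1/\rho$. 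A fixed subcritical rate cannot beat the geometric growth $C_0^{nm}$ once $m$ is raised to the power. What rescues the paper's argument is that the total-progeny tail is made $\pi_1$-dependent: choosing the right level in the generating-function tail estimate gives $\dP(|\mathcal T_f| \geq k) \leq \frac{\sqrt{\pi_1}}{\pi_e}\, (2\sqrt{\pi_1})^k$ (this is Corollary~\ref{cor:totalprogeny}, applied with $q = \pi_1$), so the decay per generation is $2\sqrt{\pi_1}$, not $3/4$. The hypothesis $\pi_1 \leq c_0^m$ is then used exactly here --- to ensure $2\sqrt{\pi_1}\,(2C_0)^m < 1/2$ --- rather than merely to bound the weight of non-leaf trees. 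Your observation that $\dP(|\mathcal T_f| \geq 2) \leq 3\pi_1 \leq 3c_0^m$ controls the size-$\geq 2$ mass but says nothing about the size-$n$ mass for large $n$, which is where the divergence would occur.

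A secondary point: as written, your $B^c = \bigcup_{T:\,p_T>0}\bigcup_k(\lambda_k(T)-\delta_T,\lambda_k(T)+\delta_T)$ with $\delta_T$ tuned to the $p_T$ produces a set depending on the full offspring law, not just on $\veps$ (and a scaling by $d_s$) as the lemma requires. The fix is the one the paper uses: index the covering by tree \emph{size} $k$ rather than by individual trees $T$, define $\Lambda_k$ as the union of the spectra of all trees on $k$ vertices (so $|\Lambda_k| \leq C^k$), and take $\delta_k = \veps\,2^{-k-1}/|\Lambda_k|$ depending only on $k$ and $\veps$. This also directly produces the bound $|g^e(z)| \leq 2^{k+1}|\Lambda_k|/\veps$ on $\{|\mathcal T_f| = k\}$ for $z \in \dH_B$, which you can then combine with the $(2\sqrt{\pi_1})^k$ tail.
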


Lemma \ref{lem:leaves} implies that $v(z)$ and $v_\star(z)$ are small if $N/d$ is concentrated around $1$.  As a byproduct of Lemma \ref{lem:RDEleaves} and Lemma \ref{lem:leaves}, we can use Theorem \ref{thm:main} and Corollary \ref{cor:main} to obtain an estimate on $|g^s(z) - \Gamma(z)|$ and $|g_o(z) - \Gamma_\star(z)|$ conditioned on non-extinction, for $z \in \dH_E \cap \dH_B$. We will see an application in the proof of Theorem \ref{thm:POI}. We note that from the proof, the Borel set $B$ is fully explicit but not simple: $B^c$ is dense in $\dR$. We note also that the set $B$ depends on $\veps$ and $d_s$ (through a simple scaling of its complementary).

\paragraph{Discussion.} The proofs in this paper are quantitative. For example, given $(E,p)$, we could produce in principle numerical constants for the parameters $\delta$ and $C$ in Theorem \ref{thm:main}. Since these constants should not be optimal, in order to avoid  unnecessarily complications of the proofs, we have not fully optimized our arguments and the statements above.

There is an important application of our main result for the eigenvectors of finite graphs converging in Benjamini-Schramm sense to $\UGW(P_\star)$. In \cite{zbMATH07097490}, the authors establish a form of quantum ergodicity for eigenvectors of finite graphs under some assumptions on the graph sequence. A key assumption called (Green) in \cite{zbMATH07097490} is implied by Theorem \ref{thm:main} for $p$ large enough (more precisely, the conclusion $\dE (\Im g(z))^{-p} \leq C$ for all $z \in \dH_E$ gives a weak form of (Green) which is sufficient to apply the main result in \cite{zbMATH07097490}, see \cite[Subsection 3.2]{MR4044435}). For example, the assumptions of \cite[Theorem 1.1]{zbMATH07097490} are met with probability one for a sequence of random graphs sampled uniformly among all graphs with a given degree sequence with degrees between $3$ and a uniform bound $D$ and whose empirical degree distribution converges to $P_\star$ for which we can apply the conclusion of Corollary \ref{cor:main} with $p$ large enough. This answers Problem 4.4 and Problem 4.5 in \cite{MR4044435} for the weak form of (Green).

 \paragraph{Organization of the rest of the paper.} In Section \ref{sec:pfmain}, we prove Theorem \ref{thm:main} and Corollary \ref{cor:main}. In Section \ref{sec:leaves}, we prove Lemma \ref{lem:RDEleaves}  and Lemma \ref{lem:leaves}. In the final Section \ref{sec:specmeas}, we gather some standard properties relating the spectral measures of an operator and its resolvent. As a byproduct, we prove Theorem \ref{thm:POI}, Theorem \ref{thm:POIcore} and Theorem \ref{thm:Anderson} from the introduction.

\section{Proof of Theorem \ref{thm:main} and Corollary \ref{cor:main}}
\label{sec:pfmain}
\subsection{Hyperbolic semi-metric}

\paragraph{Froese-Hasler-Spitzer's strategy.} Following \cite{froese2007absolutely}, we see $\dH$ as the Poincaré half-plane model endowed with the usual hyperbolic distance $\mathrm{d}_{\dH}$. The key observation is that the application $h \mapsto \hat{h}=-(z+h)^{-1}$, seen in $(\dH,\mathrm{d}_{\dH})$ acts as a contraction and thus determines $\Gamma(z)$ as its unique fixed point. As in \cite{froese2007absolutely}, we introduce the functional $\g{\cdot,\cdot}$ defined by  
$$\g{g,h} = \frac{|g-h|^2}{\Im g \Im h }, \qquad g,h \in \dH,$$ 
which is related to the usual hyperbolic distance by $\mathrm{d}_{\dH}(g,h) = \cosh^{-1}(1+\g{g,h}/2)$. The next lemma allows us to derive usual bounds from the ones obtained in the hyperbolic plane.
\begin{lemma}[From hyperbolic to Euclidean bounds]\label{lem:boundgamma}
\,
For any $g,h \in \dH$, we have
\begin{equation*}
    |g - h|^2 \leq |h|^2\left(4\gamma^2(g,h) + 2\gamma(g,h)\right) \quad \hbox{ and } \quad \frac{1}{\Im g} \leq \frac{4\gamma(g,h)+2}{\Im h}.
\end{equation*}

\end{lemma}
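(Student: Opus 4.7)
The plan is to work throughout with $\gamma = \gamma(g,h)$, $a = \Im g$, $b = \Im h$, and to exploit the elementary fact that $|\Im g - \Im h| \leq |g - h|$. Squaring this gives
$$(a-b)^2 \leq |g-h|^2 = \gamma \, ab,$$
which is the only information we really need from $\gamma$. Viewed as a quadratic inequality, $a^2 - (2+\gamma)ab + b^2 \leq 0$, so $a/b$ lies between the two positive roots of $t^2-(2+\gamma)t+1 = 0$, namely
$$\frac{(2+\gamma) \pm \sqrt{\gamma^2+4\gamma}}{2}.$$
Since these roots are reciprocals, the ratios $a/b$ and $b/a$ are both bounded above by $R(\gamma) := \tfrac{1}{2}\bigl((2+\gamma) + \sqrt{\gamma^2+4\gamma}\bigr)$.

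For the second inequality, $b/a \leq R(\gamma)$ reads $1/\Im g \leq R(\gamma)/\Im h$, so it suffices to show $R(\gamma) \leq 4\gamma + 2$. Using $\sqrt{\gamma^2+4\gamma} \leq \gamma + 2\sqrt{\gamma}$ and then $2\sqrt{\gamma} \leq 1 + \gamma$ (AM--GM), one gets $R(\gamma) \leq \tfrac{1}{2}(2+\gamma+\gamma+2\sqrt{\gamma}) = 1 + \gamma + \sqrt{\gamma} \leq \tfrac{3}{2}+\tfrac{3\gamma}{2} \leq 2+4\gamma$, which is what is needed.

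For the first inequality, use the twin bound $a/b \leq R(\gamma) \leq 4\gamma+2$, i.e.\ $\Im g \leq (4\gamma+2)\Im h$. Multiplying by $\Im h$ and using $\Im h \leq |h|$ gives $\Im g \, \Im h \leq (4\gamma+2)|h|^2$. Plugging this into $|g-h|^2 = \gamma \,\Im g \,\Im h$ yields
$$|g-h|^2 \leq \gamma(4\gamma+2)|h|^2 = (4\gamma^2 + 2\gamma)|h|^2,$$
which is the desired bound. The only delicate point is the routine but easily mis-estimated algebra controlling $R(\gamma)$ by $4\gamma+2$; no deeper input is needed, since the two assertions of the lemma turn out to be essentially the same symmetric statement applied in opposite directions.
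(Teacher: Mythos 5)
Your proof is correct, and it takes a genuinely different route from the paper's. The paper proves each inequality by a separate case analysis: for the resolvent bound it splits on whether $\Im g \leq \Im h/2$, and for the Euclidean bound it first establishes the auxiliary estimate $|g| \leq 4\gamma(g,h)\Im h + 2|h|$ by splitting on $|g| \leq 2|h|$ versus $|g| > 2|h|$, then uses $\Im g \leq |g|$. You instead distill the single inequality $(\Im g - \Im h)^2 \leq |g-h|^2 = \gamma \Im g \Im h$ into a quadratic constraint on the ratio $t = \Im g/\Im h$, bound both $t$ and $1/t$ by the larger root $R(\gamma)$, and reduce everything to the scalar estimate $R(\gamma) \leq 4\gamma + 2$. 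This is more unified (both conclusions flow from the same root bound applied in the two directions), avoids the case analysis entirely, and makes the symmetry between the two assertions transparent. The paper's route does yield the marginally tighter intermediate inequality $|g-h|^2 \leq 4\gamma^2(\Im h)^2 + 2\gamma |h|\Im h$, but the lemma only claims the weaker $|h|^2(4\gamma^2 + 2\gamma)$, which you obtain directly, so nothing is lost. The algebra controlling $R(\gamma)$ (via $\sqrt{\gamma^2 + 4\gamma} \leq \gamma + 2\sqrt{\gamma}$ and then AM--GM) checks out.
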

\begin{proof} We start with the second inequality. We may assume that $\Im g \leq \Im h /2$, the bound is trivial otherwise. In particular we have $|g-h| \geq \Im h /2$, thus
$$\frac{1}{\Im g} \leq \frac{4|g-h|^2}{\Im g \Im h^2} = \frac{4\gamma(g,h)}{\Im h}.$$
We obtain the claim bound. For the first inequality, considering the case $|g| \leq 2|h|$ and $|g| \geq 2 |h|$, we find similarly for any $g,h \in \dH$
$$|g| \leq 4\g{g,h}\Im h + 2|h|,$$
see  \cite{froese2007absolutely,keller2012absolutely}. 
From $|g - h|^2  = \gamma(g,h) \Im g  \Im h$ and $\Im g \leq |g|$, we get
$$|g - h|^2 \leq 4\gamma^2(g,h) (\Im h)^2 + 2\gamma(g,h) |h| \Im h,$$
this implies the first bound.   \end{proof}

For ease of notation, we write 
$$\g{h} = \g{h,\,\Gamma(z)}, \quad h \in \dH,$$
 where the dependency in $z$ is implicit.
 
Our strategy is the  following. Let $g = g(z)$ satisfying \eqref{eq:RDEz}. For some transformation $\phi \colon \dH \to \dH$ verifying the equation in distribution $g \stackrel{d}{=} \phi(g) $, we want to obtain uniformly in $z$ a contraction of the following type
\begin{equation}\label{eq:contraction}
  \forall h\in \dH : \G{\phi(h)} \leq \left(1-\veps \mathbf{1}_{\{h \not\in K\}}\right) \G{h}   ,
\end{equation}
where $K$ is an hyperbolic compact of $\dH$. Denoting $R = \max_{h\in K} \G{h}$ and assuming that $\dE \G{g}$ is finite, this implies that $\dE \G{g} \leq R/\veps$. We will derive the desired bound on $\dE |g-\Gamma|^{p}$ and $\dE (\Im g)^{-p}$ from Lemma \ref{lem:boundgamma}.

\paragraph{Properties of $\gamma$.} The function $h \mapsto \g{h}$ has the following properties:
\begin{lemma}[Convexity $\gamma$]\label{prop:propgamma}
\,
\begin{itemize}
\item[(i)]  For any $h \in \dH,\, \g{-(z+h)^{-1}} \leq \g{h}$.
    \item[$(ii)$] The function $\gamma$ is a convex function.
    \item [$(iii)$] For any integer $n\geq1$ and $(h_1,\ldots h_n) \in \dH^n$:
    $$\g{\frac{1}{n}\sum_{i=1}^n h_i} = \frac{1}{n} \sum_{i,j=1}^n \cos \alpha_{ij}\sqrt{q_i \g{h_i}}\sqrt{q_j \g{h_j}},$$
    where $q_i = \frac{\Im h_i }{\sum_{j} \Im h_j}$ and $\alpha_{ij}=\arg (h_i -\Gamma)  \overline{(h_j - \Gamma)}$ with the convention that $\arg 0 = \pi/2$.
\end{itemize}
\end{lemma}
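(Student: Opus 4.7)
The three parts are essentially independent; each boils down to a direct calculation on $\dH$ using the explicit formula $\gamma(h,h')=|h-h'|^2/(\Im h\,\Im h')$. I will handle them in the order $(i),(ii),(iii)$.

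For $(i)$, the plan is to compute $\gamma(\phi_z(h),\phi_z(\Gamma))$ directly, where $\phi_z(w)=-(z+w)^{-1}$, and then observe that $\phi_z(\Gamma)=\Gamma$ by the self-consistent equation \eqref{screcursion}, so this quantity equals $\gamma(\phi_z(h))$. Using $\phi_z(h)-\phi_z(\Gamma)=(h-\Gamma)/[(z+h)(z+\Gamma)]$ and $\Im\phi_z(w)=(\Im z+\Im w)/|z+w|^2$ for $w\in\{h,\Gamma\}$, the $|z+h|^2|z+\Gamma|^2$ factors cancel and one finds
\[
\gamma(\phi_z(h))=\frac{|h-\Gamma|^2}{(\Im z+\Im h)(\Im z+\Im\Gamma)}.
\]
Since $\Im z>0$, this is bounded above by $|h-\Gamma|^2/(\Im h\,\Im\Gamma)=\gamma(h)$, which is the desired contraction. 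Geometrically this is the familiar fact that translation by $z\in\dH$ is a hyperbolic contraction and inversion $w\mapsto-1/w$ is a hyperbolic isometry.

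For $(ii)$, fix $\Gamma=a+ib$ with $b>0$ and write $h=u+iv$ with $v>0$. Then
\[
\gamma(h)=\frac{(u-a)^2}{vb}+\frac{v}{b}-2+\frac{b}{v}.
\]
I would verify convexity term by term: $(u-a)^2/(vb)$ is jointly convex in $(u,v)\in\dR\times(0,\infty)$ (it is the perspective of $x\mapsto x^2/b$); $v/b$ is linear; $b/v$ is convex in $v>0$; and the constant $-2$ is trivial. A sum of convex functions is convex, which settles the claim.

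For $(iii)$, the proof is purely algebraic. I will expand
\[
\Bigl|\tfrac{1}{n}\sum_i h_i-\Gamma\Bigr|^2=\frac{1}{n^2}\sum_{i,j}(h_i-\Gamma)\overline{(h_j-\Gamma)},
\]
noticing that the imaginary parts cancel by antisymmetry under $(i,j)\leftrightarrow(j,i)$, so the sum is real and equals $\sum_{i,j}|h_i-\Gamma||h_j-\Gamma|\cos\alpha_{ij}$ with $\alpha_{ij}$ as defined (the convention $\arg 0=\pi/2$ ensures the vanishing terms contribute $0$). Using $\Im(\tfrac1n\sum_i h_i)=\tfrac1n\sum_k\Im h_k$ and factoring the denominator as $(\sum_k\Im h_k)\,\Im\Gamma$, one recognizes $|h_i-\Gamma|/\sqrt{(\sum_k\Im h_k)\Im\Gamma}=\sqrt{q_i\gamma(h_i)}$, after which the claimed identity falls out by regrouping. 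There is no real obstacle here beyond careful bookkeeping of the factors of $n$; the main conceptual input is the contraction in $(i)$, while $(ii)$ and $(iii)$ are convexity/expansion statements that will be applied later (in Jensen-type inequalities and in rewriting $\gamma$ of the recursion \eqref{eq:RDEz}).
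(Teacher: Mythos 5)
Your proposal is correct. Parts $(i)$ and $(iii)$ follow essentially the same route as the paper: for $(i)$ the paper simply cites that $h\mapsto -(z+h)^{-1}$ is a hyperbolic contraction fixing $\Gamma$, whereas you carry out the computation explicitly and arrive at the stronger pointwise identity
\[
\gamma\bigl(-(z+h)^{-1}\bigr)=\frac{|h-\Gamma|^2}{(\Im z+\Im h)(\Im z+\Im\Gamma)},
\]
which makes the role of $\Im z>0$ transparent; for $(iii)$ both proofs are the same direct expansion of $|\tfrac1n\sum_i(h_i-\Gamma)|^2$ and regrouping of the factors $\sqrt{q_i\gamma(h_i)}$.

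Part $(ii)$ is where you take a genuinely different route. The paper deduces convexity from $(iii)$: applying Cauchy--Schwarz to the bilinear form in $(iii)$ together with $\sum_i q_i=1$ yields $\gamma\bigl(\tfrac1n\sum_i h_i\bigr)\le\tfrac1n\sum_i\gamma(h_i)$, which (for all $n$ and all choices of the $h_i$) gives convexity on rational combinations and hence, by continuity, full convexity. You instead expand $\gamma$ in real coordinates $h=u+iv$, $\Gamma=a+ib$, as
\[
\gamma(h)=\frac{(u-a)^2}{vb}+\frac{v}{b}-2+\frac{b}{v},
\]
and observe that each summand is convex in $(u,v)\in\dR\times(0,\infty)$ --- in particular the first term is the perspective of $x\mapsto x^2/b$ and hence jointly convex. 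This is a self-contained, structurally clean argument that bypasses both $(iii)$ and the limiting/continuity step, at the small cost of introducing coordinates. Both proofs are correct; yours arguably better explains \emph{why} $\gamma$ is convex, while the paper's is shorter given that $(iii)$ is needed anyway for Lemma \ref{prop:non-alignment} and Lemma \ref{prop:contraction}.
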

\begin{proof}
Statement $(i)$ comes from the fact that $h \mapsto \hat{h}=-(z+h)^{-1}$ is a contraction for the hyperbolic distance  $\mathrm{d}_{\dH}$ together with the fact that $\hat{\Gamma}=\Gamma$. Statement $(ii)$ follows from $(iii)$, Cauchy–Schwarz inequality and $\sum_i q_i = 1$,
$$\g{\frac{1}{n}\sum_{i=1}^n h_i} \leq \frac{1}{n} \sum_{i,j} \sqrt{q_i \gamma_i}\sqrt{q_j \gamma_j} \leq  \frac{1}{n}\left( \sum_{i} \gamma_i \right) \left( \sum_i q_i \right) = \frac{1}{n}\sum_i \g{h_i}.$$

The last statement $(iii)$ follows by expanding the squared term in the definition of $\gamma$:
\begin{align*}
\gamma\left(\frac{1}{n}\sum_{i} h_i\right) & = \frac{|\frac{1}{n}\sum_{i} h_i-\Gamma|^2}{\frac{1}{n} \sum_{i} \Im h_i \Im \Gamma} = \frac{1}{n \Im \Gamma} \frac{1}{\sum_j \Im h_j} \sum_{i,j} |h_i-\Gamma||h_j-\Gamma| \cos \alpha_{ij} .
\end{align*}
We obtain the requested identity $(iii)$.
\end{proof}

The next lemma gathers properties of $\gamma$. In the sequel, for $\lambda_0 > 0$, we denote by $\LIP(\lambda_0) $ the set of functions $f : [0,\lambda_0) \to \dR^+$ such that for some $C > 0$, $f(t) \leq Ct$ for all $t \in [0,\lambda_0/2]$.

\begin{lemma}[Regularity of $\gamma$]\label{prop:per1} 
Let  $0 < E < 2$, $p \geq 2$ and $z \in \dH_E$.
\begin{enumerate}
\item[(i)] 
There exists a function $c \in \LIP(1)$ depending only on $(E,p)$ such that for any $h \in \dH, \lambda \in [0,1)$, and $(u,v)\in \dR \times \dH$ with $|u|,|v| \leq \lambda$,
$$\gamma^q \big( (1+u) h +  v \big) \leq (1+c(\lambda))\gamma^q(h) + c(\lambda), \qquad q \in [1,p]$$
\item[(ii)] There exists $C = C(E,p)$ such that for any $h\in \dH,\mu > 0$ and $v\in \dH$,
$$\gamma^{p} \big( \mu h + v \big) \leq C (1+|v|^{2p})\left(\mu+ \frac{1}{\mu}\right)^{p}(\gamma^{p}(h) +1 ).$$
\end{enumerate}
\end{lemma}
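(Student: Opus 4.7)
The key geometric input is that for $z \in \dH_E$ with $0 < E < 2$, the fixed point $\Gamma(z)$ stays in a fixed compact subset of $\dH$: there exist $c_1(E), c_2(E) > 0$ such that $c_1 \leq \Im \Gamma(z) \leq |\Gamma(z)| \leq c_2$ uniformly in $z \in \dH_E$ (this follows from the explicit formula $\Gamma(z) = (-z + \sqrt{z^2-4})/2$). Combined with the definition of $\gamma$, this yields the pointwise bound
$$\frac{1}{\Im h} \leq C(E)(1 + \gamma(h)) \qquad \forall h \in \dH, \, z \in \dH_E,$$
because when $\Im h \geq \Im\Gamma/2$ the bound is trivial, while when $\Im h < \Im\Gamma/2$ we have $|h-\Gamma| \geq \Im\Gamma/2 \geq c_1/2$, which forces $\gamma(h) \geq c_1^2/(4 c_2 \Im h)$. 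This single inequality is what converts the only problematic term appearing below into a controllable multiplicative perturbation of $\gamma(h)$, and is precisely where the assumption $E < 2$ enters.

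For part $(i)$, write $h' = (1+u) h + v$ and decompose $h' - \Gamma = (1+u)(h-\Gamma) + (u\Gamma + v)$; note $|u\Gamma + v| \leq (c_2+1)\lambda$. Applying $|a+b|^2 \leq (1+\lambda)|a|^2 + (1+\lambda^{-1})|b|^2$ in the numerator and $\Im h' \geq (1-\lambda)\Im h$ in the denominator of $\gamma(h')$ gives
$$\gamma(h') \leq \frac{(1+\lambda)(1+u)^2}{1-\lambda}\,\gamma(h) + \frac{C_E\, \lambda}{\Im h}.$$
The prefactor of $\gamma(h)$ is $1 + O_E(\lambda)$; the pointwise bound above absorbs the additive term into $C'_E \lambda (1 + \gamma(h))$, yielding $\gamma(h') \leq (1 + C''_E \lambda) \gamma(h) + C''_E \lambda$. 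For general $q \in [1,p]$, raise to the $q$-th power via the standard inequality $(a+b)^q \leq (1+\delta)a^q + K(q,\delta) b^q$ with $\delta = C''_E \lambda$, and note $\lambda^q \leq \lambda$ for $\lambda \leq 1$; this produces the required estimate with $c(\lambda) = O_{E,p}(\lambda) \in \LIP(1)$, uniform over $q \in [1,p]$.

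Part $(ii)$ uses the same algebraic decomposition but without a small parameter. Writing $\mu h + v - \Gamma = \mu(h-\Gamma) + (v + (\mu-1)\Gamma)$, applying $|a+b|^2 \leq 2|a|^2 + 2|b|^2$, and using $\Im(\mu h + v) \geq \mu \Im h$ give
$$\gamma(\mu h + v) \leq 2\mu\,\gamma(h) + \frac{C_E(|v|^2 + (\mu-1)^2)}{\mu\,\Im h}.$$
Applying once more $1/\Im h \leq C_E(1 + \gamma(h))$, together with $(\mu-1)^2/\mu \leq \mu + 1/\mu$, yields $\gamma(\mu h + v) \leq C(E)(\mu + 1/\mu)(1 + |v|^2)(1 + \gamma(h))$. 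Taking $p$-th powers and invoking $(1+x)^p \leq 2^{p-1}(1 + x^p)$ on the last two factors produces the claim. The principal technical point throughout is the conversion $1/\Im h \leq C_E(1 + \gamma(h))$, without which the $\lambda/\Im h$ term in $(i)$ would diverge as $\Im h \to 0$; every other step is routine algebra.
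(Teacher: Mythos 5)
Your proof is correct and takes essentially the same approach as the paper: both rest on the uniform bounds $c_1 \leq \Im\Gamma(z) \leq |\Gamma(z)| \leq c_2$ over $\dH_E$, combined with the dichotomy between $|h-\Gamma|$ large and $\Im h$ bounded below. The only organizational difference is that you distill this dichotomy into the single reusable inequality $1/\Im h \leq C_E(1 + \gamma(h))$ and apply it once per part, whereas the paper repeats the two-case argument inline each time; your packaging is marginally cleaner but not a different route.
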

\begin{proof}
For statement $(i)$ with $q=1$ we adapt \cite[Lemma 1]{keller2012absolutely}. There are two cases to consider. If $|h-\Gamma| > \Im \Gamma /2$, then
\begin{align*}
  \frac{\g{(1+u) h + v }}{\g{h}} &= \left(\frac{|(1+u) h + v -\Gamma|}{|h-\Gamma|}\right)^2 \frac{\Im h}{\Im ((1+u) h + v)}\\
  & \leq  \left(\frac{(1+\lambda)|h-\Gamma|+ \lambda ( 1 + |\Gamma|)}{|h-\Gamma|}\right)^2 \frac{1}{1-\lambda} \\
  &\leq \left(1+\lambda + \frac{\lambda(1 +|\Gamma|)}{\Im \Gamma/2} \right)^2 \frac{1}{1-\lambda}.
\end{align*}
Otherwise we have $|h-\Gamma| \leq \Im \Gamma /2$ and then $\Im h \geq \Im \Gamma/2$. We obtain similarly, using $2 ab \leq a^2 + b^2$ for real $a,b$ at the third line,
\begin{align*}
 \g{(1+u) h + v  } &\leq \frac{\left((1+\lambda)|h-\Gamma| + \lambda  ( 1 + | \Gamma|) \right)^2}{ ((1-\lambda)\Im h + \Im v )\Im \Gamma}\\
 & \leq \frac{(1+\lambda)^2|h-\Gamma|^2 + \lambda^2 (1+|\Gamma|)^2+2\lambda(1+\lambda)|h-\Gamma|(1+|\Gamma|)}{ (1-\lambda)\Im h\Im \Gamma} \\
  & \leq \frac{(1+\lambda)^2 + \lambda(1+\lambda)^2 }{1-\lambda}\frac{|h-\Gamma|^2}{\Im h \Im \Gamma}+\frac{\lambda^2 (1+|\Gamma|)^2+\lambda(1+|\Gamma|)^2}{ (1-\lambda)(\Im \Gamma)^2 /2} \\
  & = \frac{(1+\lambda)^3}{1-\lambda}\g{h}+\frac{\lambda(1+\lambda)(1+|\Gamma|)^2}{(1-\lambda)(\Im \Gamma)^2/2}.
\end{align*}
We have $\Gamma(z) = (- z + \sqrt{z^2 - 4} )/2$ where $\sqrt{\cdot}$ is the principal branch of the square root function (which maps $e^{i\theta}$ to $e^{i \theta/2}$ for $\theta \in [0,\pi]$). We deduce easily that $|\Gamma(z)| \leq 1$ and $\Im \Gamma(z)$ is lower bounded uniformly over $\dH_E$. Thus, for some function $c(\lambda)$ in $\LIP(1)$ we have 
\begin{equation}\label{eq:gammauhv}
\gamma \big( (1+u) h +  v \big) \leq (1+c(\lambda))\gamma(h) + c(\lambda).
\end{equation}
Let $q>1$, we claim that the general case follows up to a new function $\tilde{c}$. For any $x,c>0$, from a Taylor expansion, we have for some $0\leq \delta \leq c$,
$$((1+c)x+c)^q = (1+c)^qx^q+c(q-1)((1+c)x+\delta)^{q-1}.$$
Using that $a^{q-1} \leq a^q+1$, $(a+b)^{q} \leq 2^{q-1}(a^q+b^q)$ for $q\geq 1$, together with $0 \leq \delta \leq c$ we get 
$$((1+c)x+\delta)^{q-1} \leq 2^{q-1}(1+c)^qx^q+2^{q-1}c^q+1.$$
This leads to 
$$((1+c)x+c)^q \leq (1+c)^q(1+2^{q-1}(q-1)c)x^q+(2^{q-1}c^q+1)(q-1)c.$$
We get from \eqref{eq:gammauhv}
$$\gamma^q \big( (1+u) h +  v \big) \leq \left((1+c(\lambda))\gamma(h) + c(\lambda)\right)^q \leq (1+\tilde{c}(\lambda))\gamma(h)^q+\tilde{c}(\lambda),$$
where $\tilde{c}(\lambda)\in \LIP(1)$ is   
$$\tilde{c}= \max\left\{(1+c)^q(1+2^{q-1}(q-1)c) -1,(2^{q-1}c^q+1)(q-1)c \right\},$$
since we have that $c(\lambda)$ is in $\LIP(1)$.

For statement $(ii)$, we proceed similarly. If $|h-\Gamma| > \Im \Gamma /2$, then
\begin{align*}
\frac{\g{\mu h + v} }{\g{h}} & = \frac{\Im h}{\mu\Im h + \Im v}\left(\frac{|\mu (h-\Gamma) + v -(1-\mu)\Gamma|}{|h-\Gamma|}\right)^2 \\
& \leq \frac{1}{\mu}\left(\mu  + \frac{|v| + |1-\mu||\Gamma|}{\Im \Gamma /2}\right)^2 \\
& \leq 2\mu  + 16 \frac{|v|^2+|1-\mu|^2|\Gamma|^2}{\mu (\Im \Gamma)^2}.
\end{align*}
Where we used $(a+b)^2 \leq 2(a^2+b^2)$. Otherwise, we have $|h-\Gamma| \leq \Im \Gamma /2$ and $\Im h \geq \Im \Gamma/2$,
\begin{align*}
\gamma(\mu h + v) &\leq \frac{\left(\mu |h-\Gamma|+|v|+|1-\mu||\Gamma|\right)^2}{(\mu\Im h + \Im v)\Im \Gamma} \\
& \leq 2 \frac{\mu^2 |h-\Gamma|^2}{\mu \Im h \Im \Gamma} + 2\frac{(|v|+|1-\mu||\Gamma| )^2}{\mu (\Im \Gamma)^2 /2}\\
& \leq 2\mu \gamma(h) + 8 \frac{|v|^2+|1-\mu|^2|\Gamma|^2}{\mu (\Im \Gamma)^2}.
\end{align*}
We get for some constant $C$ depending on $E$,
$$\gamma \big( \mu h + v \big) \leq C (1+|v|)^{2}\left(\mu+ \frac{1}{\mu}\right)(\gamma(h) +1 ).$$
Using $(a+b)^{p}\leq 2^{p-1} (a^{p}+b^{p})$, this gives $(ii)$.
\end{proof}

\subsection{Asymmetric contraction}

In this subsection we derive our main technical tool. Although we noticed that $h \mapsto - (z+h)^{-1}$ is a contraction in $\dH$, it tends to an isometry as $z$ approaches the real axis and thus $\g{(z+h)^{-1}} \simeq \g{h}$. This prevent us to obtain contraction (\ref{eq:contraction}) for $\phi(h)=\hat{h}$, and this is precisely why the stability of absolutely continuous spectrum fails in one dimensional graphs such as $\mathbb{Z}$. We overcome this issue with the following observation. If instead we consider two points $h_r,h_l\in \dH$, then for some $\veps=\veps(E,p)> 0$ we show that
$$ \G{\frac{h_r + h_l}{2}} +\G{\frac{\hat{h}_r+h_l}{2}} \leq (1-\veps)\left( \G{h_r} + \G{h_l} \right),$$ 
as soon as $h_r$ and $h_l$ are far enough from $\Gamma$ for $\mathrm{d}_{\dH}$. The terms asymmetric is motivated from the fact that $h_l$ is fixed while the terms with subscripts $r$ varies on the left-hand side. 

More precisely, let $\lambda \in [0,1)$ and let $n\geq1$ be an integer. 
For $\bm h = (h_1,\ldots,h_n) \in \dH^n$, $u,v \in \dR \times \dH$ with $|u|,|v|\leq \lambda$, we define
\begin{equation}\label{def:hhat}
  \hat{h} = \hat{h}(\bm h,u,v,n) := -\left(z+\frac{1+u}{n}\sum_{i= 1}^n h_i + v \right)^{-1} \in \dH.   
\end{equation}
We define the random variable $h_r \in \dH$ with parameter $(\bm h,u,v,n)$ as follows:
\begin{equation}\label{def:hr}
h_r =
\begin{cases}
h_i &\textmd{ with } \dP(h_r = h_i) = 1/(2n), \, i = 1,\ldots,n , \\
\hat{h} &\textmd{ with } \dP(h_r = \hat{h}) = 1/2,
\end{cases}
\end{equation}
and fix a deterministic variable $h_l \in \dH$.

\begin{proposition}[Asymmetric uniform contraction]\label{prop:ACE} Let $0< E <2$ and $p \geq 2$. There exist constants $\veps,\lambda_0 $ in $(0,1)$ and a function $R\in \LIP(\lambda_0)$ depending only on $(E,p)$ such that, if $\lambda \leq \lambda_0$,
$$\dE \left[\gamma^{p} \left(\frac{h_l + h_r }{2}\right)\right] \leq (1-\veps) \left(\frac{\gamma^{p}(h_l) + \frac{1}{n}\sum_{i=1}^n\gamma^{p}(h_i)}{2}\right) + R(\lambda),$$
where the expectation is over the law of $h_r$ defined in \eqref{def:hr}. This holds uniformly in $n\geq 1$, $(\bm h,h_l) \in \dH^{n+1}$, $u,v \in \dR\times \dH$ with $|u|,|v| \leq \lambda$ and $z \in \dH_E$.
\end{proposition}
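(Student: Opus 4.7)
The approach is motivated by the informal dichotomy stated just before the Proposition. The Möbius map $h \mapsto \hat h = -(z+h)^{-1}$ fixes $\Gamma$ and, to first order at $\Gamma$, rotates tangent vectors by the angle $-2\arg(z+\Gamma)$. Using the explicit formula $\Gamma(z)=(-z+\sqrt{z^2-4})/2$, this rotation angle is uniformly bounded away from multiples of $\pi$ over $z \in \dH_E$ with $E<2$. Thus, given an arbitrary $h_l$, one cannot have all the tangent directions $(h_i-\Gamma)$ and $(\hat h-\Gamma)$ simultaneously aligned with $(h_l-\Gamma)$: either some of the $\arg(h_i-\Gamma)$ differ from $\arg(h_l-\Gamma)$, or they are all concentrated near $\arg(h_l-\Gamma)$, in which case $\arg(\bar h-\Gamma)$ is as well and the rotation makes $\arg(\hat h-\Gamma)$ far from $\arg(h_l-\Gamma)$.

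I would first expand using the definition of $h_r$:
\begin{equation*}
\dE\,\gamma^p\!\left(\tfrac{h_l+h_r}{2}\right)=\frac{1}{2n}\sum_{i=1}^n\gamma^p\!\left(\tfrac{h_l+h_i}{2}\right)+\tfrac{1}{2}\,\gamma^p\!\left(\tfrac{h_l+\hat h}{2}\right),
\end{equation*}
set $G_l=\gamma^p(h_l)$, $G=\frac{1}{n}\sum_i\gamma^p(h_i)$, and apply Lemma \ref{prop:propgamma}(i) followed by Lemma \ref{prop:per1}(i) to replace $\hat h$ by $\tilde h := -(z+\bar h)^{-1}$ with $\bar h = \frac{1}{n}\sum_i h_i$, at the cost of a $\LIP(\lambda_0)$ remainder absorbable into $R(\lambda)$. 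The core step is then to apply Lemma \ref{prop:propgamma}(iii) with $n=2$ to each averaged term, writing
\[
\gamma\!\left(\tfrac{h_l+h}{2}\right) = \tfrac12\bigl(q_l\gamma(h_l)+q_h\gamma(h)+2\cos\alpha_h\sqrt{q_lq_h\gamma(h_l)\gamma(h)}\bigr),\qquad h\in\{h_1,\dots,h_n,\tilde h\},
\]
with $q_l,q_h$ convex weights summing to $1$ and $\alpha_h = \arg\bigl((h_l-\Gamma)\overline{(h-\Gamma)}\bigr)$. Each $\cos\alpha_h$ strictly less than $1$ produces a quantitative strict improvement over the plain convexity bound via AM--GM together with Jensen's inequality for $t\mapsto t^p$.

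The conclusion follows by combining the two gains. On the one hand, the Möbius identity $\tilde h-\Gamma = (\bar h-\Gamma)/((z+\Gamma)(z+\bar h))$ gives $\alpha_{\tilde h} = \arg((h_l-\Gamma)\overline{(\bar h-\Gamma)})-\arg((z+\Gamma)(z+\bar h))$, so $\cos\alpha_{\tilde h}$ close to $1$ would force $\arg((h_l-\Gamma)\overline{(\bar h-\Gamma)})$ to coincide with the rotation $\arg((z+\Gamma)(z+\bar h))$, which by the first paragraph is uniformly bounded away from $0\bmod 2\pi$. On the other hand, $\bar h-\Gamma=\frac{1}{n}\sum_i(h_i-\Gamma)$ implies that if all $\cos\alpha_{h_i}$ are close to $1$ (i.e.\ all $\arg(h_i-\Gamma)$ close to $\arg(h_l-\Gamma)$), then so is $\arg(\bar h-\Gamma)$, and hence $\cos\alpha_{\tilde h}$ must instead be bounded away from $1$. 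Either alternative therefore produces a quantitative $(1-\varepsilon)$ saving in one of the two halves of the sum, which combined with the trivial convexity bound on the complementary half yields the Proposition.

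The main obstacle will be making this dichotomy genuinely uniform in all the parameters, so as to extract a single $\varepsilon=\varepsilon(E,p)>0$ and a remainder $R\in\LIP(\lambda_0)$ independent of $n$ and of the configurations of $h_l,\bm h$. The difficulty lies in carefully tracking how the strict-convexity gap in $\cos\alpha_h$ propagates through the $p$-th power when raised via Jensen's inequality, in handling heavily unbalanced situations (very different weights $q_l,q_h$, or very different magnitudes among $G_l$, $\gamma^p(\tilde h)$ and $G$), and in verifying that the rotation-induced lower bound on $\arg((z+\Gamma)(z+\bar h))$ survives the averaging $\bar h = \frac{1}{n}\sum_i h_i$ uniformly in the inputs.
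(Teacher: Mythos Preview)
Your heuristic is exactly the one the paper uses: the M\"obius map rotates $\bar h - \Gamma$ by an angle $\arg(\hat h\,\Gamma)$ bounded away from $0 \bmod \pi$ on $\dH_E$, so the $h_i - \Gamma$ and $\hat h - \Gamma$ cannot all be nearly aligned with $h_l - \Gamma$. However, as written the argument has a genuine gap, not merely a deferred technicality.

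The problem is in the sentence ``Either alternative therefore produces a quantitative $(1-\varepsilon)$ saving in one of the two halves of the sum, which combined with the trivial convexity bound on the complementary half yields the Proposition.'' A saving in one half is only useful if that half carries a definite fraction of the target $(\gamma_l^{p}+G)/2$. Concretely: if $\cos\alpha_{\tilde h}$ is bounded away from $1$, the resulting gain in $\gamma^{p}\!\bigl((h_l+\tilde h)/2\bigr)$ scales like $\gamma^{p/2}(\tilde h)\,\gamma_l^{p/2}$, which is negligible against $(\gamma_l^{p}+G)/2$ whenever $\gamma(\tilde h)\ll G^{1/p}$ or $\gamma_l^{p}$ and $G$ are of very different orders. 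Likewise, if some $\cos\alpha_{h_i}$ are far from $1$ but those particular $h_i$ carry small $\gamma(h_i)$, the saving evaporates. Your dichotomy also tacitly assumes $\bar h$ is bounded away from $\Gamma$: if $\bar h\approx\Gamma$ (which can occur by cancellation even when all $\gamma(h_i)$ are large), then $\tilde h\approx\Gamma$ and the rotation carries no information; and once one restores the perturbation $(u,v)$, the error angle $\alpha_e$ between $\hat h-\Gamma$ and $\tilde h-\Gamma$ blows up precisely in this regime.

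The paper closes exactly these holes by introducing the three ratios
\[
Q_s=\frac{\gamma_s^{p/2}}{\dE_s\gamma_i^{p/2}},\qquad
Q_2=\frac{\dE_s\gamma_i^{p/2}}{\sqrt{\dE_s\gamma_i^{p}}},\qquad
Q_l=\frac{2\sqrt{\gamma_l^{p}\,\dE_s\gamma_i^{p}}}{\gamma_l^{p}+\dE_s\gamma_i^{p}}\in[0,1],
\]
and running the dichotomy on $Q_sQ_2Q_l$. When this product is bounded away from $1$, a straight convexity computation (Lemma~\ref{prop:convexity}) already gives the contraction; this absorbs precisely the unbalanced cases above. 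When $Q_sQ_2Q_l$ is close to $1$, all three are: then $\gamma_l^{p}$ and $\dE_s\gamma_i^{p}$ are comparable ($Q_l$), the $\gamma_i^{p/2}$ are concentrated ($Q_2$), and $\gamma_s$ is comparable to $\dE_s\gamma_i$ ($Q_s$), which in particular lower-bounds $|\bar h-\Gamma|$ and makes both the rotation and the perturbation error $\alpha_e$ controllable (Lemma~\ref{prop:non-alignment}), after which the non-alignment is converted into a contraction via Lemma~\ref{prop:contraction}. Your sketch identifies the rotation mechanism correctly, but the three obstacles you list at the end are not peripheral: they are exactly where this balance machinery is required, and without it the argument does not produce a uniform~$\varepsilon$.
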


Before going into the proof, we introduce some notation. For $f \in \dC^n$, let $\dEs f$ and $\VARs f$ denote its average and variance, i.e $$\dEs f  = \frac{1}{n} \sum_{i=1}^n f_i \quad \hbox{ and } \quad \VARs f  = \frac{1}{n} \sum_{i=1}^n |f_i - \dEs f |^2.$$ 
We set $h_s = \dEs \bm h$. 
We simply write $\gamma_x$ for $\g{h_x}$ where $x\in \{1,\ldots,n\} \cup \{r,l,s\}$ and $\hat{\gamma}=\gamma(\hat{h})$.  For example, with our notation, we have 
$$
\gamma_s = \gamma(h_s) = \gamma \left(\frac 1 n \sum_i h_i \right) \quad \hbox{ and } \quad \dE_s \gamma_i = \frac 1 n \sum_{i=1}^n \gamma_i = \frac 1 n \sum_{i=1}^n \gamma(h_i).
$$
We highlight the fact that only $h_r$ and $\gamma_r$ are random. We also introduce the following quantities in $[0,1]$, with the convention $0/0 = 0$:
$$Q_s = \frac{\gamma_s^{p/2}}{\dEs \gamma_i^{p/2}}, \qquad Q_2 = \frac{\dEs \gamma_i^{p/2}}{\sqrt{\dEs \gamma_i^{p}}}, \qquad Q_l = \frac{2\sqrt{\gamma_l^{p} \dEs\gamma^{p}}}{\gamma_l^{p} + \dEs \gamma_i^{p}}.$$
We also introduce two quantities $\beta_l,\beta_2$ related to the $Q$’s that measure how $( \gamma_1,\ldots,\gamma_n)$ and $\gamma_l$ concentrate around the same value.

\begin{lemma}\label{prop:beta} Let $\beta_2 = \sqrt{1 - Q_2^2}$ and $ \beta_l = \sqrt{1 - Q_l^2}$. We have
\begin{equation}\label{eq:B2}
    \VARs \gamma_i^{p/2} \leq \beta_2^2 \dEs \gamma_i^{p}.
    \end{equation}
Furthermore, for $a  = \gamma_l^{p}$ or $a =\dEs \gamma_i^{p}$,
\begin{equation}\label{eq:Bl}
(1-  \beta_l) \frac{\gamma_l^{p} + \dEs \gamma_i^{p}}{2} \leq a \leq  (1+\beta_l) \frac{\gamma_l^{p} + \dEs \gamma_i^{p}}{2}. 
\end{equation}
\end{lemma}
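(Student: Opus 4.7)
The plan is to verify both inequalities by direct algebraic manipulation of the definitions; in fact, both will turn out to be identities (or to reduce to identities up to the two natural orderings of $\gamma_l^{p}$ and $\dEs \gamma_i^{p}$). No geometric argument or application of an earlier lemma is required.

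For \eqref{eq:B2}, I would invoke the elementary identity $\VARs f = \dEs f^2 - (\dEs f)^2$ with $f_i = \gamma_i^{p/2}$, which gives
$$
\VARs \gamma_i^{p/2} \;=\; \dEs \gamma_i^{p} - (\dEs \gamma_i^{p/2})^2.
$$
Unwinding the definition of $Q_2$ yields $(\dEs \gamma_i^{p/2})^2 = Q_2^{2}\,\dEs \gamma_i^{p}$, so the right-hand side equals $(1-Q_2^{2})\dEs \gamma_i^{p} = \beta_2^{2}\,\dEs \gamma_i^{p}$. This is actually an equality, and the convention $0/0 = 0$ handles the degenerate case $\dEs \gamma_i^{p}=0$ (then $\gamma_i=0$ for all $i$ and both sides vanish).

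For \eqref{eq:Bl}, write $a_1 = \gamma_l^{p}$ and $a_2 = \dEs \gamma_i^{p}$, so that $Q_l = 2\sqrt{a_1 a_2}/(a_1+a_2)$. Then a one-line computation gives
$$
1 - Q_l^{2} \;=\; \frac{(a_1+a_2)^{2} - 4 a_1 a_2}{(a_1+a_2)^{2}} \;=\; \frac{(a_1-a_2)^{2}}{(a_1+a_2)^{2}},
$$
so $\beta_l = |a_1 - a_2|/(a_1+a_2)$ (with the convention $0/0 = 0$ covering the case $a_1+a_2=0$, which makes everything trivial). Therefore
$$
(1 \pm \beta_l)\,\frac{a_1 + a_2}{2} \;=\; \frac{a_1+a_2}{2} \pm \frac{|a_1 - a_2|}{2} \;=\; \begin{cases} \max(a_1,a_2), \\ \min(a_1,a_2). \end{cases}
$$
Since $\min(a_1,a_2) \leq a \leq \max(a_1,a_2)$ for $a \in \{a_1,a_2\}$, the two-sided bound in \eqref{eq:Bl} follows immediately.

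There is no real obstacle here: both statements are restatements of the definitions once one observes that $Q_2$ is the cosine of the angle between $(\gamma_i^{p/2})$ and the constant vector, and $Q_l$ is the ratio of the geometric to the arithmetic mean of $(a_1,a_2)$. The only care needed is to spell out the degenerate cases ($a_1+a_2=0$ or $\dEs \gamma_i^{p}=0$), which are handled by the stated convention $0/0=0$.
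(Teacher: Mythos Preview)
Your proof is correct and follows essentially the same route as the paper: the paper calls \eqref{eq:B2} ``straightforward'' (it is exactly the variance identity you wrote out) and for \eqref{eq:Bl} it performs the same computation $1-Q_l^2=(a-b)^2/(a+b)^2$ to identify $(1\pm\beta_l)(a+b)/2$ with $\max\{a,b\}$ and $\min\{a,b\}$. Your treatment of the degenerate cases via the $0/0=0$ convention is a minor addition, but otherwise the arguments coincide.
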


\begin{proof} Equation (\ref{eq:B2}) is straightforward. To prove Equation (\ref{eq:Bl}), we note that $Q_l$ is of the form $2\sqrt{ab}/(a+b)$. Hence,
\begin{align*}
    Q_l^2 & = \frac{(a+b)^2 - (a-b)^2}{(a+b)^2} = 1-\left(\frac{a-b}{a+b}\right)^2 .
    \end{align*}
    We deduce that $   |a-b| = (a+b)\sqrt{1-Q_l^2}$ and thus
    \begin{align*}
    \max\{a,b\} &= \frac{a+b}{2} + \frac{|a-b|}{2} = \frac{a+b}{2}\left(1+\sqrt{1-Q_l^2}\right) = (1- \beta_l) \frac{a+b}{2}.\\ \intertext{Similarly,}
    \min\{a,b\} &= \frac{a+b}{2}\left(1-\sqrt{1-Q_l^2}\right) = (1- \beta_l) \frac{a+b}{2}, 
\end{align*}
as requested.
\end{proof}

Our next lemma uses the convexity to express $\dE \G{ (h_r+h_l)/2}$ in terms of the $Q$'s.
\begin{lemma}[Convexity]\label{prop:convexity} 
Let $0< E < 2$, $p \geq 2$ and $c (\cdot) \in \LIP(1)$ be as in Lemma  \ref{prop:per1}. We have
\begin{equation*}
    \dE \G{\frac{h_r+h_l}{2}} \leq (1+2 c(\lambda)) \left(\frac{3+Q_sQ_2Q_l}{4}\right) \frac{\gamma_l^{p} + \dEs \gamma_i^{p}}{2}  + c(\lambda).
\end{equation*}
\end{lemma}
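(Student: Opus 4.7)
The plan is to split the expectation defining $\dE\gamma^p((h_r+h_l)/2)$ into the atom at $\hat h$ (weight $1/2$) and the atoms at the $h_i$ (each of weight $1/(2n)$), and in each piece to upgrade the crude convexity bound $\gamma((h+h_l)/2)\leq(\gamma(h)+\gamma_l)/2$ to the square-root level before squaring. This is what will produce the cross-term $\gamma^{p/2}(\cdot)\gamma_l^{p/2}$ that ultimately gets identified with the saving factor $Q_sQ_2Q_l$.

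The key elementary step goes as follows. Since $\gamma$ is convex by Lemma~\ref{prop:propgamma}(ii), and $x\mapsto x^{p/2}$ is convex and increasing on $[0,\infty)$ for $p\geq 2$, one has $\gamma^{p/2}((h+h_l)/2)\leq (\gamma^{p/2}(h)+\gamma_l^{p/2})/2$ for every $h\in\dH$, so squaring gives
\begin{equation*}
\gamma^p\!\left(\frac{h+h_l}{2}\right)\ \leq\ \frac{\gamma^p(h)+2\gamma^{p/2}(h)\gamma_l^{p/2}+\gamma_l^p}{4}.
\end{equation*}
Applying this to $h=h_i$, averaging over $i$, and using the identities $\dEs\gamma_i^{p/2}=Q_2\sqrt{\dEs\gamma_i^p}$ together with $2\sqrt{\gamma_l^p\,\dEs\gamma_i^p}=Q_l(\gamma_l^p+\dEs\gamma_i^p)$ to repackage the resulting cross-term, I obtain
\begin{equation*}
\frac{1}{2n}\sum_{i=1}^n\gamma^p\!\left(\frac{h_i+h_l}{2}\right)\ \leq\ \frac{(1+Q_2Q_l)(\gamma_l^p+\dEs\gamma_i^p)}{8}.
\end{equation*}

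For the $\hat h$-term I apply the same squared inequality with $h=\hat h$, then dominate $\gamma^q(\hat h)$ by $(1+c(\lambda))\gamma^q(h_s)+c(\lambda)$ for both $q=p$ and $q=p/2$, by combining Lemma~\ref{prop:propgamma}(i) (to discard the inversion $-(z+\cdot)^{-1}$) with Lemma~\ref{prop:per1}(i) (to discard the perturbation $(1+u)\cdot+v$). Using $\gamma_s^p\leq \dEs\gamma_i^p$ (Jensen for $\gamma^p$) and the identity $2\gamma_s^{p/2}\gamma_l^{p/2}=Q_sQ_2Q_l(\gamma_l^p+\dEs\gamma_i^p)$, and absorbing the residual $2c(\lambda)\gamma_l^{p/2}$ via the elementary bound $2\sqrt{x}\leq 1+x$, I expect to reach
\begin{equation*}
\frac{1}{2}\gamma^p\!\left(\frac{\hat h+h_l}{2}\right)\ \leq\ \frac{(1+2c(\lambda))(1+Q_sQ_2Q_l)(\gamma_l^p+\dEs\gamma_i^p)}{8}+\frac{c(\lambda)}{4}.
\end{equation*}

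Summing the two estimates, the claim reduces to the scalar inequality $(1+2c)(1+Q_sQ_2Q_l)+(1+Q_2Q_l)\leq(1+2c)(3+Q_sQ_2Q_l)$, whose gap equals $1-Q_2Q_l+4c\geq 0$ since $Q_2,Q_l\in[0,1]$. The main obstacle is the bookkeeping of perturbations: the factor $(1+c(\lambda))$ must be pushed through the squaring so that exactly $(1+2c(\lambda))$ appears in front of the leading term, which is why I invoke Lemma~\ref{prop:per1}(i) \emph{separately} at exponents $p/2$ and $p$ rather than at exponent $p$ alone followed by a square root.
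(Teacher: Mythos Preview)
Your proof is correct and follows essentially the same route as the paper: square the convexity bound $\gamma^{p/2}((h+h_l)/2)\le(\gamma^{p/2}(h)+\gamma_l^{p/2})/2$, control $\hat\gamma^{q}$ for $q\in\{p/2,p\}$ via Lemma~\ref{prop:propgamma}(i) combined with Lemma~\ref{prop:per1}(i), and repackage the cross-terms through the $Q$ identities. The only cosmetic difference is that you split into the $h_i$ and $\hat h$ atoms before expanding the square (which yields the slightly sharper factor $(1+Q_2Q_l)$ on the $h_i$ piece, a gain you then discard in your final scalar inequality), whereas the paper expands first and bounds $\dE\gamma_r^{p}$ and $\gamma_l^{p/2}\dE\gamma_r^{p/2}$ separately.
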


\begin{proof} From Lemma \ref{prop:propgamma}$(i)$, we have $\hat{\gamma} = \g{ - 1/(z+(1+u)h_s+ v) } \leq \g{(1+u)h_s+ v}$ which combined with Lemma \ref{prop:per1}$(i)$ leads to  $\hat{\gamma}^q \leq (1+c(\lambda))\gamma_s^q + c(\lambda)$, $q \in \{ p/2,p\}$. From the definition of the $Q$'s we have 
$$\gamma_s^{p/2} = Q_sQ_2 \sqrt{\dEs \gamma_i^{p}} \quad \hbox{ and } \quad \gamma_s^{p/2} \gamma_l^{p/2} = Q_sQ_2Q_l \frac{\gamma_l^{p} + \dEs \gamma_i^{p}}{2}.$$ We get the two bounds:
\begin{align}
    \dE \gamma_r^{p} &\leq \frac{\dEs \gamma_i^{p}}{2} 
    + \frac{(1+c(\lambda))Q_s^2Q_2^2\dEs \gamma_i^{p} + c(\lambda)}{2} \leq (1+c(\lambda))\dEs \gamma_i^{p} +\frac{ c(\lambda)}{2} \label{eq:Egrr}
    \end{align}
    and 
    \begin{align}
    \gamma_l^{p/2}\dE \gamma_r^{p/2} &\leq \gamma_l^{p/2} \frac{ \dEs \gamma_i^{p/2}}{2}+\gamma_l^{p/2}\frac{(1+c(\lambda))\gamma_s^{p/2}+ c(\lambda) }{2} \nonumber \\
    & \leq (1+c(\lambda))\left(\frac{1+Q_sQ_2Q_l}{2}\right) \frac{\gamma_l^{p} + \dEs \gamma_i^{p}}{2}+ \frac{c(\lambda)(\gamma_l^{p}+1)}{4}, \label{eq:Egrr2}
\end{align}
where we used the fact that the $Q$’s are less than one, the AM-GM inequality and $2 \gamma_l^{p/2} \leq \gamma_l^{p} +1$. Now from the convexity of $x \mapsto x^{p/2}$, Lemma \ref{prop:propgamma}$(ii)$, we have 
\begin{align*}
  &\dE \gamma^{p} \left(\frac{h_r+h_l}{2}\right) \leq \dE \left(\frac{\gamma_l^{p/2} + \gamma_r^{p/2}}{2}\right)^{2} \leq  \frac{1}{2}\frac{\gamma_l^{p} +\dE \gamma_r^{p}}{2} + \frac{1}{2} \gamma_l^{p/2} \dE\gamma_r^{p/2} \\ 
  &\leq \frac{1}{2}\left((1+c(\lambda))\frac{\gamma_l^{p} + \dEs \gamma_i^{p}}{2} +\frac{c(\lambda)}{4} \right)+\frac{1}{2}\left((1+c(\lambda))\left(\frac{1+Q_sQ_2Q_l}{2}\right)\frac{\gamma_l^{p} + \dEs \gamma_i^{p}}{2} +\frac{c(\lambda)(\gamma_l^{p} +1 )}{4} \right)\\  
&\leq \left(  (1+c(\lambda)) \left( \frac{ 3  + Q_sQ_2Q_l}{4} \right) + \frac{c(\lambda)}{2} \right) \frac{\gamma_l^{p} + \dEs \gamma_i^{p}}{2} + \frac{c(\lambda)}{4}.
\end{align*}
The conclusion follows. 
\end{proof}

Lemma \ref{prop:convexity} already implies that if $Q_s Q_2 Q_l \leq 1-\veps$ then  the conclusion of Proposition \ref{prop:ACE} holds. We will now argue that if $Q_s Q_2 Q_l$ is large then a contraction occurs provided that $(\gamma_l^{p} + \dEs \gamma_i^{p})/2$ is large enough. This source of contraction will come from a default of alignment of $h_r- \Gamma$ and $h_l - \Gamma_l$ which we now explain precisely.  With the convention that $\arg 0 = \pi/2$, let $$
\alpha_{rl} = \arg{(h_r-\Gamma)\overline{(h_l-\Gamma)}}.$$
For $\alpha \in \mathbb{S}^1$, we set $\COS \hspace{-2pt} \alpha = \max\{\cos \alpha,0\}$.

\begin{lemma}[Non-alignment]\label{prop:non-alignment} 
Let $0< E < 2$, $p \geq 2$. There exist $\delta,\lambda_0>0$ and $R_0 \in \LIP(\lambda_0)$ depending only on $(E,p)$ such that if $\lambda < \lambda_0$ the following holds:
\begin{equation*}
\textmd{ if } Q_sQ_2Q_l \geq 1/2 \textmd{ and } \frac{\gamma_l^{p} + \dEs \gamma_i^{p}}{2} \geq R_0(\lambda) \; \textmd{ then } \; \dE \COS \alpha_{rl} \leq 1-\delta.
\end{equation*}
\end{lemma}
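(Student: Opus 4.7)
The plan is to argue by contradiction. Suppose $\dE \COS \alpha_{rl} > 1-\delta$ for some small $\delta=\delta(E,p)$ to be determined. Since $\dE \COS \alpha_{rl} = \frac{1}{2}\dEs \COS \alpha_{il} + \frac{1}{2}\COS \alpha_{\hat h l}$ and each summand lies in $[0,1/2]$, this forces both (a) $\dEs \COS \alpha_{il} > 1-2\delta$ and (b) $\COS \alpha_{\hat h l} > 1-2\delta$. The key geometric input is the identity $\phi(h)-\Gamma = -\Gamma(h-\Gamma)/(z+h)$ for $\phi(h) = -(z+h)^{-1}$, which follows from $z+\Gamma = -1/\Gamma$. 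It yields the ``Euclidean rotation angle''
\[
\eta(h,z) := \arg(\phi(h)-\Gamma) - \arg(h-\Gamma) \equiv \arg\Gamma - \pi - \arg(z+h) \pmod{2\pi}.
\]
For $z \in \dH_E$ with $E<2$, the explicit formula $\Gamma(z) = (-z+\sqrt{z^2-4})/2$ places $\arg\Gamma(z)$ in a compact subinterval $[\delta_0, \pi-\delta_0]$ of $(0,\pi)$ for some $\delta_0 = \delta_0(E)>0$. Since $\arg(z+h)\in(0,\pi)$ for all $h\in\dH$, the residue $\eta(h,z)\bmod 2\pi$ lies in the interval $(\arg\Gamma,\arg\Gamma+\pi)$, whose circular distance from $0$ is at least $\delta_0$, uniformly in $h\in\dH$ and $z\in\dH_E$. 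This uniform non-alignment is the source of the contradiction.

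Next I would handle the perturbation $g := (1+u)h_s + v$. From $g-\Gamma = (1+u)(h_s-\Gamma) + (u\Gamma+v)$ one has $|\arg(g-\Gamma)-\arg(h_s-\Gamma)| \leq 2\lambda/|h_s-\Gamma|$. A lower bound $|h_s-\Gamma|\geq c_0(E)>0$ comes from a case split on $\Im h_s$: either $\Im h_s\leq \Im\Gamma/2$, so $|h_s-\Gamma|\geq |\Im h_s-\Im\Gamma|\geq \Im\Gamma/2$; or $\Im h_s\geq \Im\Gamma/2$, so $|h_s-\Gamma|^2 = \gamma_s \Im h_s \Im\Gamma \geq \gamma_s (\Im\Gamma)^2/2$, which suffices once $\gamma_s\geq 2$ (and one verifies $\Im\Gamma(z)\geq c_1(E)>0$ on $\dH_E$). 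The identity $\gamma_s = (Q_sQ_2)^{2/p}(\dEs \gamma_i^{p})^{1/p}$ combined with $Q_sQ_2\geq 1/2$ and $(\gamma_l^p+\dEs \gamma_i^p)/2\geq R_0(\lambda)$ makes $\gamma_s\geq 2$ once $R_0(\lambda)$ is large enough. Thus the perturbation contributes $O(\lambda)$ to the argument, and
\[
\arg(\hat h - \Gamma) - \arg(h_s - \Gamma) \equiv \eta(g,z) + O(\lambda) \pmod{2\pi},
\]
so that its circular distance from $0$ is at least $\delta_0 - O(\lambda) \geq \delta_0/2$ for $\lambda$ below some $\lambda_0=\lambda_0(E)$.

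Finally I would use (a) and the remaining $Q$-conditions to show that $\alpha_{sl}$ is close to $0\bmod 2\pi$. The pointwise inequality $\cos\alpha\geq 2\COS\alpha-1$ combined with (a) yields $\dEs \cos\alpha_{il}>1-4\delta$, so the unit vectors $e_i:=(h_i-\Gamma)/|h_i-\Gamma|$ cluster around $e_l:=(h_l-\Gamma)/|h_l-\Gamma|$ on weighted average. The bounds $Q_2,Q_l\geq 1/2$ (extracted from $Q_sQ_2Q_l\geq 1/2$) control the dispersion of the magnitudes $|h_i-\Gamma|$ and $|h_l-\Gamma|$ via Lemma~\ref{prop:beta}, allowing one to conclude that the weighted sum $h_s-\Gamma = \dEs |h_i-\Gamma|\, e_i$ has Euclidean direction within $O(\sqrt\delta)$ of $e_l$; that is, $\COS\alpha_{sl}\geq 1 - C\delta$ for some $C=C(E,p)$. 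Combining with the previous step, $\alpha_{\hat h l} = \alpha_{sl} + \eta(g,z) + O(\lambda)$ has circular distance from $0$ at least $\delta_0/4$ once $\delta,\lambda$ are small, so $\COS\alpha_{\hat h l}\leq\cos(\delta_0/4)<1-2\delta$ for $\delta<(1-\cos(\delta_0/4))/2$, contradicting (b). The main obstacle is this last step: carefully extracting the direction concentration of $h_s-\Gamma$ from the only moderately strong conditions $Q_2, Q_l \geq 1/2$, which control only the dispersion of $\gamma_i^{p/2}$ and the ratio $\gamma_l^{p}/\dEs\gamma_i^{p}$, and must be combined with the $\gamma$-identity $|h-\Gamma|^2 = \gamma(h)\,\Im h\,\Im \Gamma$ to pass from the $\gamma_i$'s to the relevant radii.
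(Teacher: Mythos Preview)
Your overall architecture matches the paper's direct argument (the contradiction framing is cosmetic): split $\dE\COS\alpha_{rl}$ into the $h_i$-part and the $\hat h$-part, use the rotation identity $\hat h-\Gamma=-\Gamma(g-\Gamma)/(z+g)$ together with $\arg\Gamma\in(\theta_0,\pi-\theta_0)$ to force $\hat\alpha$ away from $0$ once $\alpha_s$ and the perturbation angle are small. There are, however, two genuine problems.

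\textbf{The threshold $R_0$ is not in $\LIP$.} You aim for a \emph{fixed} lower bound $|h_s-\Gamma|\geq c_0(E)$, obtained via $\gamma_s\geq 2$, and then say ``once $R_0(\lambda)$ is large enough''. But the statement requires $R_0\in\LIP(\lambda_0)$, i.e.\ $R_0(\lambda)\leq C\lambda$, so $R_0$ cannot be bounded below by a positive constant. The fix is that you only need $|h_s-\Gamma|\gtrsim\lambda$ (so that the perturbation angle $\lesssim\lambda/|h_s-\Gamma|$ is small), hence $\gamma_s\gtrsim\lambda^2$ suffices, and this gives $R_0(\lambda)\sim\lambda^{2p}\in\LIP$. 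The paper does exactly this via the function $\psi^{-1}(t)\sim t^2$.

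\textbf{The $\alpha_{sl}$ step does not go through as sketched.} You propose to deduce direction concentration of $h_s-\Gamma$ from $Q_2,Q_l\geq 1/2$ and Lemma~\ref{prop:beta}, but those only control the dispersion of $\gamma_i^{p/2}$ and the ratio $\gamma_l^{p}/\dEs\gamma_i^{p}$. The radii $|h_i-\Gamma|=\sqrt{\gamma_i\,\Im h_i\,\Im\Gamma}$ also involve $\Im h_i$, which is entirely uncontrolled by the $Q$'s; a single $h_i$ with large $\Im h_i$ and bad angle can dominate $h_s-\Gamma$. The paper sidesteps this by writing the exact formula
\[
\cos\alpha_s=\sqrt{\frac{n}{\gamma_s}}\,\dEs\!\big[\sqrt{q_i\gamma_i}\,\cos\alpha_i\big],\qquad q_i=\frac{\Im h_i}{\sum_j\Im h_j},
\]
using Lemma~\ref{prop:propgamma}(iii) to get $\sqrt{n/\gamma_s}\,\dEs\sqrt{q_i\gamma_i}\geq 1$, and then bounding the bad indices via H\"older with exponents $\tfrac12+\tfrac{1}{2p}+\tfrac1q=1$ together with the normalization $\dEs q_i=1/n$. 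This absorbs the unknown $\Im h_i$ into the factor $(\dEs q_i)^{1/2}=n^{-1/2}$, and uses only $Q_sQ_2\geq 1/2$ and Markov on $1-\COS\alpha_i$; the conditions $Q_2,Q_l\geq 1/2$ individually play no role here.
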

\begin{proof}
If $h_s = \Gamma$ or $h_l = \Gamma$ then $\alpha_{rl} = \pi/2$ with probability at least $1/2$ and the statement follows with $\delta \leq 1/2$. We thus assume $h_s \ne \Gamma$ and $h_l \ne \Gamma$. Let $1 \le i \le n$,  $\alpha_{i},\alpha_{s},\hat \alpha$ be defined in the same way that $\alpha_{rl}$ with $h_i,h_s,\hat h$ instead of $h_r$. Using the fact that $-\hat{h}^{-1}+\Gamma^{-1}=(1+u)h_s +  v - \Gamma$, we get, 
$$(\hat{h}-\Gamma)  = (1+u)\hat{h}\Gamma(h_s-\Gamma) \left(1+ \frac{v+u\Gamma}{(1+u)(h_s-\Gamma)}\right).$$
By setting $\theta= \arg \hat{h}\Gamma$ and $\alpha_e = \arg  \{1+(v+u\Gamma)((1+u)(h_s-\Gamma))^{-1}\}$, the above relation multiplied both sides by $\overline{(h_l-\Gamma)}$ translates into
\begin{equation}
\label{eq:alphatheta}
\hat{\alpha} = \theta + \alpha_{s} + \alpha_e,
\end{equation}
where the sum must be seen in $\mathbb{S}^1$. We take the argument in $(-\pi,\pi]$. Since $0 < E < 2$, there exists $0 < \theta_0 \le \pi/2$ such that for all $z \in \dH_E$, 
$$\arg \Gamma(z) \in (\theta_0 , \pi-\theta_0).$$ In particular, since $\hat h \in \dH$, we find $|\theta| \geq \theta_0$. We will show that for some $\delta_s>0$ to be fixed later on, under the additional hypothesis that $\dEs \COS \alpha_{i} \geq 1-\delta_s$, we have 
\begin{equation}
\label{eq:alphase}
\cos \alpha_{s} \geq \cos(\theta_0/4)\quad \hbox{ and } \quad \cos \alpha_{e} \geq \cos(\theta_0/4).
\end{equation}
This implies from \eqref{eq:alphatheta} that $|\hat{\alpha}| \geq \theta - 2 \theta_0/4 \geq \theta_0/2$. Considering the two cases $\dEs \COS \alpha_{i} \geq 1-\delta_s$ and $\dEs \COS \alpha_{i} < 1-\delta_s$, we obtain
$$\dE \COS \alpha_{rl} = \frac{1}{2} \COS \hat \alpha + \frac 1 2 \dEs \COS\alpha_i  \leq \frac{1}{2} + \frac{\max{\{1-\delta_s,\cos \theta_{0}/2\}}}{2} = 1-\delta.$$
We get the statement of Lemma \ref{prop:non-alignment} for this choice of $\delta$. It thus remain to prove that \eqref{eq:alphase} holds if $\dEs \COS \alpha_{i} \geq 1-\delta_s$.

\noindent{\em{Bound on $\alpha_{s}$. }} We have
\begin{align*}
\cos \alpha_{s} & = \frac{\Re \left[(h_s-\Gamma)\frac{\overline{(h_l-\Gamma)}}{|h_l-\Gamma|} \right] }{|h_s - \Gamma|} = \frac{1}{n} \sum_{i=1}^n \frac{ |h_i-\Gamma|\cos \alpha_{i}}{|h_s - \Gamma|} = \frac{1}{n}\sum_{i =1}^{n}\frac{  \sqrt{\gamma_i \Im h_i}\cos \alpha_{i}}{\sqrt{\gamma_s \Im h_s }}  = \sqrt{\frac{n}{\gamma_s}} \dE_s \sqrt{q_i\gamma_i} \cos\alpha_{i},
\end{align*}
where we set $q_i = \Im h_i /(\sum \Im h_j)$ already appeared in Lemma \ref{prop:propgamma}$(iii)$. From this Lemma we get 
$$\left(\sqrt{\frac{n}{\gamma_s}} \dE_s \sqrt{q_i\gamma_i}\right)^2 \geq \frac{n}{\gamma_s} \frac{1}{n^2} \sum_{i,j} \cos \alpha_{ij}\sqrt{q_iq_j \gamma_i\gamma_j} = 1.$$
Let $c = \cos^2(\theta_0/8) =\frac{1+\cos(\theta_0/4)}{2} > \cos(\theta_0/4)$ and suppose $\dEs \COS \alpha_{l}\geq 1-\delta_s$. Using that $\dEs q_i = 1/n$ and H\"older inequality with $q$ such that $\frac{1}{2}+\frac{1}{2p}+\frac{1}{q}=1$, we get 
\begin{align*}
  \sqrt{\frac{n}{\gamma_s}} \dE_s \sqrt{q_i\gamma_i\mathbf{1}_{\{\cos \alpha_{i}<c\}}}  &\leq   \sqrt{\frac{n}{\gamma_s}} \left(\dE_s q_i \right)^{1/2} \left(\dEs\gamma^{p}\right)^{1/2p}\dP_s (\cos \alpha_{i}<c)^{1/q}  
  \\
  &= \left(\frac{\sqrt{\dEs\gamma^{p}}}{\gamma_s^{p/2}}\right)^{1/p} \dP_s (\cos \alpha_{i}<c)^{1/q}\\
  &= \frac{1}{(Q_sQ_2)^{1/p}}\dP_s (1 - \COS \alpha_{i} > 1 -c )^{1/q}\\
  &\leq 2^{1/p} \left(\frac{\delta_s}{\sin^2 (\theta_0/8)}\right)^{1/q},
\end{align*}
where at the last line, we have used Markov inequality and the assumption $Q_sQ_2 \geq Q_sQ_2Q_l \geq 1/2$. We now use the bound $\cos \alpha_{i} \geq c - 2\mathbf{1}_{\{\cos \alpha_{i}<c\}}$ and obtain
$$\cos \alpha_{s} \geq \frac{1+\cos(\theta_0/4)}{2} -2^{1+1/p} \left(\frac{\delta_s}{\sin^2 (\theta_0/8)}\right)^{1/q} \geq \cos(\theta_0/4),$$
provided that $\delta_s$ is small enough. 

\noindent{\em{Bound on $\alpha_{e}$. }} Using $\Re(1 + z) / |1 + z | \geq 1 - 2 |z|$, $|\Gamma| \leq 1$ and $|u|,|v| \leq \lambda$, we have
$$\cos \alpha_e = \frac{\Re \left(1+\frac{v+u\Gamma}{(1+u)(h_s-\Gamma)}\right)}{\left |1+\frac{v+u\Gamma}{(1+u)(h_s-\Gamma)}\right|} \geq 1 - 2 \left|\frac{v+u\Gamma}{(1+u)(h_s-\Gamma)}\right| \geq 1 -   \frac{8\lambda}{|h_s-\Gamma|},$$
where we assumed without loss of generality that $\lambda \leq \lambda_0 \leq 1/2$ in the last inequality. We want to lower bound $|h_s-\Gamma|$ in terms of $\gamma_s= \gamma(h_s)$. This is easily done from the following observation. Let $r\geq 0$ and define 
$$\psi(r)=\inf \{|h-\Gamma| :   h \in \mathbb{H},\, \g{h}\geq r \}.$$
A drawing shows that this infimum is attained at the unique $h^*$ such that $\Re h^* = \Re \Gamma$, $0<\Im h^* < \Im \Gamma$ and $\g{h^*}=r$. For this configuration, we have $\Im \Gamma = \Im h^* + |h^*-\Gamma|$. Since $\psi(r)=|h^*-\Gamma|$, we get 
$$r = \g{h^*} = \frac{|h^*-\Gamma|}{\Im\Gamma \Im h^*} = \frac{\psi^2(r)}{\Im \Gamma (\Im \Gamma-\psi(r))}.$$
Thus $\psi^{-1}$ is defined on $[0,\Im \Gamma)$ and $\psi^{-1}(t) = t^2 / (\Im \Gamma (\Im \Gamma-t))$. 
As $Q_sQ_2 \geq 1/2$ and $Q_l\geq 1/2$, from \eqref{eq:Bl} in Lemma \ref{prop:beta}, we have $1-\beta_l \geq 1/8$ and
$$\gamma_s^{p} = (Q_s Q_2)^2 \dEs \gamma_i^{p} \geq \frac{1}{32}\frac{\gamma_l^{p} + \dEs \gamma_i^{p}}{2}.$$
Let $R_0(\lambda)= 32 \left(\psi^{-1} \left( 8 \lambda / (1-\cos(\theta_0/4)) \right)\right)^{p}$ which is defined for any $z\in \dH_E$ as soon as
$$\lambda < \lambda_0 := \frac{1-\cos(\theta_0/4)}{8} \min_{z\in \dH_E} \Im \Gamma(z).$$
We have $\cos \alpha_e \geq \cos(\theta_0/4)$ as soon as $(\gamma_l^{p} + \dEs \gamma_i^{p})/2 \geq R_0(\lambda)$. Since $\psi^{-1}(t)\sim t^2$ as $t$ goes to $0$, we have $R_0 \in  \LIP(\lambda_0)$. This concludes the proof.
\end{proof}

The next lemma expresses the contraction in terms of $\COS\alpha_{rl}$. Recall the definition $\beta_2,\beta_l$ from Lemma \ref{prop:beta}.
\begin{lemma}[Contraction from non-alignment]\label{prop:contraction}
Let $0< E < 2$, $p \geq 2$ and $c (\cdot) \in \LIP(1)$ be as in Lemma  \ref{prop:per1}. We have
\begin{equation*}\label{eq:nonalignimplies}
\dE \G{\frac{h_r + h_l}{2}} \leq (1+2c(\lambda))(1+\beta_l)\left(\frac{3+\dE\COS \alpha_{rl}}{4} + \beta_2  \right)\frac{\gamma_l^{p} + \dEs \gamma_i^{p}}{2}  + c(\lambda).
\end{equation*}
\end{lemma}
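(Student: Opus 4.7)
The plan is to follow the convexity strategy of the proof of Lemma~\ref{prop:convexity}, but to retain the cosine information that was discarded there. Applying convexity of $\gamma$, convexity of $x\mapsto x^{p/2}$ (which holds since $p\ge2$), and then squaring, we get the basic decomposition
$$
\dE\,\gamma^p\!\left(\tfrac{h_r+h_l}{2}\right)\ \le\ \tfrac14\bigl(\gamma_l^p+\dE\gamma_r^p\bigr)\ +\ \tfrac12\,\gamma_l^{p/2}\,\dE\gamma_r^{p/2}.
$$
The diagonal piece is handled exactly as in Lemma~\ref{prop:convexity}: the regularity bound $\hat\gamma^p\le(1+c(\lambda))\gamma_s^p+c(\lambda)$ from Lemma~\ref{prop:per1}$(i)$ together with $\gamma_s^p\le\dEs\gamma_i^p$ (convexity of $\gamma$) bounds it by $(1+c(\lambda))(\gamma_l^p+\dEs\gamma_i^p)/4+c(\lambda)/8$. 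Since $\dE\COS\alpha_{rl}\ge 0$, this already contributes the ``$3/4$'' part of the stated bound.

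The real work is the cross term $\tfrac12\gamma_l^{p/2}\dE\gamma_r^{p/2}$. Splitting $\dE\gamma_r^{p/2}=\tfrac12\hat\gamma^{p/2}+\tfrac12\dEs\gamma_i^{p/2}$ and invoking the regularity bound at exponent $p/2$ reduces the task to controlling the two products $\gamma_l^{p/2}\gamma_s^{p/2}$ and $\gamma_l^{p/2}\dEs\gamma_i^{p/2}$ in terms of $\COS\hat\alpha$ and $\dEs\COS\alpha_i$ respectively (whose average is $2\dE\COS\alpha_{rl}$). I would use the identities $\gamma_l^{p/2}\gamma_s^{p/2}=Q_sQ_2Q_l(\gamma_l^p+\dEs\gamma_i^p)/2$ and $\gamma_l^{p/2}\dEs\gamma_i^{p/2}=Q_2Q_l(\gamma_l^p+\dEs\gamma_i^p)/2$ from the computation in Lemma~\ref{prop:convexity}, then exploit the projection identity $(h_s-\Gamma)\overline{(h_l-\Gamma)}=\dEs[(h_i-\Gamma)\overline{(h_l-\Gamma)}]$ (coming from $h_s=\dEs h_i$) to rewrite $\gamma_s^{1/2}\COS\alpha_{sl}$ in terms of the $\gamma_i^{1/2}\COS\alpha_i$'s, and analogously use the decomposition $\hat h-\Gamma=\hat h\,\Gamma\,[(1+u)(h_s-\Gamma)+u\Gamma+v]$ from the proof of Lemma~\ref{prop:non-alignment} to pass $\COS\alpha_{sl}$ to $\COS\hat\alpha$ modulo a $c(\lambda)$ error. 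The loss in passing from $\dEs\gamma_i^{p/2}\cos\alpha_i$ to $\dEs\gamma_i^{p/2}\COS\alpha_i$ (and similarly for the other terms) is absorbed into the variance correction $\beta_2$ via Lemma~\ref{prop:beta}'s estimate $\VARs\gamma_i^{p/2}\le\beta_2^2\dEs\gamma_i^p$. The overall $(1+\beta_l)$ multiplier arises from the same lemma's bound $a\le(1+\beta_l)(\gamma_l^p+\dEs\gamma_i^p)/2$ for $a\in\{\gamma_l^p,\dEs\gamma_i^p\}$, which converts intermediate estimates expressed in terms of one-sided maxima into symmetric ones.

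The step I expect to be the main obstacle is the precise numerical bookkeeping: assembling these ingredients so that the coefficient in front of $(\gamma_l^p+\dEs\gamma_i^p)/2$ is exactly $(1+\beta_l)\bigl[(3+\dE\COS\alpha_{rl})/4+\beta_2\bigr]$ and not a looser expression, while also ensuring the various Lipschitz-type errors from Lemma~\ref{prop:per1}$(i)$ collect into the announced multiplicative $(1+2c(\lambda))$ and additive $c(\lambda)$ corrections. Once the algebra is carried out correctly, averaging the two half-contributions and using $\COS\hat\alpha+\dEs\COS\alpha_i=2\,\dE\COS\alpha_{rl}$ produces the announced estimate.
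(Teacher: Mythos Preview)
Your opening inequality
\[
\dE\,\gamma^p\!\left(\tfrac{h_r+h_l}{2}\right)\ \le\ \tfrac14\bigl(\gamma_l^p+\dE\gamma_r^p\bigr)\ +\ \tfrac12\,\gamma_l^{p/2}\,\dE\gamma_r^{p/2}
\]
already discards all angular information: it is obtained by first applying the convexity bound $\gamma((h_r+h_l)/2)\le(\gamma_r+\gamma_l)/2$, which is exactly the step that replaces $\cos\alpha_{rl}$ by $1$ in the identity of Lemma~\ref{prop:propgamma}$(iii)$. The right-hand side depends only on the moduli $\gamma_r,\gamma_l$, so no subsequent manipulation of the cross term $\gamma_l^{p/2}\dE\gamma_r^{p/2}$ can reintroduce $\COS\alpha_{rl}$. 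In particular, the products $\gamma_l^{p/2}\gamma_s^{p/2}$ and $\gamma_l^{p/2}\dEs\gamma_i^{p/2}$ are fixed nonnegative numbers; you cannot bound them above by quantities proportional to $\COS\hat\alpha$ or $\dEs\COS\alpha_i$, since those cosines may vanish while the products do not. The projection identities you mention relate angles to one another but cannot manufacture an $\alpha_{rl}$-dependence that the inequality has already erased.

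The paper avoids this loss by starting from the \emph{equality} in Lemma~\ref{prop:propgamma}$(iii)$ for $n=2$,
\[
\gamma\!\left(\tfrac{h_r+h_l}{2}\right)=\tfrac12 p_r\gamma_r+\tfrac12(1-p_r)\gamma_l+\cos\alpha_{rl}\sqrt{p_r(1-p_r)\gamma_r\gamma_l},
\]
bounding the last term by $\tfrac14\COS(\alpha_{rl})(\gamma_r+\gamma_l)$ via AM--GM, and only then applying the convexity of $x\mapsto x^{p/2}$ at the level of this three-term convex combination. After taking the $p/2$ power, the squaring is done asymmetrically as
\[
\gamma^{p}\!\left(\tfrac{h_r+h_l}{2}\right)\ \le\ \tfrac14\gamma_r^{p}+\tfrac14\gamma_r^{p/2}\gamma_l^{p/2}+\tfrac12\,\gamma_l^{p/2}\,\gamma^{p/2}\!\left(\tfrac{h_r+h_l}{2}\right),
\]
so that the last term still carries $\COS\alpha_{rl}$. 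The $\beta_2$ then enters through the covariance bound $\dEs[f_i\gamma_i^{p/2}]\le\dEs f_i\,\dEs\gamma_i^{p/2}+\sqrt{\VARs\gamma_i^{p/2}}$ applied with $f_i=\COS\alpha_i$ and $f_i=p_i$; the $(1+\beta_l)$ comes in when converting $\max(\gamma_l^{p/2},(\dEs\gamma_i^{p/2}))$ to the symmetric quantity via Lemma~\ref{prop:beta}. You should restart from the exact two-point formula rather than the convex upper bound.
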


\begin{proof} Let $p_r =  \Im h_r / (\Im  h_r + \Im  h_l)$ from Lemma \ref{prop:propgamma}$(iii)$ and the AM-GM inequality, we have
\begin{align*}
  \g{\frac{h_r + h_l}{2}} &
  = \frac{1}{2}p_r \gamma_r + \frac{1}{2}(1-p_r) \gamma_l + \cos \alpha_{rl} \sqrt{p_r (1-p_r)}\sqrt{\gamma_r\gamma_l}\\
  &\leq \frac{1}{2}p_r \gamma_r + \frac{1}{2}(1 -p_r) \gamma_l + \frac{1}{4}\COS (\alpha_{rl} )(\gamma_r+\gamma_l).
\end{align*}
Using the convexity of $x^{p/2}$ and that $\COS^{p/2} \leq \COS$ we get 
$$\gamma^{p/2} \left(\frac{h_r + h_l}{2}\right) \leq \frac{1}{2}p_r \gamma_r^{p/2} + \frac{1}{2}(1 -p_r) \gamma_l^{p/2} + \frac{1}{4}\COS (\alpha_{rl} )(\gamma_r^{p/2}+\gamma_l^{p/2}).$$
We treat the two events $\{h_r = h_i \hbox{ for some } i\in \{1,\ldots,n\}\}$ and $\{h_r = \hat{h}\}$ separately. For the first one, we write, with $p_i = \Im h_i / (\Im h_r + \Im h_l)$ and $\alpha_i$ as in Lemma \ref{prop:non-alignment}, 
\begin{align*}
  \dEs \gamma^{p/2}\left(\frac{h_i+ h_l}{2}\right)&\leq \frac{1}{2}\dEs[p_i\gamma_i^{p/2}] + \frac{1}{2}(1-\dEs[p_i]) \gamma_l^{p/2} + \frac{1}{4}\dEs[\COS( \alpha_{i}) (\gamma_i^{p/2}+\gamma_l^{p/2})]\\
  &\leq \frac{1}{2}\dEs[p_i] \dEs[
  \gamma_i^{p/2}] + \frac{1}{2}(1-\dEs[p_i]) \gamma_l^{p/2} + \frac{1}{2}\dEs[\COS \alpha_{i}]\left(\frac{\dEs[\gamma_i^{p/2}]+\gamma_l^{p/2}}{2}\right) \\
  & \quad \quad \quad + (\frac{1}{2}+\frac{1}{4}) \sqrt{\VARs \gamma_i^{p/2}}\\
  &\leq \left(\frac{1 + \dEs \COS \alpha_{i}}{2} + \beta_2 \right) \sqrt{1+ \beta_l} \sqrt{ \frac{\gamma_l^{p} + \dEs \gamma_i^{p}}{2}},
\end{align*}
where we used Jensen inequality, \eqref{eq:B2}-\eqref{eq:Bl} from Lemma \ref{prop:beta} together with $\dEs [f_i\gamma_i^{p/2}] \leq \dEs[f_i]\dEs[\gamma_i^{p/2}] + \sqrt{\VARs \gamma_i^{p/2}}$ if $0 \leq f_i \leq 1$ (for $f_i = \COS \alpha_{i}$ and $f_i=p_i$).

Now, we deal with event $\{h_r = \hat{h}\}$. From Lemma \ref{prop:convexity}$(i)$ and Lemma \ref{prop:per1}$(i)$, we have $\hat{\gamma}^{p/2} \leq (1+c(\lambda)) \dE_s \gamma_i^{p/2} + c(\lambda)$. We deduce, with $\hat p = \Im \hat h / (\Im \hat h + \Im h_l)$,
\begin{align*}
  \gamma^{p/2} \left(\frac{h_r + h_l}{2}\right) &\leq \frac{1}{2}\hat{p} \hat{\gamma}^{p/2} + \frac{1}{2}(1-\hat{p}) \gamma_l^{p/2} + \frac{1}{4}\COS \hat \alpha(\hat{\gamma}^{p/2}+\gamma_l^{p/2})\\
  & \leq (1+c(\lambda))\left(\hat{p} \dE_s \gamma_i^{p/2} + (1-\hat{p})\gamma_l^{p/2} + \frac{\COS \hat \alpha}{2}\frac{ \dE_s \gamma_i^{p/2} + \gamma_l^{p/2}}{2}\right) + (\frac{1}{2}+\frac{1}{4})c(\lambda)\\
  & \leq (1+c(\lambda))\left(\frac{1+\COS \hat \alpha}{2}\right)\sqrt{1+\beta_l}\sqrt{\frac{\gamma_l^{p} + \dEs \gamma_i^{p}}{2}} +  c(\lambda),
\end{align*}
where have used Jensen inequality and Lemma \ref{prop:beta}. Combining the two events and multiplying by $\gamma_l^{p/2}$ we get from a new application of Lemma \ref{prop:beta} and Jensen inequality,
\begin{align*}
\gamma_l^{p/2} \dE \gamma^{p/2}\left(\frac{h_r + h_l}{2}\right) &\leq ( 1+ c(\lambda)) \left(\frac{1 + \dE \COS \alpha_{rl}}{2} + \beta_2 \right)(1+\beta_l) \frac{\gamma_l^{p} + \dEs \gamma_i^{p}}{2} + \frac{c(\lambda)}{2}\gamma_l^{p/2} \\
& \leq ( 1+ 2 c(\lambda)) \left(\frac{1 + \dE \COS \alpha_{rl}}{2} + \beta_2 \right)(1+\beta_l) \frac{\gamma_l^{p} + \dEs \gamma_i^{p}}{2} + \frac{c(\lambda)}{4}.
\end{align*}

Moreover, from \eqref{eq:Egrr}-\eqref{eq:Egrr2} we obtain
\begin{align*}
    \dE \gamma_r^{p} &\leq (1+c(\lambda))\dEs \gamma_i^{p} +\frac{ c(\lambda)}{2} \leq  (1+c(\lambda))(1 + \beta_l) \left(\frac{\gamma_l^{p} + \dEs \gamma_i^{p}}{2}\right) +\frac{ c(\lambda)}{2}  \\
    \gamma_l^{p/2}\dE \gamma_r^{p/2} & \leq (1+c(\lambda)) \frac{\gamma_l^{p} + \dEs \gamma_i^{p}}{2}+ \frac{c(\lambda)(\gamma_l^{p}+1)}{4} \leq (1+2 c(\lambda)) (1 + \beta_l) \frac{\gamma_l^{p} + \dEs \gamma_i^{p}}{2} + c(\lambda)/4.  
\end{align*}

Finally, using Lemma \ref{prop:propgamma}$(ii)$ at the first line and then applying the above bounds, we find
\begin{align*}
    \mathbb{E}\G{\frac{h_r + h_l}{2}} \leq & \frac{1}{4}\mathbb{E}\gamma_r^{p} + \frac{1}{4}\mathbb{E}[\gamma_r^{p/2}]\gamma_l^{p/2}  + \frac{\gamma_l^{p/2}}{2}\mathbb{E} \gamma^{p/2}\left(\frac{h_r + h_l}{2}\right)\\
    \leq & (1+2c(\lambda))(1+\beta_l)\left[ \frac{1}{4} + \frac{1}{4} + \frac{1}{2}\left(\frac{1+\mathbb{E}\COS \alpha_{rl}}{2} + \beta_2 \right) \right] \left(\frac{\gamma_l^{p} + \dEs \gamma_i^{p}}{2}\right) + c(\lambda).
\end{align*}
This concludes the proof.
\end{proof}

All ingredients are now gathered to establish the asymmetric uniform contraction. 
\begin{proof}[Proof of Proposition \ref{prop:ACE}] Let $\delta, \lambda_0, R_0(.)$ from Lemma \ref{prop:non-alignment}. There exists $\veps_0 = \veps_0(E,p) >0$ such that 
$$Q_sQ_2Q_l \geq 1-\veps_0 \implies (1+\beta_l)\left(1 - \frac \delta 4+ \beta_2  \right) \leq 1-\frac \delta 5.$$
Without loss of generality, we suppose that $\veps_0 \leq 1/2$. We may also assume that $\lambda_0$ is small enough so that
$$(1+2 c(\lambda_0) )\max\left\{1-\frac{\veps_0}{4},1-\frac{\delta}{ 5}\right\} = 1-\veps,$$
for some $\veps=\veps(E,p)>0$. Now if $(\gamma_l^{p} + \dEs \gamma_i^{p})/2 \leq R_0(\lambda)$, then by Lemma \ref{prop:convexity} we have 
$$\dE \G{\frac{h_r+h_l}{2}} \leq (1+2 c(\lambda)) R_0(\lambda)  + c(\lambda) := R(\lambda).$$
Note that by definition $R \in \LIP(\lambda_0)$. If $Q_sQ_2Q_l \leq 1- \veps_0$ then by Lemma \ref{prop:convexity} again we have 
$$\dE \G{\frac{h_r+h_l}{2}} \leq (1+2 c(\lambda)) \left(1-\frac{\veps_0}{4}\right) \frac{\gamma_l^{p} + \dEs \gamma_i^{p}}{2}  + c(\lambda).$$
Otherwise $(\gamma_l^{p} + \dEs \gamma_i^{p})/2 \geq R_0( \lambda)$ and  $Q_sQ_2Q_l \geq 1-\veps_0 \geq 1/2$. We may use Lemma \ref{prop:non-alignment} together with Lemma \ref{prop:contraction} and get
$$\dE \G{\frac{h_r + h_l}{2}} \leq (1+2 c(\lambda))\left(1-\frac \delta 5\right)\frac{\gamma_l^{p} + \dEs \gamma_i^{p}}{2}  + c(\lambda).$$
From our choice of $\veps>0$ and $R(\lambda)$, we find in that all cases
$$\dE \G{\frac{h_r + h_l}{2}} \leq (1-\veps)\frac{\gamma_l^{p} + \dEs \gamma_i^{p}}{2}  + R(\lambda),$$
which concludes the proof.
\end{proof}

\subsection{Proof of Theorem \ref{thm:main}}

\noindent{\em Step 1: rough bound on $\g{g}$. } We assume that $N$ has finite $2p$-moments. We first check that $\dE \G{g}$ is finite for any $z  \in \dH_E$. From \eqref{eq:RDEz},
$$\Im g \stackrel{d}{=}  \frac{\Im ( z + \frac{1}{d}\sum_{i=1}^{N} g_i + v)  }{|z +\frac{1}{d}\sum_{i=1}^{N} g_i + v|^2} \geq \frac{\Im z }{|z+\frac{1}{d}\sum_{i=1}^{N} g_i +v|^2} \geq \frac{\Im z }{\left(|z|+\frac{N}{d \Im z} +|v| \right)^2} .$$
Moreover $|g|,|g_i|,|\Gamma| \leq 1/\Im z$. We get  
$$\dE \G{g} = \dE \frac{|g-\Gamma|^{2p}}{(\Im \Gamma \Im g)^{p}} \leq \frac{1}{(\Im \Gamma)^p}\left(\frac{2}{ \Im z}\right)^{2p} \frac{1}{(\Im z)^{p}}\dE \left( |z|+\frac{N}{d\Im z} +|v| \right)^{2p}< \infty,$$
since by assumption both $N$ and $v$ admit $2p$-moments. Now as the expectation is finite, we will show that, uniformly in $z\in \dH_E$, $\dE \G{g} \leq (1-\veps) \dE \G{g} +R$ to conclude that $\dE \G{g} \leq R/\veps$. 

\noindent{\em Step 2: averaging with two copies. } We now upper bound $\dE\G{g}$ in terms of $\dE\G{(g_r +g_l)/2}$ where $g_r,g_l$ are two independents copies of $g$.  On the event $\cE_\lambda$ defined in \eqref{eq:defEl} we have $N \geq 2$. We use Lemma \ref{prop:propgamma} and Lemma \ref{prop:per1}$(i)$ and obtain 
\begin{align*}
  \mathbf{1}_{\cE_\lambda}\G{g} &\leq \mathbf{1}_{\cE_\lambda}\G{\frac{N}{d}\frac{1}{N}\sum_{i=1}^{N} g_i + v } \leq \mathbf{1}_{\cE_\lambda}(1+c(\lambda))\G{\frac{1}{N}\sum_{i=1}^{N} g_i}+\mathbf{1}_{\cE_\lambda}c(\lambda)\\ 
  &\leq  (1+c(\lambda))\frac{1}{N(N -1)}\sum_{1 \leq i \ne j  \le N} \G{\frac{g_i + g_j}{2} }+ c(\lambda).
\end{align*}
By taking expectation over the $g_i$’s and use that given $(N,v)$, the $g_i$'s are iid copies of $g$, we get
$$  \dE \mathbf{1}_{\mathcal{E}_\lambda}\G{g} \leq  (1+c(\lambda)) \dE \G{ \frac{g_r + g_l}{2}}  +  c(\lambda).$$
On the complementary event $\cE_\lambda^c$, we use Lemma \ref{prop:per1}$(iii)$:   
\begin{align*}
  \mathbf{1}_{\cE^c_\lambda}\G{g_o} &\leq \mathbf{1}_{\cE^c_\lambda}\G{ \frac{N}{d}\frac{1}{N}\sum_{i=1}^{N} g_i + v } \leq  \mathbf{1}_{\cE^c_\lambda} C(1+|v|)^{2p} \left(\frac{N}{d}+\frac{d}{N}\right)^{p}\left(\G{\frac{1}{N}\sum_{i=1}^{N} g_i } +1\right)\\
  & \leq  \mathbf{1}_{\cE^c_\lambda} C(1+|v|)^{2p} \left(\frac{N}{d}+\frac{d}{N}\right)^{p}\frac{1}{N}\sum_{i=1}^{N} \left(\G{ g_i } +1\right).
\end{align*}
We average first over the $g_i$’s and obtain  
$$  \dE \mathbf{1}_{\cE^c_\lambda}\G{g} \leq C \dE \left[\mathbf{1}_{\cE^c_\lambda}(1+|v|)^{2p} \left(\frac{N}{d}+\frac{d}{N}\right)^{p} \right] \left(\dE\G{ g} +1\right) = C \alpha_p(\lambda) \left(\dE\G{ g } +1\right).$$

From the identity $1 = \mathbf{1}_{\mathcal{E}_\lambda} + \mathbf{1}_{\cE^c_\lambda}$, we obtain:
\begin{equation} \label{eq:step2}
      \dE \G{g} \leq (1+c(\lambda)) \dE  \G{\frac{g_r + g_l}{2}}  +  c(\lambda) + C \alpha_p(\lambda)\left(\dE\G{ g} +1\right).
\end{equation}

\noindent{\em Step 3: asymmetric uniform contraction. } We now bound $\dE   \G{(g_r +g_l)/2}$ in terms of $\dE\G{g}$. There exists an enlarged probability space for the variables $(g_r,g_l)$ such that 
$$
g_r = - \left( z + \frac 1 d \sum_{i=1}^{N} h_i + v \right)^{-1},
$$
where $(h_i)_{i \geq 1}$ are iid with law $g_r \stackrel{d}{=} g$ and independent of $(N,v)$.  
For integer $n \geq 1$ and $\lambda \in [0,1)$, let $\mathcal{E}_{\lambda,n} = \{|v| \leq \lambda ,  N = n\}$. We have $\mathcal{E}_\lambda = \bigcup_{n} \mathcal{E}_{\lambda,n}$ where the union is over all $n \geq 2$ such that $n/d \in (1-\lambda,1+\lambda)$. On the event $\cE_{\lambda,n}$, we set $u_n = n/d -1 \in (-\lambda,\lambda)$ and $\bm h = (h_1,\ldots,h_n)$. Hence, on the event $\cE_{\lambda,n}$, we have
$$
g_r = - \left( z + \frac {1 + u_n} d \sum_{i=1}^{n} h_i + v \right)^{-1} = \hat h (\bm h,u_n,v , n),
$$
where $\hat h$ was defined in \eqref{def:hhat}. Let $v_n$ be a random variable whose distribution is $v$ given $\cE_{\lambda,n}$ and let $h_r = h_r(\bm h,u,v,n)$ be the random variable defined in \eqref{def:hr}. Using Proposition \ref{prop:ACE}, we deduce that if $\lambda \leq \lambda_0$,
\begin{align*}
    &\frac{1}{2} \dE \left[\G{\frac{g_r +g_l}{2}}\right]+\frac{1}{2} \dE \left[ \G{\frac{g_r +g_l}{2}}\Bigg{|} \mathcal{E}_{\lambda,n} \right] \\
    & = \dE \left[ \frac{1}{2n} \sum_{i=1}^{n} \G{\frac{h_i +g_l}{2}} + \frac{1}{2}\G{  \frac{\hat h (\bm h,u_n,v_n,n) + g_l}{2}}\right]\\
    &= \dE \left[\G{  \frac{h_r(\bm h ,u_n,v_n,n) + g_l}{2}}\right]\\
    & \leq \dE\left[(1-\veps) \left(\frac 1 2 \G{g_l} + \frac{1}{2n}\sum_{i=1}^n\G{h_i}\right) + R(\lambda)\right]\\
    &= (1-\veps)\dE [\G{g}]+R(\lambda).
\end{align*}

We now use the identity $$1 = \frac{\dP(\mathcal{E}^c_{\lambda})+\mathbf{1}_{\mathcal{E}^c_\lambda}}{2}+ \sum_n \frac{\dP(\mathcal{E}_{\lambda,n})+\mathbf{1}_{\mathcal{E}_{\lambda,n}}}{2},$$
where the sum is over all $n \geq 2$ such that $n/d \in (1-\lambda,1+\lambda)$.
The first term is bounded using the convexity of $\gamma^p$, Lemma \ref{prop:propgamma}$(ii)$, and Lemma \ref{prop:per1}$(iii)$,
\begin{align*}
  &\dE \left[\G{\frac{g_r +g_l}{2}} \frac{\dP(\mathcal{E}^c_\lambda)+\mathbf{1}_{\mathcal{E}^c_\lambda}}{2}\right] \leq \dE \left[\frac{\G{g_r} +\G{g_l}}{2} \frac{\dP(\cE_\lambda^c)+\mathbf{1}_{\cE_\lambda^c}}{2}\right] \\
  &\leq\frac{3}{4} \dP(\cE_\lambda^c) \dE \G{g}+ \frac{C}{4} \dE \left[\mathbf{1}_{\cE^c_\lambda}(1+|v|)^{2p} \left(\frac{N}{d}+\frac{d}{N}\right)^{p} \right] \left(\dE \G{g} +1\right) \\
  &\leq \frac{C+3}{4} \alpha_p(\lambda) \left(\dE \G{g} +1\right) ,
\end{align*}
where we used the fact that $\dP(\cE^c_\lambda)\leq  \alpha_p(\lambda)$.
Finally we obtain
\begin{equation}\label{eq:step3}
    \dE \G{\frac{g_r +g_l}{2}}  
    \leq (1-\veps)\dE  \G{g} +R(\lambda) + \frac{C+3}{4}\alpha_p(\lambda) \left(\dE \G{g} +1\right).
    \end{equation}

\noindent{\em Step 4: end of proof. } 
If $\lambda$ is small enough so that $1+c(\lambda) \leq 2$, combining \eqref{eq:step2}-\eqref{eq:step3}, we get
$$\dE \G{g} \leq (1-\veps)(1+c(\lambda))\dE \G{g} + 2R(\lambda) + c(\lambda) + (2C +1 ) \alpha_p(\lambda) \left(\dE\G{ g } +1\right).$$
We now assume that $\lambda+\alpha_p(\lambda)$ is small enough, say $\lambda_1$, so that $(1-\veps)(1+c(\lambda)) + (2 C +1) \alpha_p(\lambda) \leq 1-\veps/2$. Then
$$\dE \G{g}  \leq \left( 1-\frac \veps 2 \right)\dE \G{g} + 2R(\lambda)+c(\lambda)+(2C+1)\alpha_p(\lambda). $$
The above equation is of the form
$$\dE \G{g} \leq (1-\veps_1)\dE \G{g}+ R_1(\lambda  + \alpha_p(\lambda)),$$
where $\veps_1 = \veps /2 > 0$ and $R_1 \in \LIP(\lambda_1)$. We deduce that for all $\lambda \leq \lambda_1$,
\begin{equation} \label{eq:thmmain}
  \dE \G{g} \leq \frac{R_1 (\lambda + \alpha_p(\lambda))}{\veps_1}.  
\end{equation}
Finally, we use Lemma \ref{lem:boundgamma}  and Cauchy-Schwarz inequality,
$$
  \dE |g-\Gamma|^{p} \leq |\Gamma|^{p} 2^{p/2-1} \left( 4^{p/2} \dE \gamma^p(g) + 2^{p/2} \dE \gamma(g)^{p/2}\right) \leq |\Gamma|^{p}2^{2p} \left(\dE \G{g} +\sqrt{ \dE \G{g}}\right), 
$$
$$  \dE (\Im g)^{-p} \leq (\Im \Gamma)^{-p} \dE (4\gamma(g,h)+2)^{p} < \infty.$$
As $|\Gamma|, (\Im \Gamma)^{-1}$ are uniformly bounded in $\dH_E$, this concludes the proof of Theorem \ref{thm:main} for suitable constants $\delta = \lambda_1/2$ and $C >0$ when $N$ has finite $2p$-moments.

\noindent{\em Step 5: truncation. } It remains to check that we can remove the assumption that $\dE N^{2p}$ is finite if $v$ is real-valued. This can be done easily by truncation. From \eqref{recursion}, $g(z)$ has law $g_1(z) = \langle \delta_1 , ( H -  z)^{-1} \delta_1 \rangle$. For integer $n \geq 1$ and $v \in \mathcal V$, let $N^{(n)}_v = \max (N_v,n)$ and let $ \cT_n$ be the corresponding random rooted tree of descendants of $1$. The tree  $ \cT_n$ has law $\GW(P_n)$ where $P_n$ is the law of $N^{(n)}= \max(N,n)$ with $N \stackrel{d}{=} P$. By monotone convergence, $d_n = \dE N^{(n)}$ converges to $d = \dE N$ as $n$ goes to infinity. Let $H_n$ be the corresponding operator and $g^{(n)}_1(z) = \langle \delta_1 , (H_n -  z)^{-1} \delta_1 \rangle$. Then by \cite[Theorem VIII.25(a)]{MR0493421}, for any fixed $z \in \dH$, with probability one, 
$$
\lim_{n \to \infty} g^{(n)}_1(z) = g_1(z),
$$
(note however that the convergence is a priori not uniform in $z \in \dH$). Since $|g^{(n)}_1(z) | \leq 1 / \Im(z)$, the convergence also holds in $L^q$ for any $q\geq 1$. Note also that by monotone convergence, for any $\lambda >0$, $\alpha_p^{(n)}( \lambda)$ converges to $\alpha_p(\lambda)$ where $\alpha_p^{(n)}( \lambda)$ is defined as  in \eqref{eq:defalphal} for $P_n$ and $d_n$. It thus remains to apply Theorem \ref{thm:main} to $P_n$ and let $n$ go to infinity. 
\qed

\subsection{Proof of Corollary \ref{cor:main}}
Set $N_o = N_\star$ and $V_o = v_\star$. From \eqref{recursion}, we have 
$$
g_o = - \left( z + \frac 1 d \sum_{i=1}^{N_\star} g_i + v_* \right)^{-1},
$$
where $(g_i)_{i \geq 1}$ are independent iid copies of $g$ which satisfies the equation in distribution \eqref{eq:RDEz}. From \eqref{screcursiono}, we get
$$
\gamma( g_o,\Gamma_\star) = \gamma \left( - \frac 1  {g_o}, - \frac{1}{\Gamma_\star} \right)  \leq \gamma \left(  \frac 1 d \sum_{i=1}^{N_\star} g_i + v_* , \rho \Gamma\right) = \gamma \left(  \frac {N_\star} {d_\star} \frac{1}{N_\star} \sum_{i=1}^{N_\star} g_i + \frac{v_*}{\rho} , \Gamma\right),
$$
where we have used $\gamma(ta,tb) = \gamma(a,b)$ for $t >0$ and $\rho =  d_\star/d$. From Lemma \ref{prop:propgamma}$(ii)$ and Lemma \ref{prop:per1}$(ii)$, we deduce that 
$$
\IND_{\cF_\lambda^{ c}}\gamma^{p}( g_o,\Gamma_\star) \leq C \IND_{\cF_\lambda^{c}} \left(1+\left|\frac{v_\star}{\rho}\right|\right)^{2p}\left( \frac{N_\star}{d_\star} + \frac{d_\star}{N_\star} \right)^{p}\left(\frac 1 {N_\star} \sum_{i=1}^{N_\star} \gamma^{p}(g_i) +1 \right),
$$
$$\dE \IND_{\cF_\lambda^{c}} \gamma^{p}( g_o,\Gamma_\star) \leq C \beta_p(\lambda) \left(\dE \gamma^p(g)+1\right).$$
Similarly, from Lemma \ref{prop:propgamma}$(ii)$ and Lemma \ref{prop:per1}$(i)$,
$$
\IND_{\cF_\lambda} \gamma^{p}( g_o,\Gamma_\star) \leq (1+c(\lambda)) \frac{1}{N_\star} \sum_{i=1}^{N_\star}\gamma^{p}(g_i) +c(\lambda),
$$
$$\dE \IND_{\cF_\lambda} \gamma^{p}( g_o,\Gamma_\star) \leq (1+c(\lambda)) \dE \gamma^p(g) + c(\lambda)$$
Taking the expectation, we deduce that if $\lambda + \alpha_p(\lambda) \leq \delta$,
\begin{align*}
\dE \gamma^{p}( g_o,\Gamma_\star) &\leq (1+c(\lambda)+ C\beta_p(\lambda))\dE \gamma^{p}(g) +C\beta_p(\lambda) + c(\lambda)\\
&\leq C_1(1+\lambda+\beta_p(\lambda))(\lambda+\alpha_p(\lambda))+C_1(\beta_p(\lambda) + \lambda)\\
&\leq C_1\left((1+\lambda+\beta_p(\lambda))(1+\lambda+\alpha_p(\lambda))-1\right)
\end{align*}
for some $C_1 >0$, where we have used \eqref{eq:thmmain} and the fact that $c(\lambda) \in \LIP(\delta)$. We have 
$$
|\Gamma_{\star}| \leq \frac{1}{\Im (z  + \rho \Gamma) } \leq  \frac{1}{ \rho \Im (\Gamma) } \leq \frac{C_2}{\rho}, 
$$
$$\Im \Gamma_\star = \frac{\Im z + \rho \Im \Gamma}{|z+\rho\Gamma|^2} \geq \frac{1}{2}\frac{\rho \Im \Gamma}{|z|^2+\rho^2 |\Gamma|} \geq C_2 \left(\rho+\frac{1}{\rho}\right)^{-1},$$
uniformly in $z\in \dH_E$, for some constant $C_2$. It remains to use Lemma \ref{lem:boundgamma} and adjust the constants.
\qed
\section{Random trees with leaves}
\label{sec:leaves}

\subsection{Proof of Lemma \ref{lem:RDEleaves}}

\label{subsec:skeleton}

Let $v \ne o \in \mathcal V$. Let $\bar N^s$ and $\bar N^e$ be the number of offspring of $v $ in $\mathcal S$ and not in $\mathcal S$ (that is such that their subtree is infinite or finite).  The pair $(\bar N^s,\bar N^e)$ has the same distribution than $( \sum_{i=1} ^N ( 1 -\veps_i) , \sum_{i=1} ^N \veps_i)$ where $N$ has distribution $P$ and is independent of the $(\veps_i)_{i \geq 1}$ an iid sequence of Bernoulli variables with $\dP ( \veps_i = 1) = \pi_e = 1 - \dP ( \veps_i = 0)$. Moreover, conditioned on the root is in $\mathcal S$, $(\bar N^s,\bar N^e)$ is conditioned on $\bar N^s \geq 1$. As in Lemma \ref{lem:RDEleaves}, let $(N^s, N^e)$ denote a pair of random variables with  distribution $(\bar N^s,\bar N^e)$ conditioned on $\bar N^s \geq 1$. In particular, the moment generating function of $(N^s,N^e)$ is given by 
\begin{equation}\label{eq:varphis}
\varphi^{s,e} (x,y) = \dE\left[ x^{N^s} y ^{N^e}  \right]  =   \dE\left[ x^{\bar N^s} y ^{\bar N^e}  | \bar N^s \geq 1\right] = \frac{ \varphi ( (1 - \pi_e ) x + \pi_e y ) - \varphi( \pi_e y)  } { 1 - \varphi( \pi_e y) }. 
\end{equation}
Similarly, given $v \notin \mathcal S$, $(N^s,N^e)$ is conditioned on $N^s = 0$.  Then, we find  that the moment generating function of $N^e$ given $o \notin \mathcal S$ is 
\begin{equation}\label{eq:varphie}
\varphi^e (x) = \frac{ \varphi ( \pi_e x ) } {   \pi_e}. 
\end{equation}
For more details see Athreya and Ney \cite[Section I.12]{MR0373040} or Durrett \cite[Section 2.1]{MR2656427}.

Similarly, let $\bar N_\star^s$ and $\bar N_\star^e$ be the number of offspring of $o $ in $\mathcal S$ and not in $\mathcal S$. Let $(N_\star^s,N_\star^e)$ be the law of $(\bar N_\star^s,\bar N_\star^e)$ given $\bar N_\star^s \geq 1$. The moment generating function of $(N_\star^s,N_\star^e)$  is 
\begin{equation*}\label{eq:varphisstar}
\varphi^{s,e}_\star (x,y) = \dE\left[ x^{N_\star^s} y ^{N_\star^e}  \right] = \frac{ \varphi_\star( (1 - \pi_e ) x + \pi_e y ) - \varphi_\star( \pi_e y)  } { 1 - \varphi_\star( \pi_e y) }. 
\end{equation*}

We are ready to prove Lemma \ref{lem:RDEleaves}. 

\begin{proof}[Proof of Lemma \ref{lem:RDEleaves}]
Recall that $H = A / \sqrt d_s$ where $d_s = \dE N^s$ and $g_v(z) = \langle \delta_v , (H_v -  z)^{-1} \delta_v \rangle$. For $v \ne o$, let $g^s(z)$ and $g^e(z)$ be the law of $g_v(z)$ conditioned on $v \in \mathcal S$ and $v \notin \mathcal S$. We deduce from \eqref{recursion} that the variable $g^s(z)$ satisfies the recursive distribution equation, 
\begin{equation*}
g^s(z) \stackrel{d}{=} - \left( z + \frac 1 {d_s} \sum_{i=1}^{N^s}  g^s_i (z) +  v(z) \right)^{-1},  
\end{equation*}
where $g^s_i$ are iid copies of $g^s$, independent of $(N^s,v(z)) $  defined by 
\begin{equation}\label{eq:defV}
v(z) =  \frac 1 {d_s} \sum_{i = 1} ^{N^e} g^e_{i} (z), 
\end{equation}
and $g^e_i$ are iid copies of $g^e$ and are independent of $(N^s,N^e)$ with moment generating function given by \eqref{eq:varphis}. Similarly, conditioned on $o \in \mathcal S$, we have 
\begin{equation*}
g_o(z) \stackrel{d}{=} - \left( z + \frac 1 {d_s} \sum_{i=1}^{N_\star^s}  g^s_i (z) +   \frac 1 {d_s} \sum_{i = 1} ^{N_\star^e} g^e_{i} (z)  \right)^{-1},  
\end{equation*}
where $g^s_i$ are iid copies of $g^s$, independent of $(N_\star^s,N_\star^e) $.
It concludes the proof of Lemma \ref{lem:RDEleaves}.
 \end{proof}

\begin{remark}
As we defined it, the skeleton tree $\mathcal S$ is not a unimodular tree. To obtain a unimodular Galton-Watson tree, the solution is to modify the rule for the root to be in $\mathcal S$: the root is in $\mathcal S$ if it has at least two offsprings with infinite subtrees. 
\end{remark}

\subsection{Proof of Lemma \ref{lem:leaves}}

We start with a rough upper bound on the extinction probability $\pi_e$. 

\begin{lemma}[Extinction probability]\label{le:pie}
We have
$$
\pi_e \leq 2 \dP ( N \leq 1).
$$
\end{lemma}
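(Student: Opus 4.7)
The plan is to exploit the convexity of the probability generating function $\varphi$ together with an elementary one-variable inequality. Write $q = \dP(N \leq 1) = P_0 + P_1$, so the target bound reads $\pi_e \leq 2q$.

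First I would dispose of the easy range $q \geq 1/2$, where $2q \geq 1 \geq \pi_e$ trivially. This also handles the subcritical/critical regime $d \leq 1$: the inequality $d = \sum_k k P_k \geq 2\sum_{k \geq 2} P_k = 2(1-q)$ forces $q \geq 1/2$ whenever $d \leq 1$, and there $\pi_e = 1$. So from now on I may assume $q < 1/2$, which in particular ensures $d > 1$ and $\pi_e \in [0,1)$.

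Next I would invoke the standard convexity picture: the function $\psi(x) = \varphi(x) - x$ is convex on $[0,1]$, with $\psi(0) = P_0 \geq 0$, $\psi(1) = 0$, and $\psi'(1) = d - 1 > 0$. Hence $\psi$ has a unique zero $\pi_e$ in $[0,1)$, and $\psi \leq 0$ on $[\pi_e, 1]$. It therefore suffices to check the single inequality $\varphi(2q) \leq 2q$ at the candidate point $x = 2q \in [0,1)$, which would immediately give $\pi_e \leq 2q$.

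For that final verification I would use the crude monotone bound $(2q)^k \leq (2q)^2$ for $k \geq 2$ (valid because $2q \leq 1$), yielding
$$\varphi(2q) = P_0 + 2q P_1 + \sum_{k \geq 2} P_k (2q)^k \leq P_0 + 2q P_1 + 4q^2(1-q).$$
Comparing to the target $2q = 2P_0 + 2P_1$ and using $P_0 = q - P_1$, the remaining inequality reduces to the one-variable estimate $4q(1-q) \leq 1$, which is immediate. I do not foresee a real obstacle; the only thing to watch is to peel off the degenerate range $q \geq 1/2$ at the start so that the convexity argument is applied in a regime where $\pi_e$ genuinely sits in the interior of $[0,1]$.
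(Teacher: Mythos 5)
Your proposal is correct and follows essentially the same route as the paper: bound $\varphi(x)$ for $x\in[0,1]$ by a quadratic using $x^k \leq x^2$ for $k\geq 2$, then verify $\varphi(2q)\leq 2q$ via $4q(1-q)\leq 1$. The only cosmetic differences are that you explicitly peel off the trivial range $q\geq 1/2$ (which the paper leaves implicit) and keep the term $2qP_1$ rather than bounding it by $P_1$ as the paper does; in your final reduction you do also silently use $q<1/2$ to discard the nonnegative $P_1(1-2q)$ contribution, which is fine.
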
 
\begin{proof}
We note that if $x \in [0,1]$ satisfies $\varphi(x) \leq x$ then $\pi_e \leq x$. Set $\pi_1 = \dP ( N \leq 1)$. For $x \in [0,1]$, we have 
$\varphi(x) \leq \pi_1 + (1-\pi_1) x^2$. In particular, since $\pi_1(1-\pi_1) \leq 1/4$,
$\varphi(2\pi_1) \leq \pi_1 ( 1 + 4 (1-\pi_1) \pi_1) \leq 2\pi_1. 
$ We conclude that $\pi_e \leq 2\pi_1$. 
\end{proof}

We will also use \cite[Lemma 23]{MR3411739} on the size of the sub-critical Galton-Watson trees. 

\begin{lemma}[Total progeny of subcritical Galton-Watson tree]\label{le:totalprogeny}
Let $Q$ be a probability measure on non-negative integers whose moment generating function $\psi$ satisfies $\psi(\rho) \leq \rho$ for some $\rho \geq 1$. Let $Z$ be the total number of vertices in a $\GW(Q)$ tree. We have for any integer $k \geq 1$,
$$
\dP ( Z \geq k) \leq \rho \left( \frac{ \psi( \rho)}{\rho}\right)^k.
$$
\end{lemma}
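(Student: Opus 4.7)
The plan is to use the classical depth-first exploration representation of $Z$ together with an exponential supermartingale. Let $(Y_n)_{n\geq 1}$ be i.i.d.\ with distribution $Q$ and set
\begin{equation*}
S_0 = 1, \qquad S_n = S_{n-1} + Y_n - 1,
\end{equation*}
so that $Z = \tau$, where $\tau = \inf\{n \geq 1 : S_n = 0\}$ (with $\tau = \infty$ if the walk stays positive), corresponds to the number of vertices explored until all active half-edges are matched. In particular $\{Z \geq k\} = \{\tau \geq k\} = \{\tau > k-1\}$ for every integer $k \geq 1$.

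Next I would introduce the process
\begin{equation*}
M_n = \rho^{S_n} \left(\frac{\rho}{\psi(\rho)}\right)^{n}.
\end{equation*}
Since the $Y_n$ are i.i.d.\ with $\dE[\rho^{Y_{n+1}}] = \psi(\rho)$, a direct conditional expectation computation gives $\dE[M_{n+1}\mid \mathcal{F}_n] = M_n$, so $(M_n)$ is a non-negative martingale with $M_0 = \rho$. The assumption $\psi(\rho)\leq \rho$ ensures that the multiplier $\rho/\psi(\rho)$ is well-defined and at least $1$.

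Apply the optional stopping theorem at the bounded stopping time $\tau \wedge k$ to obtain $\dE[M_{\tau \wedge k}] = \rho$. On the event $\{\tau > k\}$ we have $S_k \geq 1$, hence $M_{\tau \wedge k} = M_k \geq \rho (\rho/\psi(\rho))^k$. Dropping the non-negative contribution from $\{\tau \leq k\}$ yields
\begin{equation*}
\rho \;\geq\; \rho \left(\frac{\rho}{\psi(\rho)}\right)^{k} \dP(\tau > k),
\end{equation*}
so $\dP(Z > k) = \dP(\tau > k) \leq (\psi(\rho)/\rho)^{k}$, equivalently $\dP(Z \geq k) \leq (\psi(\rho)/\rho)^{k-1}$ for all $k\geq 1$.

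Finally, I would convert this into the form stated in the lemma by noting that $\psi$ has non-negative Taylor coefficients summing to $\psi(1) = 1$, so $\psi(\rho) \geq 1$ for $\rho \geq 1$, and hence $\rho/\psi(\rho) \leq \rho$. Therefore
\begin{equation*}
\dP(Z \geq k) \;\leq\; \left(\frac{\psi(\rho)}{\rho}\right)^{k-1} \;=\; \frac{\rho}{\psi(\rho)}\left(\frac{\psi(\rho)}{\rho}\right)^{k} \;\leq\; \rho \left(\frac{\psi(\rho)}{\rho}\right)^{k},
\end{equation*}
as claimed. The only subtle step is checking the martingale property and justifying optional stopping, but the latter is immediate because $\tau\wedge k$ is bounded and $(M_n)$ is non-negative; there is no real obstacle.
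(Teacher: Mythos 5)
Your proof is correct. Note first that the paper itself does not prove this lemma; it simply cites \cite[Lemma~23]{MR3411739}, so there is no in-paper argument to compare against. What you wrote is a clean and standard argument: the Lukasiewicz (depth-first) encoding $S_n = 1 + \sum_{i\le n}(Y_i-1)$ with $Z=\tau=\inf\{n:S_n=0\}$, the exponential martingale $M_n=\rho^{S_n}(\rho/\psi(\rho))^n$ with $\dE[\rho^{Y_{n+1}}]=\psi(\rho)$, and optional stopping at the bounded time $\tau\wedge k$. All the details check out: the martingale property, $M_0=\rho$, the lower bound $M_k\ge \rho(\rho/\psi(\rho))^k$ on $\{\tau>k\}$ (using $S_k\ge1$ and $\rho\ge1$), and the final conversion using $\psi(\rho)\ge\psi(1)=1$ for $\rho\ge1$. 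You in fact establish the slightly stronger bound $\dP(Z\ge k)\le(\psi(\rho)/\rho)^{k-1}$, which implies the stated one since $\rho/\psi(\rho)\le\rho$.

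One remark on economy: the martingale and optional stopping can be bypassed entirely. Since $\{Z\ge k\}=\{\tau\ge k\}\subset\{S_{k-1}\ge 1\}=\{Y_1+\cdots+Y_{k-1}\ge k-1\}$, a single Markov inequality applied to $\rho^{Y_1+\cdots+Y_{k-1}}$ gives
\begin{equation*}
\dP(Z\ge k)\le \rho^{-(k-1)}\psi(\rho)^{k-1}=\left(\frac{\psi(\rho)}{\rho}\right)^{k-1}\le \rho\left(\frac{\psi(\rho)}{\rho}\right)^{k},
\end{equation*}
which is the same conclusion with fewer moving parts. Your argument is morally the same Chernoff bound dressed as optional stopping; the direct inequality avoids even having to discuss the stopping time. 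Either route is fine, and both match the flavour of what the cited reference does.
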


As a corollary, we deduce that the random tree $\cT$ conditioned on extinction is likely to have very few vertices.
\begin{corollary}\label{cor:totalprogeny}
Let $n \geq 2$, $ q = \dP ( N < n)$ and let $Z$ be the total number of vertices in $\mathcal T$ conditioned on extinction. If $q \leq  2^{-n/(n-1)}$, we have for any integer $k \geq 1$,
$$
\dP ( Z \geq k) \leq \frac{ q^{1/n}}{\pi_e}  \left(2 q^{ 1 - 1/n}\right)^k.
$$
\end{corollary}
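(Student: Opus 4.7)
The plan is to apply Lemma \ref{le:totalprogeny} to $\cT$ conditioned on extinction, after sharpening the rough bound from Lemma \ref{le:pie} into a bound of the form $\pi_e \leq q^{1/n}$.

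First I would invoke the classical fact that a $\GW(P)$ tree conditioned on extinction is a $\GW(\tilde P)$ tree whose offspring moment generating function is
$$\tilde\varphi(x) = \frac{\varphi(\pi_e x)}{\pi_e},$$
which follows from the fixed-point equation $\varphi(\pi_e) = \pi_e$ and the branching property. The total progeny $Z$ is then distributed as the total progeny of a $\GW(\tilde P)$ tree, so Lemma \ref{le:totalprogeny} applies directly once one produces some $\rho \geq 1$ with $\tilde\varphi(\rho) \leq \rho$.

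To produce such a $\rho$, I would first establish $\pi_e \leq q^{1/n}$ under the hypothesis. The key estimate is that for $x \in [0,1]$, splitting the series $\varphi(x) = \sum_k P(k) x^k$ according to whether $k < n$ (bound $x^k \leq 1$) or $k \geq n$ (bound $x^k \leq x^n$) gives
$$\varphi(x) \leq q + (1-q)x^n \leq q + x^n.$$
Evaluating at $x = q^{1/n}$ yields $\varphi(q^{1/n}) \leq 2q$, and the hypothesis $q \leq 2^{-n/(n-1)}$ is precisely $2q^{1-1/n} \leq 1$, so $\varphi(q^{1/n}) \leq q^{1/n}$. Continuity of $\varphi$ together with $\varphi(0) \geq 0$ then forces a fixed point in $[0,q^{1/n}]$, and since $\pi_e$ is the smallest fixed point in $[0,1]$ we obtain $\pi_e \leq q^{1/n}$.

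I would then apply Lemma \ref{le:totalprogeny} with the natural choice $\rho = q^{1/n}/\pi_e$, which is $\geq 1$ by the previous step. The hypothesis $\tilde\varphi(\rho) \leq \rho$ amounts to $\varphi(q^{1/n}) \leq q^{1/n}$, which we have just proved, and moreover
$$\frac{\tilde\varphi(\rho)}{\rho} = \frac{\varphi(q^{1/n})}{q^{1/n}} \leq \frac{2q}{q^{1/n}} = 2q^{1-1/n}.$$
Lemma \ref{le:totalprogeny} then delivers
$$\dP(Z \geq k) \leq \rho \left(\frac{\tilde\varphi(\rho)}{\rho}\right)^{\!k} \leq \frac{q^{1/n}}{\pi_e}\bigl(2q^{1-1/n}\bigr)^k,$$
which is the claimed estimate. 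The only real point to get right is the choice $\rho = q^{1/n}/\pi_e$, which is essentially forced: this is the unique scale at which the elementary bound $\varphi(x) \leq q + x^n$ simultaneously yields $\rho \geq 1$ and the prefactor $2q^{1-1/n}$ in the decay rate. I do not anticipate any genuine obstacle beyond checking these numerical inequalities.
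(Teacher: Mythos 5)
Your proof is correct and follows essentially the same strategy as the paper: the same choice $\rho = q^{1/n}/\pi_e$, the same elementary estimate $\varphi(q^{1/n}) \leq 2q$, and the same application of Lemma \ref{le:totalprogeny} to the conditioned-on-extinction tree with offspring generating function $\varphi_e(x) = \varphi(\pi_e x)/\pi_e$. The only cosmetic difference is in how $\rho \geq 1$ is justified --- you show $\pi_e \leq q^{1/n}$ directly by an intermediate-value argument, while the paper notes that convexity of $\varphi_e$ with $\varphi_e'(1) < 1$ forces $\varphi_e(x) > x$ on $[0,1)$, so the inequality $\varphi_e(\rho) \leq \delta\rho \leq \rho$ already rules out $\rho < 1$.
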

\begin{proof}
We have $\varphi( q^{1/n})  \leq q + ( q ^{1/n})^n (1 - q) \leq 2 q \leq q^{1/n} \delta$ with $\delta = 2 q ^{1-1/n}$. By assumption $\delta \leq 1$. Let $\rho = q^{1/n} / \pi_e$. From \eqref{eq:varphie}, we deduce that 
$$
\varphi_e( \rho) = \frac{\varphi( q^{1/n} )  }{\pi_e} \leq \delta \rho.
$$
Note that since $\varphi'_e(1) = \varphi'(\pi_e) < 1$ and $\varphi_e$ convex, we have $\varphi_e(x) > x$ on $[0,1)$. It follows that $\rho \geq 1$. We then conclude by applying  Lemma \ref{le:totalprogeny}.
\end{proof}

We are now ready to prove Lemma \ref{lem:leaves}. 

\begin{proof}[Proof of Lemma \ref{lem:leaves}]
For integer $k \geq 1$, let $\Lambda_k\subset \dR$ be the set of  $\lambda$ such that there exists a tree with $k$ vertices and $\lambda$ is an eigenvalue of the adjacency matrix of this tree. We note that $\cup_k \Lambda_k$ is the set of totally-real algebraic integers by \cite{SALEZ2015249}.  Obviously, $|\Lambda_k|$ is bounded by $k$ times the number of unlabeled trees with $k$ vertices. In particular, for some $C > 1$, 
\begin{equation}\label{eq:ubLk}
| \Lambda_k | \leq C^k,
\end{equation} see Flajolet and Sedgewick \cite[Section VII.5]{MR2483235}. We define $B_{k,\veps} = \{ x \in \dR : \exists \lambda  \in \Lambda_k,  | \lambda  / \sqrt{d_s} - x | \leq \veps 2^{-k-1} / | \Lambda_k| \}$ and $B = \dR \backslash \cup_{k \geq 1} B_{k,\veps}$. By construction, 
$$
\ell (  B^c ) \leq \sum_{k  \geq 1} | \Lambda_k | \frac{ \veps 2^{-k} }{ | \Lambda_k| }= \veps. 
$$ 
Note that if $|\cT|= k$ and $z \in \dH_B$ then $|g_o (z)| \leq  2^{k+1} |\Lambda_k| /\veps$ (since $H = A / \sqrt d_s$ and the Cauchy-Stieltjes transform of a measure is bounded by one over the distance to the support). Let $g^e(z)$ be the law of $g_o(z)$ given $o \notin \mathcal S$. We deduce from  \eqref{eq:ubLk} and Corollary \ref{cor:totalprogeny} with $n=2$ and $q = \pi_1$ that for $z \in \dH_B$, 
\begin{align*}
\dE[ |g^e (z) |^m] & = \sum_{k = 1}^\infty \dE [ |g^e(z)|^m | |\cT| = k ] \dP (|T| = k ) \\
& \leq \sum_{k = 1}^\infty  \frac{ \sqrt \pi_1 }{\pi_e}  \left(2 \sqrt \pi_1 \right)^k \left( \frac{ 2^{k+1} |\Lambda_k|}{ \veps}  \right)^m \\
& \leq \frac{  2^m \sqrt{\pi_1}  }{\pi_e\veps^m }   \sum_{k = 1}^\infty   \left(   2^{m+1} C^m   \sqrt{\pi_1}  \right)^k  \\
& \leq \frac{2^{2m+2} C^m }{\pi_e\veps^m } \, \pi_1 ,
\end{align*}
where at the last line, we have assumed that $\sqrt \pi_1 \leq 2^{-m-2} C^{-m}$ and used that $\sum_{k \geq 1} a^k \leq 2a$ if $0 \leq a \leq 1/2$. Note that this last condition is satisfied if $\pi_1 \leq c_0^m$ with $1/ c_0 = 2^{4} C^{2}$.  We deduce that for some new constant $C >1$, we have for any integer $m \geq 1$ such that $\pi_1 \leq c_0^m$,
\begin{equation*}\label{eq:momentge}
\dE[ |g^e (z) |^m] \leq \left( \frac{C }{\veps} \right)^m \frac{\pi_1}{\pi_e}.
\end{equation*}
In particular, from \eqref{eq:defV} and H\"older inequality, 
\begin{align*}
\dE  \left[ |v(z)|^m \right] & = \frac{1}{d_s^m} \dE   \left| \sum_{i=1}^{N^e} g^e_i (z)   \right|^m   \leq \frac{1}{d_s^m} \dE \left[(N^e)^{m-1}\sum_{i=1}^{N^e} | g^e_i (z)|^m \right]   \leq  \dE [ (N^e)^m ]\left( \frac{C }{  \veps  d_s } \right)^m \frac{\pi_1}{\pi_e}.
\end{align*}
It remains to estimate $\dE [ (N_e )^m ]$. We write  
$$\dE [(\bar N^e)^m] = \dE [ N_e ^m | N_s \geq 1] \leq \dE [ (\bar N^e) ^m ] / ( 1- \pi_e ) \leq 2 \dE [(\bar N^e)^m],$$ indeed, from Lemma \ref{le:pie}, $\pi_e \leq 2 \pi_1 \leq 2 c_0 \leq 1/2$. We have seen that $\bar N^e \stackrel{d}{=} \sum_{i=1}^N \veps_i$ where $N$ has distribution $P$, independent of $(\veps_i)_{i \geq 1}$ an iid sequence of Bernoulli variables with  $\dP ( \veps_i  = 1) = \pi_e = 1 - \dP ( \veps_i = 0)$.   We find
$$
\dE [(\bar N^e)^m] = \dE \left( \sum_{i=1}^{N} \veps_i   \right)^m \leq \sum_{k=1}^m \dE[ N^k]  \pi^k _e \leq \frac{\dE [ N^m ]}{d^m}  \sum_{k=1}^m (d \pi_e)^k  \leq 2 \frac{\dE [ N^m ]}{d^m}  d \pi_e ,
$$
where, in the first sum, we have used that  $\dE \veps_i^l = \pi_e$ for any $l \geq 1$ and decomposed the sum over the number $k$ of distinct indices in a $m$-tuple. At the next step, we have used that $\dE N^k / d^k$ is non-decreasing in $k$ from Jensen inequality. In the final inequality, we used  Lemma \ref{le:pie} and the assumption $d \pi_e \leq \dE N^2 \pi_e  / d \leq 1/2$.  We thus have proved that 
$$
\dE  \left[ |v(z)|^m \right]  \leq 4 \left( \frac{C }{  \veps  d_s } \right)^m  \frac{\dE [ N^m ]}{d^m}  d \pi_1. 
$$
Finally, from \eqref{eq:varphis} and the convexity of $\varphi'$, we have 
$$
d_s = \partial_1 \varphi_s (1,1) = \varphi'(1 - \pi_e) \geq  \varphi'(1) - \pi_e \varphi''(1) = d - \pi_e \dE [ N(N-1)] \geq d /2.
$$
The claim of the lemma for $v(z)$ follows by adjusting the constant $C$. Replacing $N_\star$ by $N$, the claim on $v_\star(z)$ follows. \end{proof}

\section{From resolvent to spectral measures}
\label{sec:specmeas}

\subsection{Absolutely continuous part of a random measure}

In this final section, we prove the main results in introduction, Theorem \ref{thm:POI}, Theorem \ref{thm:POIcore} and Theorem \ref{thm:Anderson}. This is done by standard tools connecting the Cauchy-Stieltjes transform of a measure and its absolutely continuous part.

We will use the following lemma. Recall the definition of $\dH_B$ in \eqref{eq:defHB}.

\begin{lemma}\label{lem:specmeas1}
Let $\mu_1$ and $\mu_2$ be two random probability measures on $\dR$ with Cauchy-Stieltjes transforms $g_1$ and $g_2$ respectively. Assume that for some deterministic Borel set $B\subset \dR$, $C >0$ and $p > 0$ we have 
$$
\liminf_{\eta \downarrow 0} \int_B \dE [ | g_1 (\lambda + i \eta) - g_2(\lambda + i \eta ) |^p ] \mathrm{d}\lambda \leq C.
$$
Then, if $f_1$ and $f_2$ are the densities of the absolutely continuous parts of $\mu_1$ and $\mu_2$, we have
$$
\dE \int_B | f_1(\lambda) - f_2 (\lambda) |^p \mathrm{d}\lambda \leq \frac{C}{\pi^p}.
$$
\end{lemma}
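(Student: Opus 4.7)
The plan is to combine the pointwise Stieltjes inversion formula with a double application of Fatou's lemma. First I would recall the classical fact: for any finite (possibly signed or complex) Borel measure $\mu$ on $\dR$ with Cauchy-Stieltjes transform $g(z)$, the Lebesgue differentiation of the Poisson representation gives
$$
\lim_{\eta \downarrow 0} \frac{1}{\pi}\, \Im g(\lambda + i\eta) = f(\lambda) \qquad \text{for Lebesgue-a.e. } \lambda \in \dR,
$$
where $f$ is the density of the absolutely continuous part of $\mu$. Applied pathwise to $\mu_1$ and $\mu_2$, this gives, almost surely, pointwise-a.e. convergence
$$
\frac{1}{\pi}\bigl|\Im g_1(\lambda + i\eta) - \Im g_2(\lambda + i\eta)\bigr| \xrightarrow[\eta \downarrow 0]{} |f_1(\lambda) - f_2(\lambda)|.
$$

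The next step is to bound pathwise, using Fatou's lemma on the interval $B$ and the elementary inequality $|\Im w| \leq |w|$:
$$
\int_B |f_1(\lambda) - f_2(\lambda)|^p\, \mathrm{d}\lambda \;\leq\; \liminf_{\eta \downarrow 0} \frac{1}{\pi^p} \int_B \bigl|\Im g_1(\lambda+i\eta) - \Im g_2(\lambda+i\eta)\bigr|^p\, \mathrm{d}\lambda \;\leq\; \liminf_{\eta \downarrow 0} \frac{1}{\pi^p} \int_B \bigl|g_1(\lambda+i\eta) - g_2(\lambda+i\eta)\bigr|^p\, \mathrm{d}\lambda.
$$
Taking expectation and applying Fatou's lemma once more to interchange $\dE$ and $\liminf_{\eta \downarrow 0}$, followed by Fubini (the integrand is non-negative) to interchange $\dE$ and $\int_B$, yields
$$
\dE \int_B |f_1(\lambda) - f_2(\lambda)|^p\, \mathrm{d}\lambda \;\leq\; \liminf_{\eta \downarrow 0} \frac{1}{\pi^p} \int_B \dE \bigl[\,|g_1(\lambda+i\eta) - g_2(\lambda+i\eta)|^p\,\bigr]\, \mathrm{d}\lambda \;\leq\; \frac{C}{\pi^p},
$$
which is the claim.

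There is really no hard step here; the only care is to invoke the correct a.e.\ Stieltjes inversion statement (which holds for arbitrary finite Borel measures on $\dR$, with the limit recovering precisely the Radon-Nikodym density of the absolutely continuous part) and to make sure the two applications of Fatou's lemma are correctly ordered. Measurability of $\omega \mapsto \int_B |f_1(\lambda) - f_2(\lambda)|^p\, \mathrm{d}\lambda$ is not an issue since $f_j$ can be recovered as a pointwise limit of the measurable family $\Im g_j(\lambda + i/n)/\pi$ and the integrand is non-negative.
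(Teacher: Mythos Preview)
Your proof is correct and follows essentially the same approach as the paper: pointwise Stieltjes inversion $\Im g_j(\lambda+i\eta)/\pi \to f_j(\lambda)$ a.e., together with Fatou's lemma. The only cosmetic difference is that the paper packages both integrations into a single product measure $M = \dP \otimes \ell$ on $\Omega \times B$ and applies Fatou once, whereas you apply Fatou twice in succession (first in $\lambda$, then in $\omega$); these are equivalent.
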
 
\begin{proof} For $\eta >0$ and real $\lambda$, we set $f^\eta_i (\lambda) = \Im g_i(\lambda + i \eta) / \pi$. For $i =1,2$, almost everywhere $\lim_{\eta \to 0} f^\eta_i (\lambda) = f_i (\lambda)$, see for example Simon \cite[Chapter 11]{MR2154153}. We consider the measure on $\dC \times B$: $M = \dP \otimes \ell(\cdot)$. By assumption, from Fubini's Theorem, for any $\eta >0$, 
$$
\liminf_{\eta \downarrow 0} \int |f^\eta_1 - f^\eta_2|^p \mathrm{d}M =  \liminf_{\eta \downarrow 0}   \int_B \dE [ | f^\eta_1(\lambda) - f^\eta_2 (\lambda) |^p ] \mathrm{d}\lambda \leq \frac{C}{\pi^p}.
$$
Moreover, from what precedes, $M$-almost everywhere, $|f^\eta_1 - f^\eta_2|^p$ converges to $|f_1 - f_2|^p$. The conclusion is then a consequence of Fatou's lemma.  \end{proof}
 
The following theorem due to Simon \cite[Theorem 2.1]{simon1995L}  generalizes the case $p=2$ due to Klein \cite[Theorem 4.1]{klein1998extended} (see also (4.46) there). It is a useful criterion for a random probability measure to be purely absolutely continuous. 

\begin{lemma}[\cite{simon1995L,klein1998extended}]\label{lem:specmeas2}
Let $p > 1$ and $\mu$ be a random probability measure on $\dR$ with Cauchy-Stieltjes transform $g$. For any $E >0$, if 
$$
\liminf_{\eta \downarrow 0} \int_{-E}^E \dE [ |g(\lambda + i \eta)|^p ] \mathrm{d} \lambda < \infty
$$
then, with probability one, $\mu$ is absolutely continuous on $(-E,E)$. 
\end{lemma}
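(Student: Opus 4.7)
The proof strategy is the classical weak $L^p$ compactness argument for identifying the absolutely continuous part from the boundary behavior of the Cauchy-Stieltjes transform.

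First I would pass from the hypothesis to an almost sure statement. Fix a sequence $\eta_n \downarrow 0$ realizing the $\liminf$, so that $\dE \int_{-E}^E |g(\lambda+i\eta_n)|^p\, \mathrm{d}\lambda$ is bounded by some constant $M$ (using Fubini). Fatou's lemma applied to the nonnegative random variables $\int_{-E}^E |g(\lambda+i\eta_n)|^p\,\mathrm{d}\lambda$ then gives
\[
\dE\!\left[\liminf_{n\to\infty} \int_{-E}^E |g(\lambda+i\eta_n)|^p\,\mathrm{d}\lambda\right] \leq M,
\]
so almost surely there exists a (random) subsequence $\eta_{n_k}\to 0$ along which $\sup_k \|g(\cdot+i\eta_{n_k})\|_{L^p(-E,E)} < \infty$. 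In particular, since $|\Im g|\leq |g|$, the Poisson approximants $f^{\eta_{n_k}}(\lambda) := \Im g(\lambda+i\eta_{n_k})/\pi$ form a bounded sequence in $L^p((-E,E))$.

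Next I would invoke reflexivity of $L^p$ for $p>1$: by the Banach--Alaoglu theorem, along a further subsequence (still denoted $\eta_{n_k}$), $f^{\eta_{n_k}}$ converges weakly in $L^p((-E,E))$ to some $h\in L^p((-E,E))$. The key identification step is to show that $h(\lambda)\,\mathrm{d}\lambda$ coincides with the restriction of $\mu$ to $(-E,E)$; this forces $\mu$ to be absolutely continuous on $(-E,E)$ with density $h$.

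To identify the weak limit, I would test against an arbitrary continuous function $\phi$ with compact support in $(-E,E)$. On the one hand, weak $L^p$ convergence gives $\int \phi\, f^{\eta_{n_k}}\,\mathrm{d}\lambda \to \int \phi\, h\,\mathrm{d}\lambda$. On the other hand, $f^{\eta}$ is the Poisson convolution $P_\eta *\mu$, so by the self-adjointness of convolution with the Poisson kernel and the fact that $P_\eta *\phi \to \phi$ uniformly as $\eta\downarrow 0$,
\[
\int \phi(\lambda)\, f^{\eta_{n_k}}(\lambda)\,\mathrm{d}\lambda = \int (P_{\eta_{n_k}} *\phi)(\lambda)\,\mathrm{d}\mu(\lambda) \longrightarrow \int \phi\,\mathrm{d}\mu.
\]
Equating the two limits and letting $\phi$ range over all such test functions yields $\mu\big|_{(-E,E)} = h(\lambda)\,\mathrm{d}\lambda$, as desired.

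The main obstacle, and the only step requiring care, is the identification of the weak limit in the last paragraph: we must make sure the subsequence can be chosen on a single almost sure event (clear, since weak compactness applies pathwise once $L^p$-boundedness is available almost surely) and that no mass of $\mu$ escapes to the boundary of $(-E,E)$ when testing with compactly supported $\phi$ (handled by the freedom to take $\phi$ approximating indicators of compact subintervals $[-E+\delta,E-\delta]$ and then letting $\delta\downarrow 0$). Everything else is a routine application of Fubini, Fatou, and Banach--Alaoglu.
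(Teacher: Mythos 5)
Your proof is correct. Note that the paper does not prove this lemma; it cites it from Simon \cite{simon1995L} (general $p$) and Klein \cite{klein1998extended} ($p=2$), and your argument is the standard one found there: Fubini and Fatou to get a pathwise bounded $L^p$ sequence of Poisson approximants, weak sequential compactness of $L^p$ for $p>1$ to extract a weak limit $h$, and identification of $h\,\mathrm{d}\lambda$ with $\mu|_{(-E,E)}$ by testing against $C_c((-E,E))$ via the symmetry of the Poisson kernel. Two small remarks: what you invoke is weak sequential compactness of bounded sets in the reflexive space $L^p((-E,E))$ (Banach--Alaoglu alone gives weak-$*$ compactness, not sequential; for separable reflexive $L^p$ this is of course fine), and the final ``escape to the boundary'' worry is a non-issue since the claim is absolute continuity on the \emph{open} interval, which is exactly what duality against $C_c((-E,E))$ determines.
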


 \subsection{Proof of Theorem \ref{thm:Anderson}}

\noindent {\em Step 1: absolute continuity. } Let $H = A / \sqrt {d-1} + \lambda V/\sqrt{d-1}$ and $g_o (z) = \langle \delta_o, (H-z)^{-1} \delta_o\rangle$. By construction $g_o$ is the Cauchy-Stieltjes transform of $\tilde \mu_o  = \mu_o ( \cdot / \sqrt{d-1}  )$. We apply Corollary \ref{cor:main} with $N_\star = d$, $N = d-1$, independent of $v_\star  \stackrel{d}{=} v \stackrel{d}{=}  \lambda V_o / \sqrt{d-1}$. With $\rho = d/(d-1) \geq 1$ and $\alpha$, $\beta$ as in Corollary \ref{cor:main} with $p=2$, we find
 $$
 \alpha(t) = 4 \dE \left[ \IND_{ |v| > t } ( 1+ |v|)^4 \right]  \quad \hbox{ and } \quad \beta(t) = 4 \dE \left[ \IND_{ |v| > t \rho } \left( 1+ \frac{|v|}{\rho} \right)^4  \right].
 $$
Note that $\beta(t)\leq \alpha(t)$. Also, by assumption $\dE |V_o|^4 < \infty$. Since $(a+b)^n \leq 2^{n-1} (|a|^n + |b| ^n)$,
$$
\alpha(t) \leq 2^6 \left( \dP \left( |V_o| > \frac{t \sqrt{d-1} }{\lambda}\right)  + \frac{  \lambda^4}{(d-1)^2} \dE \left[ \IND_{ |V_o| > \frac{t \sqrt{d-1} }{\lambda}} |V_o|^4\right] \right) \leq 2^6 \left( 1+ \frac{1}{t^4} \right) \frac{\lambda^4}{(d-1)^2} \dE  |V_o|^4.
$$
It follows that for each $t,\kappa>0$, there exists $\lambda_1 >0$ such that $\lambda \leq \lambda_1 \sqrt{d}$ implies $\alpha(t),\beta(t) \leq \kappa$. Also $1 \leq \rho \leq 3/2$ (since $d \geq 3$, $d-1 \geq (2/3) d$). Now, we fix $E >0$, we apply this observation to $t = \delta/2$ and $\kappa = \delta/2$ with $\delta$ as in Corollary \ref{cor:main}. We deduce that for $\lambda \leq \lambda_1 \sqrt d$, we have $\dE [|g_o(z)|^2] \leq C$ for some $C$. From Lemma \ref{lem:specmeas2}, we deduce that with probability one, $\mu_o$ is absolutely continuous on $(-E\sqrt{d-1} , E \sqrt {d-1})$.

\noindent {\em Step 2: positive density. } We now prove that $\mu_0$ has positive density on $(-E\sqrt{d-1} , E \sqrt {d-1})$ or equivalently that $\tilde \mu_0$ has positive density on $(-E,E)$. Corollary \ref{cor:main} implies that, for some $C>0$, for all $z \in \dH_E$, 
$$ \dE (\Im g_o (z))^{-2} \leq C .$$
Let $f$ be the density of the absolutely continuous part of $\tilde \mu_o$. Almost-everywhere, we have $\lim_{\eta \to 0} \Im g_o(\lambda + i \eta)  =  f (\lambda)/\pi$. We deduce from Fatou's Lemma that
$$
\dE \int_{-E}^E \frac{1}{f(\lambda)^2 } \mathrm{d}\lambda < \infty.
$$
In particular, with probability one, $\int_{-E}^E  1/ f(\lambda)^2  \mathrm{d}\lambda < \infty$ and $f(\lambda) >0$ almost everywhere.

\noindent {\em Step 3: mass of absolutely continuous part. } It remains to prove that for any $\veps > 0$, there exists $\lambda_0$ such that  $\dE \mu_o^{\ac} (\dR) \geq 1 - \veps$. There exists $E < 2$ such that $\mu_{\star} ( (-E, E) ) \geq 1 - \veps / 2$ uniformly in $d \geq 3$. As explained above, for each $t,\kappa >0$, there exists $\lambda_1 >0$ such that $\lambda \leq \lambda_1 \sqrt{d}$ implies $\alpha(t),\beta(t) \leq \kappa$. By adjusting the value of $t = \kappa$, by Lemma \ref{lem:specmeas1}, we deduce that if $\lambda \leq \lambda_0 (\veps)\sqrt{d}$, we have 
$$
\dE \int_{-E}^E |f(\lambda) - f_{\star} (\lambda) | d \lambda \geq \frac{\veps}{2},
$$ 
where $f$ is the density of the absolutely continuous part of $\tilde \mu_o$ and $f_\star$ the density of $\mu_\star$. 
Therefore, from the triangle inequality, we find
$$
\dE \mu_o^{\ac} (\dR) \geq \dE \int_{-E}^E f(\lambda) d \lambda \geq \int_{-E}^E f_\star(\lambda) d\lambda - \frac{\veps}{2} \geq 1 - \veps,
$$
as requested.
 \qed

  \subsection{Proof of Theorem \ref{thm:POIcore}}
We treat the case of $P_\star = Q_d$ is $\mathrm{Bin}(n,d/n)$ conditioned on being at least $2$. The Poisson case is essentially the same (take $n$ to infinity in the argument below). Let $N_\star$ be a random variable with law $Q_d$ and $N$ with law $P = \widehat{Q}_d$.
The moment generating function of $P_\star$ is 
$$
\varphi_\star(x) = \dE [ x^{N_\star} ] =  \left( \left(1 + \frac d n (x - 1) \right)^n - q_0 - q_1 x \right) / ( 1 - q_0 - q_1),
$$     
where $q_0 = (1 - d/n)^n$ and $q_1 = d (1 - d/n)^{n-1}$ are the probabilities that $\Bin(n,d/n)$ is $0$ and $1$. In particular, the moment generating function of $P$ is 
$$
\varphi(x) = \dE [ x^{N} ]  = \frac{\varphi_\star'(x) }{\varphi_\star' (1) }= \left( \left(1 + \frac d n (x - 1) \right)^{n-1}  - p_0 x \right) / ( 1- p_0),
$$
  where $p_0 = q_1 / d = ( 1- d/n)^{n-1}$. Therefore, $P$ is $\Bin(n-1,d/n)$ conditioned on being at least $1$.

  It is straightforward to check that $q_0,q_1$ and $p_0$ converge to $0$ as $d$ goes to infinity, uniformly in $n \geq d$. More is true: from Bennett's inequality, if $Z = Z_n \stackrel{d}{=} \Bin(n,d/n)$, for any $\lambda > 0$,
  \begin{equation}\label{eq:bennett}
  \dP ( | Z - d | \geq \lambda d)  \leq 2 \exp ( - d h ( \lambda ) ),  
  \end{equation}
with $h(u) = ( 1+ u) \ln (1+ u) - u$. We deduce that for $m \geq 1$,  
$$
\left| \frac{\dE Z^m}{d^m} - 1 \right| \leq  \dE \left| \left( \frac{Z }{ d} \right)^m  - 1 \right|  \leq  m \int_0 ^\infty   d^{m-1} \dP ( | Z/d - 1 | \geq \lambda) \mathrm{d}\lambda  
$$
goes to $0$ as $d$ goes to infinity, uniformly in $n \geq d$. As a consequence, since $\dE Z_{n-1}^m \leq \dE N^m  \leq \dE Z_{n-1}^m / ( 1 -p_0)$ and $\dE Z_{n}^m \leq \dE N_\star^m  \leq \dE Z_{n}^m / ( 1 -q_0 - q_1)$,  for any integer $m \geq 1$, $\dE (N / d) ^m $ and $\dE (N_\star / d)^m$ go to $1$ as $d$ goes to infinity, uniformly in $n \geq d$.

Similarly, from \eqref{eq:bennett}, we write for $0 < \lambda < 1$,
$$
\dE \left(\frac{d}{\max(Z,1) }\right)^m \leq d^m \dP ( Z \leq ( 1- \lambda) d)  + (1- \lambda)^m \leq 2 d^m e^{-d h (\lambda)}  + (1- \lambda)^m. 
$$
We deduce that $\dE (d/ N) ^m $ and $\dE (d/ N_\star )^m$ go to $1$ as $d$ goes to infinity, uniformly in $n \geq d$.

The rest of the proof is the same than the proof of Theorem \ref{thm:Anderson}: we apply Corollary \ref{cor:main} with $p=2$ to $H = A / \sqrt{\dE N}$. The above analysis shows that, for any $\lambda >0$, $\alpha_2(\lambda)$ and $\beta_2(\lambda)$ goes to $0$ as $d$ goes to infinity, uniformly in $n\geq d$. \qed

  \subsection{Proof of Theorem \ref{thm:POI}}
Again, we only treat the binomial case, the Poisson case being essentially the same. Let $\pi_e$ be the extinction probability. Let $N^s_\star$ and $N^s$ be the offspring distribution of the root and $v \ne o$ conditioned on being in $\mathcal S$.  The moment generating functions of $N^s_\star$ and $N^s$ are given by \eqref{eq:defMGFs}. We set $d_s = \dE N^s$. Arguing as in the proof of Theorem \ref{thm:POIcore}, we find that $N^s_\star$ has law $\Bin(n,(1-\pi_e)d / n)$ conditioned on being at least $1$ and that $N^s$ has law $\Bin(n-1,(1-\pi_e)d / n)$ conditioned on being at least $1$. 
By Lemma \ref{le:pie}, $\pi_e$ goes to $0$ as $d$ goes to infinity, uniformly in $n \geq d$. Also, from the proof of Theorem \ref{thm:POIcore}, we deduce that, for any integer $m \geq 1$, 
$$
\dE\left( \frac{ N^s}{d} \right)^m  , \; \dE\left( \frac{d}{ N^s} \right)^m , \; \dE\left( \frac{ N_\star^s}{d} \right)^m  , \; \dE\left( \frac{d}{ N_\star^s} \right)^m  
$$
go to $1$ as $d$ goes to infinity, uniformly in $n \geq d$. Also, if $d \geq 2$, from Bennett's inequality \eqref{eq:bennett},
$$
 \pi_1 =  \dP ( N \leq 1) \leq \dP ( N \leq d/2) \leq 2 \exp ( - d h (1/2) ) = o(1/d).
$$

Now, let $\veps_0 >0$, $B = B(\veps_0)$ be as in Lemma \ref{lem:leaves}. If $v(z)$, $v_\star(z)$ are as in Lemma \ref{lem:RDEleaves}, we deduce from what precedes that, by Lemma \ref{lem:leaves}, $\dE |v(z)|^m$ and $\dE |v_\star (z) |^m $ go to $0$ as $d$ goes to infinity, uniformly in $n \geq d$ and $z \in \dH_B  $.

We set $H  = A / \sqrt{d_s}$ and let $0 < E < 2$. We denote by $\dE^s$ the conditional expectation given $\cT$ infinite.
By Lemma \ref{lem:RDEleaves}, Corollary \ref{cor:main} and the above analysis with $m = 12$ (see \eqref{eq:holder}), we deduce that for any $\kappa >0$, for all $d \geq d_0 (\veps_0,E, \kappa)$ large enough and for all $z \in \dH_E \cap \dH_B $, 
$$
\dE^s |g_o (z) - \Gamma_\star (z) |^2 \leq \kappa/\rho^2 \quad \dE^s (\Im g_o (z))^{-2} \leq C_\star \left(\rho+\frac{1}{\rho}\right)^2,
$$
where $g_o (z) = \langle \delta_o , (H-z)^{-1} \delta_o \rangle $ and $\rho  = \dE N^s_\star / \dE N^s$ can be taken to be approximately close to $1$ for $d$ large enough.

Let $\tilde \mu_o = \mu_o ( \cdot / \sqrt{d_s})$ and let $f(\lambda)$ be the density of the absolutely continuous part of $\tilde \mu_o$. We fix $\kappa >0$ and $d \geq d_0 ( \veps_0,\kappa)$. The argument in step 2 of the proof of Theorem \ref{thm:Anderson} proves that
$$
\dE ^s\int_{B \cap (-E,E)} \frac{1}{f(\lambda)^2 } < \infty.
$$
In particular, conditioned on non-extinction, with probability one, $f(\lambda) >0$ almost-everywhere on $B\cap (-E,E)$. This proves the first claim of the theorem.

For the second claim, we fix $\veps >0$. We have $\dE \mu^{\ac}(\dR) \geq (1- \pi_e) \dE^s \mu^{\ac}(\dR)$ and $\pi_e$ goes to $0$ as $d$ goes to infinity, uniformly in $n \geq 2$. Hence, up to adjusting the value of $\veps$, it suffices to prove that $\dE^s \mu^{\ac}(\dR) \geq 1- \veps$. Moreover, since $\ell (B^c) \leq \veps_0$, there exists $(\veps_0,E)$ such that for all $d$ large enough,
$ \mu_\star ( B \cap (-E,E) ) \geq  1 - \veps/2$. We may then repeat step 3 in the proof of Theorem \ref{thm:Anderson} and conclude that $\dE^s \int_{B \cap (-E,E)} f(\lambda) d\lambda \geq 1 - \veps$ for all $d$ large enough, uniformly in $n$. \qed
 
\bibliographystyle{abbrv}
\bibliography{biblio}

\end{document}